\newcommand{\R}{{\mathbb{R}}}
\newcommand{\N}{{\mathbb{N}}}
\newcommand{\<}{\langle}
\renewcommand{\>}{\rangle}
\newcommand{\abs}[1]{\left\vert#1\right\vert}
\newcommand{\xii}{{\abs{\xi}}}
\def\<#1\>{\left\langle#1\right\rangle }
\renewcommand{\doteq}{{\,\mathrm{:=}\,}}
\newcommand{\eps}{\varepsilon}
\newcommand{\sinc}{{\mathrm{sinc}}}
\newcommand{\Str}{{0}}%{{\mathrm{Str}}}
\newcommand{\crit}{{\mathrm{crit}}}
\newcommand{\conf}{{\mathrm{conf}}}
\theoremstyle{plain}
\newtheorem{theorem}{Theorem}
\newtheorem{proposition}{Proposition}
\newtheorem{lemma}{Lemma}
\newtheorem{corollary}{Corollary}
\theoremstyle{definition}
\newtheorem{Def}{Definition}
\theoremstyle{remark}
\newtheorem{remark}{Remark}[section]
\title%[The semilinear wave with critical dissipation]{%$L^p-L^q$ estimates for t
[The semilinear Euler-Poisson-Darboux equation]{The semilinear Euler-Poisson-Darboux equation: \\ a case of wave with critical dissipation}
\author{Marcello D'Abbicco}
\begin{document}

\begin{abstract}
In this paper we study the existence of global-in-time energy solutions to the Cauchy problem for the Euler-Poisson-Darboux equation, with a power nonlinearity:
\[ u_{tt}-u_{xx} + \frac\mu{t}\,u_t = |u|^p \,, \quad t>t_0, \ x\in\R\,.\]
Here either~$t_0=0$ (singular problem) or~$t_0>0$ (regular problem). This model represents a wave equation with critical dissipation, in the sense that the possibility to have global small data solutions depend not only on the power~$p$, but also on the parameter~$\mu$. We prove that, assuming small initial data in~$L^1$ and in the energy space, global-in-time energy solutions exist for~$p>p_\crit =\max\{p_\Str(1+\mu),3\}$, for any~$\mu>0$, where~$p_\Str(k)$ is the critical exponent for the semilinear wave equation without dissipation in space dimension~$k$, conjectured by W.A. Strauss, and~$3$ is the critical exponent obtained by H. Fujita for semilinear heat equations. We also collect some global-in-time existence result of small data solutions for the multidimensional EPD equation
\[ u_{tt}-\Delta u + \frac\mu{t}\,u_t = |u|^p \,, \quad t>t_0, \ x\in\R^n\,,\]
with powers~$p$ greater than Fujita exponent and sufficiently large~$\mu$.
\end{abstract}

\subjclass[2010]{35L15, 35L71, 35Q05}

\keywords{semilinear wave equations, semilinear Euler-Poisson-Darboux equation, global existence, critical dissipation, damped waves, critical exponent, Fujita exponent, Strauss exponent}

\maketitle

\section{Introduction}\label{sec:Main}

In this paper, we study global-in-time existence of small data solutions to the Cauchy problem for the Euler-Poisson-Darboux equation with a power nonlinearity:
\begin{equation}\label{eq:EPD}
u_{tt}-\triangle u+ \dfrac\mu{t}\, u_t= f(u);\\
\end{equation}
here~$\mu>0$ and~$f(u)=|u|^p$ or, more in general, $f$ is locally Lipschitz-continuous and
\begin{equation}\label{eq:fu}
f(0)=0, \quad |f(u)-f(w)|\leq C\,|u-w|\big(|u|^{p-1}+|w|^{p-1}\big)\,,
\end{equation}
for some~$p>1$. The importance of this semilinear model is that \emph{it represents a bridge} across the rift that lies between pure semilinear wave models ($\mu=0$) and semilinear wave models whose asymptotic profile is described by a diffusive model (sufficiently large~$\mu$). The transition from one model to the other is described by how the critical exponent changes as the dissipation parameter~$\mu$ enlarges, up to some threshold.

In this paper, we consider both the singular Cauchy problem
\begin{equation}\label{eq:CPsing}
\begin{cases}
u_{tt}-\triangle u+ \dfrac\mu{t}\, u_t= f(u), & t>0, \ x\in\R^n\,,\\
u(0,x)=u_0(x)\,, \quad u_t(0,x)=0\,,
\end{cases}
\end{equation}
and the regular Cauchy problem
\begin{equation}\label{eq:CP}
\begin{cases}
u_{tt}-\triangle u+ \dfrac\mu{t}\, u_t= f(u), & t\geq t_0>0, \ x\in\R^n\,,\\
u(t_0,x)=0\,, \quad u_t(t_0,x)=u_1(x)\,.
\end{cases}
\end{equation}
The study of the solution to the singular linear Cauchy problem, i.e., $f=0$ in~\eqref{eq:CPsing}, goes back to the first investigations of Euler~\cite{Euler}, Poisson~\cite{Poisson} and Darboux~\cite{Darboux} in space dimension~$n=1$, later extended to the multidimensional case~$n\geq2$ by A. Weinstein~\cite{Wein} and other authors, see, in particular, \cite{DW53} and the references therein. The study of the solution to the regular linear Cauchy problem, i.e., $f=0$ in~\eqref{eq:CP}, goes back to~\cite{EPDroyal, Davis56}. The study of the singular Cauchy problem for the EPD equation with inhomogeneous term~$f=f(t,x)$ goes back to~\cite{Y67}, whereas global-in-time existence results for small data solutions to the semilinear problem with some class of nonlinearities~$f=f(t,x,v)$ have been recently obtained in~\cite{Zhang18} for~$\mu\in(-1,0)$ (see~\cite{Uesaka94} for the case of absorbing nonlinearity~$f(u)=-v^3$ in space dimension~$n=3$). For additional references and for applications of EPD equations to gas dynamics, hydrodynamics, mechanics, elasticity and plasticity and so on, we address the reader to~\cite{SS17}.

\subsection{The criticality of the dissipative term}

The term~$\mu t^{-1}u_t$ in~\eqref{eq:CPsing} and~\eqref{eq:CP} represents a \emph{critical dissipation} acting on the wave model. It is critical in the sense that its scaling makes relevant the size of the parameter~$\mu$ in describing the transition from a wave model to a heat model. For instance, according to the size of~$\mu$ with respect to the space dimension~$n$, the energy estimates for the solution to the linear problem
\begin{equation}\label{eq:CPlinear}
\begin{cases}
v_{tt}-\triangle v+ \dfrac\mu{t}\, v_t= 0, & t\geq s>0, \ x\in\R^n\,,\\
v(s,x)=0\,, \quad v_t(s,x)=v_1(x)\,,
\end{cases}
\end{equation}
have different decay profiles. In particular~\cite{W04}, %\cite[Theorem 3.4]{W04}),
\begin{equation}\label{eq:energyest2}
E(t) = \frac12\,\|v_t(t,\cdot)\|_{L^2}^2 + \frac12\,\|\nabla v(t,\cdot)\|_{L^2}^2 \leq C\,t^{-\min\{\mu,2\}}\|v_1\|_{L^2}^2,
\end{equation}
where~$C=C(s)>0$ for energy solutions to~\eqref{eq:CPlinear}. In this manuscript, we call energy solution any (weak) solution in the space~$\mathcal C([0,\infty),H^1)\cap\mathcal C^1([0,\infty),L^2)$, so that the energy functional is well-defined and continuous.

 %, and, more in general~\cite{W05}, %[Theorem 2.9]{W05},
%%
%\[ \|\partial_t^k\partial_x^\alpha v(t,\cdot)\|\leq C\,t^{-\min\left\{\frac\mu2,k+|\alpha|\right\}}\,\|v_1\|_{H^{k+|\alpha|-1}},\quad k+|\alpha|\geq1.\]
%%
%We stress that the dependence of the decay profiles from the size of the parameter~$\mu$ is a phenomenon which depends on the fact that problem~\eqref{eq:CPlinear} is nonsingular, i.e., $s>0$. For the singular problem, the situation is pretty different, see Section~\ref{sec:sing}.
The case of sufficiently large~$\mu$ in~\eqref{eq:CPlinear} %(possibly, with respect to the space dimension~$n$)
is analogous to the case of so-called \emph{effective dissipation} \cite{W07}, in which the asymptotic profile of a wave equation with time-dependent dissipation
\begin{equation}\label{eq:CPblinear}
\begin{cases}
v_{tt}-\triangle v+ b(t)\, v_t= 0, & t\geq s, \ x\in\R^n\,,\\
v(s,x)=0\,, \quad v_t(s,x)=v_1(x)\,,
\end{cases}
\end{equation}
is described by the solution to the corresponding heat equation~$-\triangle v+ b(t)\, v_t= 0$, namely,
\[ v(t,x)= v(s,x)\,(4\pi B(t,s))^{-\frac{n}2}\,e^{-\frac{|x|^2}{4B(t,s)}}\,, \quad\text{where}\ B(t,s)=\int_s^t \frac1{b(\tau)}\,d\tau. \]
For the case~$b=1$, this ``diffusion phenomenon'' has been widely investigated in~\cite{HM, HT, MN03, N03}.

\subsection{The influence of the dissipation on the critical exponent}

For a class of coefficients~$b(t)$ as the one considered in~\cite{W07} (in particular, $b(t)=\mu t^\kappa$, with~$\mu>0$ and~$\kappa\in(-1,1)$), the diffusion phenomenon reflects on the fact that the critical exponent~$p_\crit$ for the corresponding semilinear problem
\begin{equation}\label{eq:CPb}
\begin{cases}
u_{tt}-\triangle u+ b(t)\, u_t= |u|^p, & t\geq s, \ x\in\R^n\,,\\
u(s,x)=0\,, \quad u_t(s,x)=u_1(x)\,,
\end{cases}
\end{equation}
is Fujita exponent~$1+2/n$ (see~\cite{DAL13, DALR13, LNZ12, N11, W17}), the same critical exponent appearing for the semilinear heat equation. In the case~$b=1$, the fact that the critical exponent was Fujita exponent~$1+2/n$ was proved by G. Todorova and B. Yordanov~\cite{TY01} in any space dimension~$n\geq1$ (see~\cite{Z} for the blow-up in the critical case), see also~\cite{IMN04, N04}. A previous existence result in space dimension~$n=1,2$ was proved by A. Matsumura~\cite{Matsu}.

By critical exponent~$p_\crit$ we mean that global-in-time small data solutions exist for~$p>p_\crit$ and do not exist for~$p\in(1,p_\crit]$, under suitable data sign assumptions. The study of these kind of problems has been originated by the pioneering paper of H. Fujita~\cite{F66} about the semilinear heat equation. In general, nonlinear phenomena may break the boot-strap argument which allows to prolong local-in-time solutions. H. Fujita investigated how this occurrence is prevented for sufficiently small initial data if, and only if, the power nonlinearity is larger than a given threshold exponent.

The fact that the diffusion phenomenon, i.e., the analogy with the corresponding heat equation, depends on the size of the dissipation parameter~$\mu$ in~\eqref{eq:CPlinear} suggests that the size of~$\mu$ has a direct influence on the critical exponent for~\eqref{eq:CP}. For~$\mu\geq n+2$, the author~\cite{DA15} used weighted energy estimates similar to the ones employed in~\cite{IT05}, to prove that the critical exponent is also Fujita exponent~$1+2/n$.

\subsection{The transition to a new critical exponent}

In~\cite{DALR15}, the author, S. Lucente and M. Reissig studied the special case~$\mu=2$ and showed that the critical exponent for~\eqref{eq:CP} was given by
\[ p_\crit=\max\{p_\Str(n+2),1+2/n\}= \begin{cases}
3 & \text{if~$n=1$,}\\
p_\Str(n+2) & \text{if~$n\geq2$,}
\end{cases} \]
where~$p_\Str(k)$ is the critical exponent conjectured by W.A. Strauss~\cite{Strauss} (see also~\cite{S89}) for the semilinear wave equation (namely, $\mu=0$ in~\eqref{eq:CP}), i.e., the solution to
\[ \frac{k-1}2(p-1)-1-\frac1p=0. \]
%
%or, equivalently,
%%
%\[ (k-1)(p-1)^2 + (k-3)(p-1) -4 =0. \]
%
The conjecture of W.A. Strauss for the semilinear wave equation was supported by the result obtained in the pioneering paper by F. John~\cite{J} in space dimension~$n=3$ and by the blow-up result obtained by R.T. Glassey~\cite{G} in space dimension~$n=2$. It was later proved in a series of papers, see~\cite{JZ, Sc, Si, YZ06} for blow-up results, and~\cite{A, Georgiev, GLS, G2, K5, LS96, T, Zhou} for existence results.

In~\cite{DALR15}, the blow-up in finite time for the solution to~\eqref{eq:CP} with~$\mu=2$ is proved in any space dimension~$n\geq1$ for~$1<p\leq p_\crit$, and the global-in-time existence of small data solutions is proved for~$p>p_\crit$ in space dimension~$n=2,3$. This latter result is extended in any space dimension~$n\geq5$, odd (see~\cite{DAL15}) and in any space dimension~$n\geq4$, even (see~\cite{Pal19}).

The nature of the competition between the shifted Strauss exponent and the Fujita exponent, leaded to the conjecture that the critical exponent is
\begin{equation}\label{eq:pcrit}
p_\crit=\max\{p_\Str(n+\mu),1+2/n\}=\begin{cases}
1+2/n & \text{if~$\mu\geq\bar \mu$,}\\
p_\Str(n+\mu) & \text{if~$\mu\leq\bar \mu$,}
\end{cases}\end{equation}
where the threshold value~$\bar\mu$, which corresponds to the solution to~$p_\Str(n+\mu)=1+2/n$, is given by
\begin{equation}\label{eq:barmu}
\bar \mu=n-1+\frac4{n+2}\,.
\end{equation}
By ``shifted'' Strauss exponent we mean that the space dimension~$n$ is shifted by a quantity equal to the size of the parameter~$\mu$, in the computation of the exponent.

M. Ikeda and M. Sobajima~\cite{IkedaSob17} obtained blow-up in finite time for~$f=|u|^p$ if $1<p\leq p_\Str(n+\mu)$ for suitable data, when~$\mu\leq\bar\mu$ (see also~\cite{TuLin19}), so proving the nonexistence side of the conjecture. Their result extended the blow-up result obtained for~$1<p\leq p_\Str(n+2\mu)$ by N.- A. Lai, H. Takamura, K. Wakasa in~\cite{LaiTakWak17}.

Overall, there has been a growing interest in recent years on the problems originated by the study of~\eqref{eq:CP} in~\cite{DA15, DALR15}. For lifespan estimates of the local-in-time solutions we address the reader to~\cite{IKTW, KS, KTW, Waka16}. %For models with different nonlinearities, we address the reader to~\cite{ITK, PT+} (Pal, Pal Lucente)
A closely related model is the semilinear wave equation with scale-invariant mass and dissipation, namely, a term~$mt^{-2}u$ is added into~\eqref{eq:CP}; for the studies on this topic, we address the reader to~\cite{CP19, DAP19, NPR17, Pal18, Pal19t, Pal20, PR18, PR19} and the references therein.

\subsection{Result for the one-dimensional case}

In this paragraph, we consider singular problem~\eqref{eq:CPsing} and regular problem~\eqref{eq:CP} in the one-dimensional case.
\begin{theorem}\label{thm:main1}
Let~$n=1$, $\mu>0$ and~$p>p_\crit=\max\{p_\Str(1+\mu),3\}$. Then there exists~$\eps>0$ such that for any
\begin{equation}\label{eq:datasing}
u_0\in L^1\cap H^1, \quad \text{with}\ A=\|u_0\|_{L^1}+\|u_0\|_{H^1}\leq\eps,
\end{equation}
there exists a unique $u\in\mathcal C([0,\infty),H^1)\cap\mathcal C^1([0,\infty),L^2)$, global-in-time energy solution to~\eqref{eq:CPsing}, and for any
\begin{equation}\label{eq:data}
u_1\in L^1\cap L^2, \quad \text{with}\ A=\|u_1\|_{L^1}+\|u_1\|_{L^2}\leq\eps,
\end{equation}
there exists a unique $u\in\mathcal C([t_0,\infty),H^1)\cap\mathcal C^1([t_0,\infty),L^2)$, global-in-time energy solution to~\eqref{eq:CP}. Moreover, for any~$\delta>0$, the energy estimate
\begin{equation}\label{eq:energyestn1}
E(t) = \frac12\,\|u_t(t,\cdot)\|_{L^2}^2 + \frac12\,\|u_x(t,\cdot)\|_{L^2}^2
    \leq C\,A^2\times\begin{cases}
(1+t)^{-3} & \text{if~$\mu>3$},\\
(1+t)^{\delta-3}\, & \text{if~$\mu=3$,}\\
(1+t)^{-\mu} & \text{if~$0<\mu<3$ with~$\mu\neq1$,}\\
(1+t)^{-1}(1+\log(1+t))^2 & \text{if~$\mu=1$,}
\end{cases}
\end{equation}
holds, and we have the decay estimate
\begin{equation}\label{eq:decayn1}
\|u(t,\cdot)\|_{L^q} \leq C\,A\times \begin{cases}
(1+t)^{(1-\mu)_+-1+\frac1q} & \text{if~$1-1/q<\max\{\mu,2-\mu\}$,} \\
(1+t)^{\delta-\frac\mu2} & \text{if~$1-1/q\geq \max\{\mu,2-\mu\}$,}
\end{cases}
\end{equation}
where~$C>0$, for any~$q\in[3,\infty)$.
\end{theorem}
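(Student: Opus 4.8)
The plan is to set up a contraction mapping argument in a suitably weighted solution space, using Duhamel's formula together with the linear decay estimates that follow from the energy estimate~\eqref{eq:energyest2}. First I would introduce the linear propagator: write the solution to~\eqref{eq:CPsing} (respectively~\eqref{eq:CP}) as the sum of the linear evolution of the data plus the Duhamel integral of the nonlinearity, where the Duhamel kernel is built from the solution operator to the inhomogeneous linear EPD equation. The crucial preliminary input is a family of linear estimates of the form $\|\partial_t^j \nabla^k v(t,\cdot)\|_{L^q}$ against the $L^1\cap L^2$ norm of the source, with decay rates dictated by~$\mu$; these $L^p$--$L^q$ type estimates (presumably established in an earlier section of the paper, e.g.\ via the representation formula for the one-dimensional EPD operator and interpolation) are what I would assume. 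Against these I define the metric space
\[
X=\Big\{u:\ \|u\|_X\doteq\sup_{t\geq t_0}\big(w_E(t)^{-1}E(t)^{1/2}+\sup_{q}w_q(t)^{-1}\|u(t,\cdot)\|_{L^q}\big)<\infty\Big\},
\]
where the weights $w_E,w_q$ are exactly the right-hand sides of~\eqref{eq:energyestn1} and~\eqref{eq:decayn1}, and I equip $X$ with the natural distance.

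The core of the argument is then to show that the map $\Phi(u)=(\text{linear part})+(\text{Duhamel of }f(u))$ maps $X$ into itself and is a contraction on a small ball, provided $A\leq\eps$ with $\eps$ small. For the self-mapping estimate I would plug the ansatz $\|u\|_X\leq M$ into the nonlinearity: by~\eqref{eq:fu} one has $\|f(u(\tau,\cdot))\|_{L^1\cap L^2}\lesssim \|u(\tau,\cdot)\|_{L^p}^p+\|u(\tau,\cdot)\|_{L^{2p}}^p$, which by the decay weight $w_q$ is bounded by $M^p$ times a power of $(1+\tau)$. Inserting this into the Duhamel integral and applying the linear estimates, the whole self-mapping reduces to checking that a collection of time integrals of the schematic form
\[
\int_{t_0}^t (\text{linear decay at time }t-\tau\ \text{or}\ t/\tau)\cdot(1+\tau)^{-\alpha(p,\mu,q)}\,d\tau
\lesssim w_E(t)\ \text{or}\ w_q(t)
\]
converges and reproduces the target weight. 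The contraction estimate is entirely parallel, using the Lipschitz-type bound in~\eqref{eq:fu} to control $f(u)-f(\tilde u)$ by $|u-\tilde u|\,(|u|^{p-1}+|\tilde u|^{p-1})$ and hence by the distance $\|u-\tilde u\|_X$ times $M^{p-1}$.

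The exponent condition $p>p_\crit=\max\{p_\Str(1+\mu),3\}$ is precisely what makes those time integrals integrable with the correct rates: the Fujita threshold $3=1+2/n$ for $n=1$ governs the low-frequency (heat-like) part of the estimate, while the shifted Strauss threshold $p_\Str(1+\mu)$ governs the wave-like part, and the various case distinctions in~\eqref{eq:energyestn1} and~\eqref{eq:decayn1} (namely $\mu\gtrless3$, $\mu=1$, and the borderline $1-1/q=\max\{\mu,2-\mu\}$) arise from logarithmic losses when the decay rate of the linear flow coincides with the decay of the forcing term. The main obstacle I anticipate is bookkeeping the interplay between the $q$-dependent decay~\eqref{eq:decayn1} and the energy decay~\eqref{eq:energyestn1}: one must choose admissible Lebesgue exponents $q\in[3,\infty)$ so that Gagliardo--Nirenberg interpolation of $\|u\|_{L^{2p}}$ between $\|u\|_{L^q}$ and the energy $\|u_x\|_{L^2}$ feeds back consistently, and the borderline cases $\mu=3$ and $\mu=1$ require the small $\delta>0$ loss and the logarithmic correction respectively, which must be tracked so that the fixed-point iteration still closes. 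The singular problem~\eqref{eq:CPsing} demands additional care near $t=0$, where the coefficient $\mu/t$ is unbounded, so I would treat the initial layer $t\in[0,t_0]$ separately using the local existence theory before gluing it to the global estimate.
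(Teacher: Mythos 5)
Your proposal follows the paper's route in all essentials: the paper proves Theorem~\ref{thm:main1} by a contraction argument (first for the generalized regular problem, Theorem~\ref{thm:maingen}, with nonlinearity $t^{-\alpha}f(u)$, then adapted to $\alpha=0$ with $(1+t)$-weights) in exactly the space you describe, with the right-hand sides of \eqref{eq:energyestn1} and \eqref{eq:decayn1} built into the norm \eqref{eq:normaXsing}, with $\|f(u)-f(w)\|_{L^1\cap L^2}$ controlled through the $L^p$ and $L^{2p}$ components of that norm via H\"older as in \eqref{eq:fu1}, and with $p>p_\crit$ entering precisely as the integrability condition on the Duhamel time integrals. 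Three of your side remarks should be corrected or sharpened, though none is fatal. First, the linear input neither follows from the energy estimate \eqref{eq:energyest2} nor has the convolution form ``decay at time $t-\tau$'': the equation is not time-translation invariant, and the estimates of Corollary~\ref{cor:main} and Proposition~\ref{prop:energy} (proved via the Bessel-function representation of $\hat K(t,s)$ and the Miyachi and Mikhlin--H\"ormander multiplier theorems) carry separate powers of $s$ and $t$ --- e.g.\ $s^{\min\{1,\mu\}}\,t^{(1-\mu)_+-1+\frac1q}$ in \eqref{eq:rqdecaygood} --- and the power of $s$ is exactly what encodes $p_\crit$ (Remark~\ref{rem:1p}); so in your schematic integral only the ``$t/\tau$'' reading is correct, and with the ``$t-\tau$'' reading the thresholds would come out wrong. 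Second, the Gagliardo--Nirenberg feedback you anticipate is unnecessary: since your norm already includes the supremum over $q\in[p_\crit,\infty)$, the $L^{2p}$ norm of $u$ is directly available, which is how the paper proceeds. Third, for the singular problem the paper does not glue a local solution on an initial layer: it proves separate $L^r$--$L^q$ and energy estimates for the singular propagator $(t\xii)^{-\nu}J_\nu(t\xii)$, namely \eqref{eq:rqsing} and \eqref{eq:energysing}, replaces the $s$-weights by $(1+s)$ in the nonlinear bound \eqref{eq:fusing}, splits short time $t\leq1$ (where $L^q$--$L^q$ bounds suffice) from long time $t\geq1$, and observes that for $\alpha=0$ the most singular power in the Duhamel integrand, $s^{\frac{\mu-1}2}$ when $\mu\in(0,1)$, is integrable at $s=0$, so the contraction closes on all of $[0,T]$ at once; your gluing alternative is workable in principle, but a local theory near the singular coefficient $\mu/t$ would itself require the same singular linear estimates, so the paper's direct treatment is both shorter and what you should adopt.
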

Theorem~\ref{thm:main1} proves the conjecture that the critical exponent for~\eqref{eq:CP} is given by~\eqref{eq:pcrit} in space dimension~$n=1$ and extends the validity of this result to the singular problem~\eqref{eq:CPsing}.
\begin{remark}\label{rem:n1}
Theorem~\ref{thm:main1} provides the global-in-time existence of energy solutions to~\eqref{eq:CPsing} for~$p>3$ if~$\mu\geq4/3$ and for
\[ p>p_\Str(1+\mu)= 1 + \frac{2-\mu + \sqrt{\mu^2+12\mu+4}}{2\mu}, \]
if~$\mu\in(0,4/3]$. As expected, $p_\Str(1+\mu)\nearrow\infty$ as~$\mu\searrow0$ ($p_\Str(1+\mu)\sim1+2/\mu$ for small~$\mu$), consistently with the blow-up result for the semilinear wave equation without damping.
\end{remark}
\begin{remark}\label{rem:optimaln1}
The exponent~$3$ in Theorem~\ref{thm:main1} is the Fujita exponent and it is sharp, in the sense that no global-in-time solutions exist if~$f=|u|^p$ with~$p\in(1,3]$, for suitable sign assumption on the initial datum, see Theorem 1.1 in~\cite{DAL13}. The exponent~$p_\Str(1+\mu)$ in Theorem~\ref{thm:main1} is a shifted Strauss exponent. The nonexistence of global-in-time solutions for~$f=|u|^p$ with $3<p\leq p_\Str(1+\mu)$ has been recently proved in~\cite{IkedaSob17}.
\end{remark}
In Section~\ref{sec:proof1} we also discuss the analogous of Theorem~\ref{thm:main1} when a nonlinearity~$t^{-\alpha}f(u)$ is considered. On the one hand, this generalization is of interest for the possibility to obtain, by a change of variable, results for semilinear generalized Tricomi equations
\[ w_{tt}- t^{2\ell}\,w_{xx} = f(w), \]
setting~$\mu=\ell/(\ell+1)$ and~$\alpha=2\mu$ (see Section~\ref{sec:Tricomi}). On the other hand, this generalization provides more insights about how the critical exponent~$p_\crit$ depends on~$\mu$ and~$\alpha$.

\subsection{Result in the multidimensional case}

In space dimension~$n=2,3,4,5$, we may prove the existence of global-in-time energy solutions, for small data in~$L^1$ and in the energy space, to the regular Cauchy problem~\eqref{eq:CP}, when~$\mu\geq n$ and~$p>1+2/n$.
\begin{theorem}\label{thm:main2}
Let~$n=2,3,4,5$ and~$\mu\geq n$. Assume that~$p>p_\crit=1+2/n$, and that~$p\leq 1+2/(n-2)$ if~$n\geq3$. Then there exists~$\eps>0$ such that for any initial data as in~\eqref{eq:data}, there exists a unique global-in-time energy solution~$u$ to~\eqref{eq:CP}, where $u\in\mathcal C([t_0,\infty),H^1)\cap\mathcal C^1([t_0,\infty),L^2)$, if~$n=2$, and~$u\in\mathcal C([t_0,\infty),H^1)\cap\mathcal C^1([t_0,\infty),L^2)\cap L^\infty([t_0,\infty),L^{1+\frac2n})$, if~$n=3,4,5$. Moreover, the energy estimate
\begin{align}\nonumber
E(t)
    & = \frac12\,\|u_t(t,\cdot)\|_{L^2}^2 + \frac12\,\|\nabla u(t,\cdot)\|_{L^2}^2\\
    \label{eq:energyestn2}
    & \leq C\,\big(\|u_1\|_{L^1}^2+\|u_1\|_{L^2}^2\big)\times\begin{cases}
t^{-n-2} & \text{if~$\mu>n+2$},\\
t^{-n-2}\,(1+\log (t/t_0)) & \text{if~$\mu=n+2$,}\\
t^{-\mu} & \text{if~$n\leq\mu<n+2$,}
\end{cases}
\end{align}
holds, and for any~$q\in[p_\crit,2+4/(n-1)]$ and~$\delta>0$, the following decay estimate holds:
\begin{equation}\label{eq:decay2}
\|u(t,\cdot)\|_{L^q} \leq C\,\begin{cases}
t^{-n\left(1-\frac1q\right)}\,\big(\|u_1\|_{L^1}+\|u_1\|_{L^2}\big) & \text{if~$\mu>n+1-2/q$,}\\
t^{\delta+(n-1)\left(\frac12-\frac1q\right)-\frac\mu2}\,\big(\|u_1\|_{L^1}+\|u_1\|_{L^2}\big) & \text{if~$\mu\leq n+1-2/q$,}
\end{cases}
\end{equation}
where~$C=C(t_0)>0$.
\end{theorem}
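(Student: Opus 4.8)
The plan is to solve the regular Cauchy problem~\eqref{eq:CP} by a contraction argument built on Duhamel's representation and the linear decay estimates for~\eqref{eq:CPlinear}. Writing~$E_1(t,s)$ for the propagator sending~$v_1$ to the solution of~\eqref{eq:CPlinear} at time~$t$, an energy solution of~\eqref{eq:CP} is exactly a fixed point of
\[ \Phi(u)(t) = E_1(t,t_0)\,u_1 + \int_{t_0}^t E_1(t,s)\,f(u(s,\cdot))\,ds, \]
the first term handling the data (recall~$u(t_0,\cdot)=0$, so the \emph{same} propagator governs both the linear part and the source). Since~$\mu\geq n$ is large, the diffusion phenomenon is in force and~$E_1(t,s)$ obeys heat-type bounds: the energy of~$E_1(t,s)g$ decays at the rate of~\eqref{eq:energyestn2} and its~$L^q$ norm at the rate of~\eqref{eq:decay2}, both controlled by~$\|g\|_{L^1}+\|g\|_{L^2}$. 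The first task is therefore to record these linear estimates, together with the analogous bounds for~$\partial_t E_1(t,s)g$ and~$\nabla E_1(t,s)g$ in~$L^2$ needed to reconstruct the energy of~$\Phi(u)$; these should follow from weighted-energy and diffusion-phenomenon estimates in the spirit of~\cite{W04, DA15, TY01}, sharpened to expose the precise dependence on~$\mu$.

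Next I would fix the solution space. Let~$X$ be the set of~$u\in\mathcal C([t_0,\infty),H^1)\cap\mathcal C^1([t_0,\infty),L^2)$ (with the extra factor~$L^\infty([t_0,\infty),L^{1+2/n})$ when~$n\geq3$), equipped with a norm
\[ \|u\|_X = \sup_{t\geq t_0}\Big( w_E(t)\,E(t;u)^{1/2} + \sum_q w_q(t)\,\|u(t,\cdot)\|_{L^q} \Big), \]
where~$w_E,w_q$ are the reciprocals of the decay rates in~\eqref{eq:energyestn2}--\eqref{eq:decay2} and~$q$ runs over the endpoints~$1+2/n$ and~$2+4/(n-1)$ of the admissible interval. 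By construction the linear estimates give~$\|E_1(\cdot,t_0)u_1\|_X\leq C\,A$ with~$A=\|u_1\|_{L^1}+\|u_1\|_{L^2}$.

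The core step is that~$\Phi$ maps a small ball of~$X$ into itself and contracts. By~\eqref{eq:fu}, for~$r\in\{1,2\}$,
\[ \|f(u(s,\cdot))-f(w(s,\cdot))\|_{L^r}\leq C\,\big(\|u(s,\cdot)\|_{L^{rp}}^{p-1}+\|w(s,\cdot)\|_{L^{rp}}^{p-1}\big)\,\|u(s,\cdot)-w(s,\cdot)\|_{L^{rp}}, \]
and each Lebesgue norm is estimated by Gagliardo--Nirenberg interpolation between~$L^{1+2/n}$ (or~$L^2$) and~$H^1$; the subcriticality hypothesis~$p\leq1+2/(n-2)$ for~$n\geq3$ is exactly what guarantees~$H^1\hookrightarrow L^{2p}$, so that~$f(u(s,\cdot))\in L^1\cap L^2$ with norm bounded by the weights of~$X$ raised to the power~$p$. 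Feeding these into the two families of linear estimates reduces~$\Phi$ to time integrals of the schematic form~$\int_{t_0}^t\mathcal K(t,s)\,s^{-\beta p}\,ds$, where~$\mathcal K$ is the relevant linear rate and~$s^{-\beta}$ the decay carried by~$\|u(s,\cdot)\|$; the scheme closes precisely when each such integral is dominated by the target weight at time~$t$, and this holds if and only if~$p>1+2/n$, which is where the Fujita threshold enters. One thereby obtains~$\|\Phi(u)\|_X\leq C\,A+C\,\|u\|_X^p$ and~$\|\Phi(u)-\Phi(w)\|_X\leq C\,(\|u\|_X^{p-1}+\|w\|_X^{p-1})\,\|u-w\|_X$, so that for~$A\leq\eps$ small Banach's fixed point theorem yields a unique~$u$ in a small ball of~$X$, whose membership in~$X$ encodes exactly~\eqref{eq:energyestn2}--\eqref{eq:decay2}.

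The main obstacle I anticipate is twofold. First, proving the sharp linear~$L^1\cap L^2\to L^q$ and energy estimates with the exact~$\mu$-dependent rates of~\eqref{eq:energyestn2}--\eqref{eq:decay2}: the critical dissipation~$\mu/t$ is not covered by the standard effective-damping theory and must be analysed directly, via representation formulas or weighted energies, with careful tracking of how the rate saturates at~$t^{-(n+2)}$ once~$\mu\geq n+2$. Second, the verification that the Duhamel time-integrals close at the Fujita exponent uniformly across the whole range~$q\in[1+2/n,2+4/(n-1)]$ and the regime~$n\leq\mu<n+2$, where the slower energy rate~$t^{-\mu}$ (rather than~$t^{-(n+2)}$) governs and leaves the least room to spare.
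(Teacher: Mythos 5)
Your overall architecture (Duhamel representation plus a contraction in a weighted-norm space) is the same as the paper's, but there is a genuine gap at the heart of the scheme: you assume the propagator obeys $(L^1\cap L^2)-L^q$ estimates with the rates of~\eqref{eq:decay2} and then plan to measure the nonlinearity only through $\|f(u)\|_{L^1}$ and $\|f(u)\|_{L^2}$ (via $L^p$, $L^{2p}$ norms of~$u$, with Gagliardo--Nirenberg against the endpoint norms and the energy). This is precisely what is \emph{not} available here. At high frequencies the solution is built from wave-type multipliers $\xii^{-1}e^{\pm i\xii}$, which are $M_r^q$-bounded only when $d(r,q)\leq1$ (Lemma~\ref{lem:Miy}), and $d(1,q)=(n+1)/2-1/q>1$ for every $q>1$ when $n\geq3$, and for $q\geq2$ when $n=2$. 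The paper's proof therefore runs through Corollary~\ref{cor:main2}: mixed $(L^1\cap L^{r_2(q)})-L^q$ estimates with $d(r_2(q),q)=1$, so that $Fu$ must be estimated through $\|f(u)\|_{L^{r_2(q)}}$, hence through $\|u\|_{L^{r_2(q)p}}$ for a $q$-dependent \emph{continuum} of Lebesgue exponents. Accordingly the paper's norm~\eqref{eq:normaXp} controls $t^{\gamma_q}\|u(t,\cdot)\|_{L^q}$ for all $q\in[p_\crit,2+4/(n-2)]$, not just at two endpoints, and closing the scheme rests on the structural facts $r_2(q)\,p_\crit\leq 2+4/(n-1)$ and $r_2(q)\,p_\crit\leq\max\{2,q\}$, together with borderline inequalities such as $-\mu/n-\frac{n-1}{n}\left(\frac12-\frac1q\right)\leq-1$, in which $\mu\geq n$ enters with equality attainable. (Incidentally, the hypothesis $p\leq1+2/(n-2)$ is used to keep $2p$ inside this controlled range of exponents, equivalently $d(2,2p)\leq1$, not through a Sobolev embedding step; and your first ``anticipated obstacle'' is resolved in the paper not by weighted energies or diffusion-phenomenon arguments but by the explicit Bessel-function representation of $\hat K(t,s)$, scaling, and Miyachi's multiplier theorem --- it is exactly this analysis that reveals the $(L^1\cap L^{r_2})$ structure your proposal omits.)

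The loss is quantitative, not stylistic. Recovering $\|f(u)\|_{L^{r_2(q)}}$ from its $L^1$ and $L^2$ norms by interpolation (equivalently, recovering intermediate $L^q$ norms of~$u$ from endpoint norms) costs a power of~$t$ exactly in the regime $n\leq\mu<n+1$: there $\gamma_q$ is the minimum of two affine functions of $1/q$ with different slopes ($-n$ for the rate $n(1-1/q)$, $-(n-1)$ for the rate $\frac{\mu+n-1}2-\frac{n-1}q$), hence strictly concave across the kink at $1/q=(n+1-\mu)/2$, so $\theta\gamma_p+(1-\theta)\gamma_{2p}<\gamma_{r_2(q)p}$ whenever the exponents straddle it. Concretely, take $n=3$, $\mu=3$, $q=p_\crit=5/3$, so $r_2(q)=15/13$ and $r_2(q)p_\crit=25/13$: the paper's high-frequency Duhamel integrand is $s^{1+\frac{n-1}2-\frac1q-p\gamma_{r_2(q)p}}$, whose exponent tends to $-1$ as $p\downarrow p_\crit$ (so the integral converges for every $p>p_\crit$), whereas the interpolated version using $\gamma_{5/3}=6/5$ and $\gamma_{10/3}=19/10-\delta$ gives an exponent tending to $-41/45\approx-0.91$. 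For $p$ near $p_\crit$ the integral then grows like a positive power of~$t$, the target weight is not reproduced, and the contraction cannot close: your scheme would only prove the theorem for $p$ bounded away from the Fujita exponent (or for $\mu\geq n+1$, where both exponents lie in the same affine piece and interpolation is lossless). Your assertion that the time integrals close ``if and only if $p>1+2/n$'' is thus not a computation but exactly the delicate point, and with the reduced family of norms you chose, it is false in the stated range $n\leq\mu<n+2$.
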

The assumption~$\mu\geq n$ in Theorem~\ref{thm:main2} means that we have the decay rate~$t^{-n\left(1-\frac1q\right)}$ for any~$q\in[p_\crit,2)$, in space dimension~$n=3,4,5$. This will be a crucial property in our proof of Theorem~\ref{thm:main2}.
\begin{remark}
In space dimension~$n=2$, the result is easily extended to the singular problem~\eqref{eq:CPsing}, following as in the proof of Theorem~\ref{thm:main1}. Moreover, in space dimension~$n=2$, the threshold condition~$\mu\geq2$ is also sharp, since~$\bar\mu(2)=2$ in~\eqref{eq:barmu}. That is, for~$\mu\in(0,2)$, the blow-up in finite time occurs for~$f=|u|^p$ with $2<p\leq p_\Str(2+\mu)$ for suitable data, when~$\mu<2$.
\end{remark}
Finally, we consider problem~\eqref{eq:CP} with the assumption that initial data are only small in~$L^2$. As first noticed in~\cite{LN92} for parabolic problems, initial data which do not decay sufficiently fast at infinity, in particular are not in~$L^1$, modify the critical exponent, even if they are taken pointwisely small. In particular, if~$L^1$ smallness of the data is replaced by~$L^2$ smallness ``only'', the critical exponent switches from~$1+2/n$ to~$1+4/n$. In the following Theorem~\ref{thm:L2}, we prove the global-in-time existence of weak (Sobolev) solutions in any space dimension~$n\geq3$ for~$p\geq 1+4/n$, and~$p\leq p_\conf = 1+4/(n-1)$, for~$\mu\geq 2n/(n+3)$, under the assumption of small data in~$L^2$. Here~$p_\conf$ is the \emph{conformal critical exponent} for semilinear waves (see, for instance, \cite{LS95}). For the sake of brevity, we omit the study of the easier case of existence of global-in-time solutions in space dimension~$n=1,2$.
\begin{theorem}\label{thm:L2}
Let~$n\geq3$ and $\mu\geq\bar\mu$, where
\[ \bar\mu = \frac{2n}{n+3}. \]
Assume that~$p\geq p_\crit$, where~$p_\crit=1+4/n$, and that~$p\leq p_\conf = 1+4/(n-1)$. Then there exists~$\eps>0$ such that for any
\begin{equation}\label{eq:data2}
u_1\in L^2, \quad \text{with}\ \|u_1\|_{L^2}\leq\eps,
\end{equation}
there exists a unique $u\in L^\infty([t_0,\infty),L^{q_0}\cap L^{q_1})$, global-in-time energy solution to~\eqref{eq:CP}, where
\begin{equation}\label{eq:q0q1}
q_0=2+\frac4{n+1}, \quad q_1=p_\conf+1=2+\frac4{n-1}\,.
\end{equation}
Moreover, for any~$q\in[q_0,q_1]$, the following decay estimate holds:
\begin{equation}\label{eq:decayL2}
\|u(t,\cdot)\|_{L^q} \leq C\,t^{-\frac12\min\left\{n\left(1-\frac2q\right),\mu\right\}}\,\|u_1\|_{L^2},
\end{equation}
where~$C=C(t_0)>0$, exception given for the case~$q=n=3$ when~$\mu=1$. If~$\mu=1$ and~$q=n=3$ and~$\mu=1$, estimate~\eqref{eq:decayL2} is replaced by
\[ \|u(t,\cdot)\|_{L^3} \leq C\,t^{-\frac12}(1+\log (t/t_0))\,\|u_1\|_{L^2}, \]
when~$q=3$.
\end{theorem}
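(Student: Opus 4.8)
The plan is to recast \eqref{eq:CP} in Duhamel form and to solve it by a contraction argument in a space that encodes exactly the decay rates of \eqref{eq:decayL2}. Write $\mathcal E(t,s)$ for the solution operator of the linear problem \eqref{eq:CPlinear}, sending $v_1\mapsto v(t)$; then $u$ solves \eqref{eq:CP} if and only if it is a fixed point of
\[
  \Phi(u)(t)=\mathcal E(t,t_0)\,u_1+\int_{t_0}^{t}\mathcal E(t,s)\,f(u(s))\,ds .
\]
I would work in the Banach space
\[
  X=\bigl\{\,u\in L^\infty([t_0,\infty),L^{q_0}\cap L^{q_1})\ :\ \|u\|_X<\infty\,\bigr\},
  \qquad
  \|u\|_X=\sup_{t\ge t_0}\ \sum_{q\in\{q_0,q_1\}} t^{\frac12\min\{n(1-2/q),\mu\}}\,\|u(t,\cdot)\|_{L^q},
\]
with the logarithmic modification of the weight in the exceptional case $q=n=3$, $\mu=1$. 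Since the two weights are the values at the endpoints of the family \eqref{eq:decayL2}, interpolation between $L^{q_0}$ and $L^{q_1}$ promotes any bound on $\|u\|_X$ to the full range $q\in[q_0,q_1]$, so it is enough to control the two endpoint norms.

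The linear part is immediate: by the $L^2$--$L^q$ decay estimates for the homogeneous propagator (the $f=0$ case of \eqref{eq:decayL2}, furnished by the linear theory developed earlier), $\|\mathcal E(\cdot,t_0)u_1\|_X\le C\|u_1\|_{L^2}$. The work is in the Duhamel term. For each endpoint $q$ I would invoke retarded $L^r$--$L^q$ estimates $\|\mathcal E(t,s)g\|_{L^q}\le w(t,s)\,\|g\|_{L^r}$, with the exponents selected by the nonlinearity: at the upper endpoint $q=q_1=p_\conf+1$ the natural choice is the conformal dual $r=q_1/(q_1-1)$, because for the conformal power one has $\|\,|u(s)|^p\,\|_{L^{q_1/(q_1-1)}}=\|u(s)\|_{L^{q_1}}^{p}$. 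More generally one writes $\|\,|u(s)|^p\,\|_{L^r}=\|u(s)\|_{L^{rp}}^{p}$ and chooses $r$ so that $rp\in[q_0,q_1]$; Hölder and the definition of $\|\cdot\|_X$ then give $\|f(u(s))\|_{L^r}\le C\,s^{-p\gamma}\|u\|_X^{p}$, with $\gamma$ the interpolated rate. Substituting and integrating $w(t,s)\,s^{-p\gamma}$ over $s\in[t_0,t]$ must reproduce the weight $t^{-\frac12\min\{n(1-2/q),\mu\}}$; this is possible precisely under the stated hypotheses.

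Indeed, $p\ge 1+4/n=p_\crit$ is what renders the time integral convergent at the critical parabolic rate, equality being the delicate $L^2$-critical borderline; $p\le p_\conf$ keeps the pair $(r,q)$ inside the admissible range where the $L^r$--$L^q$ estimates for the dissipative wave operator hold; and $\mu\ge\bar\mu=2n/(n+3)$ guarantees, at the lower endpoint, that $\tfrac12\min\{n(1-2/q_0),\mu\}=n/(n+3)$, i.e.\ that the full diffusion rate, rather than the dissipation-limited rate $\mu/2$, is attained at $q_0$; this is the $L^2$ analogue of the crucial decay property highlighted after Theorem~\ref{thm:main2}. Once $\Phi$ maps a small ball of $X$ into itself, the contraction property follows from the Lipschitz bound \eqref{eq:fu}: estimating $f(u)-f(w)$ by $|u-w|\bigl(|u|^{p-1}+|w|^{p-1}\bigr)$ and applying Hölder with the same exponents yields $\|\Phi(u)-\Phi(w)\|_X\le C\bigl(\|u\|_X^{p-1}+\|w\|_X^{p-1}\bigr)\|u-w\|_X$. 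Choosing $\eps$ small, Banach's fixed point theorem produces the unique solution, and the resulting bound $\|u\|_X\le C\|u_1\|_{L^2}$ is precisely \eqref{eq:decayL2}. The main obstacle I expect is the nonlinear time integral at the critical exponent $p=p_\crit$, where convergence is only marginal and the exceptional logarithmic case $q=n=3$, $\mu=1$ must be isolated and treated with the modified weight; equivalently, the technical heart is to assemble retarded $L^r$--$L^q$ estimates for $\mathcal E(t,s)$ that simultaneously carry the diffusion decay rate, remain valid up to $p_\conf$, and possess the correct joint dependence on $t$ and $s$, so that the integral closes at $p=1+4/n$ for all $q\in[q_0,q_1]$ in the balance between the diffusive ($\mu$-dependent) and the dispersive (conformal) regimes.
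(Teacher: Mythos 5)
Your architecture is the same as the paper's: Duhamel representation plus a contraction in a weighted subspace of $L^\infty([t_0,\infty),L^{q_0}\cap L^{q_1})$, with the linear part handled by $L^2$--$L^q$ decay estimates and the Duhamel term by retarded $L^{r(q)}$--$L^q$ estimates with $r(q)$ chosen so that $r(q)p\in[q_0,q_1]$ (the paper fixes $r(q)$ by $d(r(q),q)=1$, which gives $r(q_1)=q_1'$, exactly your ``conformal dual''; see Remark~\ref{rem:weak}). However, there are two concrete gaps, both sitting precisely at the critical exponent $p=p_\crit=1+4/n$, which the theorem includes. First, your norm records only the two endpoint exponents $q_0,q_1$, and you propose to recover the intermediate range by interpolation. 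But the weight $\gamma_q=\tfrac12\min\{n(1-2/q),\mu\}$ is \emph{concave} in $1/q$ (a minimum of two affine functions), and for $\bar\mu<\mu<2n/(n+1)$ the kink $q^*$ with $n(1-2/q^*)=\mu$ lies strictly inside $(q_0,q_1)$: interpolation from the endpoints only yields the chord rate, strictly smaller than $\gamma_q$ at interior exponents. This loss is fatal rather than cosmetic, because the paper's closing computation at criticality is the exact identity $p_\crit\,\gamma_{r(q)p_\crit}+n\bigl(\tfrac1{r(q)}-\tfrac12\bigr)=2$, with no slack, and it requires knowing which branch of the minimum applies at the exponent $r(q)p_\crit$ (via $r(q)p_\crit\le q$). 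Since, e.g., $r(q_1)p_\crit$ is strictly interior to $(q_0,q_1)$, feeding in the chord rate makes the exponent sum strictly less than $2$, the $s$-integral outgrows the target weight, and your map fails to preserve the ball at $p=p_\crit$ in that range of $\mu$. The paper avoids this by taking the supremum over \emph{all} $q\in[q_0,q_1]$ in the norm \eqref{eq:normaX2}; alternatively, adding the kink exponent $q^*$ to your norm would restore piecewise-linear interpolation.

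Second, you correctly identify the marginal time integral at $p=p_\crit$ as the main obstacle, but you leave it unresolved, and as written it does not close: a direct application of the retarded estimate produces $\int_{t_0}^t s^{-1}\,ds=\log(t/t_0)$, a logarithmic loss over the target weight, so the fixed-point scheme only yields the result for $p>p_\crit$. The paper's device is to trade $(s/t)^\delta\le1$ in \eqref{eq:rqdecaygood} to obtain \eqref{eq:r2qdecay}, i.e.\ an extra factor $t^{-\delta}$ at the cost of $s^{+\delta}$, after which $\int_{t_0}^t s^{\delta-1}\,ds\le C\,t^\delta$ and the exact rate \eqref{eq:decayL2} is recovered with no logarithm even at criticality; the genuine logarithm survives only in the exceptional case $q=n=3$, $\mu=1$, which is treated with the modified weight \eqref{eq:normaX2mu1}, as you anticipate. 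With these two repairs --- the full-range (or three-exponent) norm and the $\delta$-trade in the retarded estimates --- your proposal becomes the paper's proof.
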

The existence exponent~$1+4/n$ is also critical, in the sense that one may easily follow the proof of Theorem 1.1 in~\cite{DAL13}, adding the condition~$u_1\geq \eps\,|x|^{-\frac{n}2}(\log |x|)^{-1}$ (this strategy is inspired by~\cite{MP09}), for large~$|x|$ and for some~$\eps>0$, and prove that no global-in-time solutions to~\eqref{eq:CP} exist for~$1<p<1+4/n$.

We stress that for~$p=p_\crit$, the global-in-time existence of small data solution holds, that is, the critical case belong to the existence range.
\begin{remark}
In space dimension~$n=3$, it is possible to consider also energy solutions $u\in\mathcal C([t_0,\infty),H^1)\cap\mathcal C^1([t_0,\infty),L^2)$ in Theorem~\ref{thm:L2}. The same is possible in space dimension~$n=4$ if~$p=2$.
\end{remark}
\begin{remark}
In Theorem~\ref{thm:L2}, we looked for weak solutions in $L^\infty([t_0,\infty),L^{q_0}\cap L^{q_1})$, but there is no big difference if we look for weak solutions in $L^\infty([t_0,\infty),\dot H^{\kappa_0}\cap \dot H^{\kappa_1})$, with
\[ \kappa_0=n\left(\frac12-\frac1{q_0}\right)=\frac{n}{n+3}, \qquad \kappa_1=n\left(\frac12-\frac1{q_1}\right)=\frac{n}{n+1}\,. \]
\end{remark}

\subsection{Notation}

In this paper we use the following notation.

We denote by~$u(t,x)$ functions depending on the time variable~$t\in I$, with~$I$ interval in~$\R$, and on the space variable~$x\in\R^n$, and we denote by~$\mathfrak{F}$ the Fourier transform acting on the space variable~$x$, in the appropriate functional sense. By~$\Delta$ we denote the Laplace operator~$\sum_{j=1}^n\partial_{x_j}^2$, and by~$\nabla u$ the gradient vector~$(\partial_{x_j}u)_{j=1,\ldots,n}$.

By~$L^q=L^q(\R^n)$, $1\leq q<\infty$, we denote the usual Lebesgue space of measurable functions with~$|u|^q$ integrable with respect to the Lebesgue measure~$dx$ of~$\R^n$. We denote by
\[ \|f\|_{L^q} = \Big(\int_{\R^n} |f(x)|\,dx\Big)^{\frac1q} \]
its norm (a.e. equal functions are identified, as usual). For functions~$u(t,x)$ we denote by~$\|u(t,\cdot)\|_{L^q}$ the~$L^q$ norm of~$u(t,\cdot)$, for a given~$t$. By~$H^1$ we denote the space of~$L^2$ functions with weak gradient in~$L^2$, equipped with norm~$\|f\|_{L^2}+\|\nabla f\|_{L^2}$. By~$\mathcal C(I,H^1)\cap \mathcal C^1(I,L^2)$ we generally denote the space of energy solutions, that is, the maps~$t\mapsto u(t,\cdot)$ and~$t\mapsto u_t(t,\cdot)$ are continuous from~$I$ to, respectively, $H^1$ or~$L^2$. By~$L^\infty(I,L^p)$ we denote the space of functions with~$\|u(t,\cdot)\|_{L^p}$ uniformly bounded, for a.e.~$t\in I$.

We say that~$m$ is a multiplier in~$M_r^q$, for some~$1\leq r\leq q\leq\infty$ if for any~$f\in L^r$ it holds~$T_mf = \mathfrak{F}^{-1}(m\hat f) \in L^q$. We denote
\begin{equation}\label{eq:multnorm}
\|m\|_{M_r^q} = \sup_{\|f\|_{L^r}=1} \|T_mf\|_{L^q}.
\end{equation}

\section{Estimates for the linear problem}\label{sec:linear}

The Euler-Poisson-Darboux equation~\eqref{eq:EPD} is not invariant by time-translation, due to the time-dependent coefficient~$\mu t^{-1}$ in front of~$u_t$. For this reason, we study the regular linear Cauchy problem~\eqref{eq:CPlinear} for~$t\geq s$, with starting time~$s>0$, in view of the application of Duhamel's principle to both the inhomogeneous singular and regular Cauchy problems. The dependence of the obtained estimates on the parameter~$s$ plays a crucial role in the contraction argument employed to prove the existence of global-in-time solutions: a precise evaluation of the dependence on the parameter~$s$ in the estimates is essential to ``catch the critical exponent'' in the application to the semilinear problem.

The following definition is related to the $L^r-L^q$ estimates, $1\leq r\leq q\leq \infty$, for wave-type multipliers~$\xii^{-k}e^{i\xii}$, see Lemma~\ref{lem:Miy}. The definition plays a fundamental role in the estimates for the EPD equation, due to the possibility to subdivide the solution to~\eqref{eq:CPlinear} in wave-type terms at high frequencies.
\begin{Def}
For any~$1\leq r\leq q\leq\infty$, we define
\begin{equation}\label{eq:drq}
d(r,q) = (n-1)\left(\frac1{\min\{r,q'\}}-\frac12\right) + \frac1r-\frac1q = \begin{cases}
    \displaystyle \frac{n}r-\frac{n-1}2-\frac1q & \text{if~$r\leq q'$,}\\
    \displaystyle \frac{n-1}2+\frac1r-\frac{n}q & \text{if~$r\geq q'$.}
    \end{cases}
\end{equation}
\end{Def}
The interplay between a scaling-related decay rate~$t^{-n\left(\frac1r-\frac1q\right)}$ and a loss of decay rate~$t^{d(r,q)}$ will often appear in the following, so it is convenient to notice that
\[ -n\left(\frac1{r}-\frac1q\right) + d(r,q) = (n-1)\left(\frac12-\frac1{\max\{r,q'\}}\right) = \begin{cases}
    \displaystyle (n-1)\left(\frac1q-\frac12\right) & \text{if~$r\leq q'$,}\\
    \displaystyle (n-1)\left(\frac12-\frac1r\right) & \text{if~$r\geq q'$.}
    \end{cases} \]
Our main result for the linear regular problem~\eqref{eq:CPlinear} is the following.
\begin{theorem}\label{thm:main}
Let~$\mu\in\R$. Let~$n=1,2,3$ and~$q\in(1,\infty)$, or~$n\geq4$ and
\begin{equation}\label{eq:qrange}
\frac{2(n-1)}{n+1} \leq q \leq \frac{2(n-1)}{n-3}\,.
\end{equation}
Fix~$r_1,r_2\in(1,q]$. Assume that~$d(r_2,q)\leq1$, where~$d$ is defined in~\eqref{eq:drq}. Then the solution to~\eqref{eq:CPlinear} verifies the following~$(L^{r_1}\cap L^{r_2})-L^q$ estimate:
\begin{equation}\label{eq:gendecay}
\begin{split}
\|v(t,\cdot)\|_{L^q}
     & \leq C_1\,s^{\min\{1,\mu\}}\,t^{(1-\mu)_+-n\left(\frac1{r_1}-\frac1q\right)}\,(t/s)^{\left(d(r_1,q)-\frac{\max\{\mu,2-\mu\}}2\right)_+}\, \|v_1\|_{L^{r_1}} \\
     & \qquad + C_2\,s\,t^{-n\left(\frac1{r_2}-\frac1q\right)}\,(t/s)^{d(r_2,q)-\frac\mu2}\,\|v_1\|_{L^{r_2}} \,,
\end{split}\end{equation}
for some~$C>0$, independent of~$s,t$, if~$\mu\neq1$. If~$\mu=1$, estimate~\eqref{eq:gendecay} remains valid, replacing~$\|v_1\|_{L^{r_1}}$ by~$(1+\log(t/s))\,\|v_1\|_{L^{r_1}}$.

For any~$\eps>0$, the above result remains valid, for~$C_j=C_j(\eps)$, if we replace~$d(r_j,q)$ by~$d(1,q)+\eps$ whenever~$r_j=1$.
\end{theorem}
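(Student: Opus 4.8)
My plan is to diagonalize~\eqref{eq:CPlinear} by the Fourier transform, solve the resulting scalar ordinary differential equation explicitly in terms of Bessel functions, and then read off the~$(L^{r_1}\cap L^{r_2})$--$L^q$ mapping properties of the associated Fourier multiplier through a decomposition of frequency space into zones on which that multiplier is, respectively, diffusion-like or wave-like.

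Applying~$\mathfrak F$ in~$x$ to~\eqref{eq:CPlinear} gives, for each fixed~$\xi$, the scalar equation $\hat v_{tt}+\frac\mu t\,\hat v_t+\xii^2\hat v=0$ with $\hat v(s,\xi)=0$, $\hat v_t(s,\xi)=\hat v_1(\xi)$, where $\hat v=\mathfrak F v$. The substitution $\hat v=t^{(1-\mu)/2}\,w(\xii\,t)$ reduces it to Bessel's equation of order $\nu=\abs{1-\mu}/2$, so that $\phi_1(t)=t^{(1-\mu)/2}J_\nu(\xii t)$ and $\phi_2(t)=t^{(1-\mu)/2}Y_\nu(\xii t)$ form a fundamental system. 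From the Bessel Wronskian $J_\nu Y_\nu'-J_\nu'Y_\nu=2/(\pi z)$ one gets $W(\phi_1,\phi_2)(t)=\frac{2}{\pi}t^{-\mu}$, in accordance with Abel's identity, and therefore $\hat v(t,\xi)=\hat v_1(\xi)\,K(t,s,\xii)$ with
\[ K(t,s,\varrho)=\frac\pi2\,s^{\frac{1+\mu}2}\,t^{\frac{1-\mu}2}\,\big[J_\nu(\varrho s)Y_\nu(\varrho t)-Y_\nu(\varrho s)J_\nu(\varrho t)\big]. \]
Everything then reduces to bounding $\|K(t,s,\cdot)\|_{M_{r_j}^q}$ by the two weights occurring in~\eqref{eq:gendecay}.

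Next I would insert smooth cutoffs separating the low zone $\{\varrho\lesssim 1/t\}$, the intermediate zone $\{1/t\lesssim\varrho\lesssim 1/s\}$ and the high zone $\{\varrho\gtrsim 1/s\}$, and estimate~$K$ on each by the appropriate asymptotics of $J_\nu,Y_\nu$. On the high zone both $\varrho s$ and $\varrho t$ are large, and the large-argument expansion collapses the bracket, producing
\[ K(t,s,\varrho)\sim (s/t)^{\mu/2}\,\frac{\sin(\varrho(t-s))}{\varrho} \]
up to faster-decaying tails; this is exactly a free wave propagator of order~$-1$ damped by $(s/t)^{\mu/2}$. Writing $\sin(\varrho(t-s))/\varrho$ as a combination of the multipliers $\varrho^{-1}e^{\pm i\varrho(t-s)}$ and applying Lemma~\ref{lem:Miy}, whose loss is measured precisely by $d(r,q)$ of~\eqref{eq:drq}, yields the second term of~\eqref{eq:gendecay}; the hypothesis $d(r_2,q)\le1$ is what guarantees that the single available power~$\varrho^{-1}$ of smoothing suffices, and the restriction on~$q$ for~$n\ge4$ in~\eqref{eq:qrange} is exactly the range of validity of that lemma. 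On the low zone, where $J_\nu(z)\sim z^{\nu}$ and $Y_\nu(z)\sim z^{-\nu}$, the multiplier~$K$ degenerates to the $\varrho=0$ solution of the ODE, namely $\frac{s}{1-\mu}\big((t/s)^{1-\mu}-1\big)$, of size $s^{\min\{1,\mu\}}t^{(1-\mu)_+}$; combining this pointwise bound with the Bernstein--Hausdorff--Young gain $t^{-n(1/r_1-1/q)}$ from localization to a ball of radius~$1/t$ gives the diffusion part of the first term. Finally, on the intermediate zone $\varrho t$ is large while $\varrho s$ is small; there the dominant contribution is $\sim s^{\min\{1,\mu\}}t^{-\mu/2}\,\varrho^{-(1/2+\nu)}$ times an oscillation, an oscillatory multiplier of order $1/2+\nu=\max\{\mu,2-\mu\}/2$. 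Comparing its order with the order $d(r_1,q)$ required by Lemma~\ref{lem:Miy}, and using the identity $-n(\tfrac1r-\tfrac1q)+d(r,q)=(n-1)\big(\tfrac12-\tfrac1{\max\{r,q'\}}\big)$ recorded after~\eqref{eq:drq}, produces exactly the corrective factor $(t/s)^{(d(r_1,q)-\max\{\mu,2-\mu\}/2)_+}$, the~$(\cdot)_+$ reflecting that no loss occurs when the available smoothing order already exceeds $d(r_1,q)$.

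The main obstacle is the uniform bookkeeping of the separate powers of~$s$ and~$t$ across the three zones — in particular the matching in the transition regions $\varrho\sim 1/t$ and $\varrho\sim 1/s$, so that the diffusion-like and wave-like model behaviors glue together without loss — together with the two degenerate cases. For $\mu=1$ one has $\nu=0$, and the logarithmic resonance of $J_0,Y_0$ at the origin replaces a power by the factor $1+\log(t/s)$; at the endpoint $r_j=1$ the oscillatory multiplier estimate of Lemma~\ref{lem:Miy} just fails, which forces the stated replacement of $d(r_j,q)$ by $d(1,q)+\eps$. I expect the high- and intermediate-zone analysis, and the care needed to track the exact $s$-dependence (which is what later lets the contraction argument catch the critical exponent), to be the technically delicate part.
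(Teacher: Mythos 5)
Your proposal is correct and follows essentially the same route as the paper: Fourier diagonalization to a Bessel equation, an explicit kernel $\hat K(t,s)$ in Bessel functions, a three-zone frequency decomposition at scales $1/t$ and $1/s$, Mikhlin--H\"ormander/Young bounds on the low zone, and Lemma~\ref{lem:Miy} with loss measured by $d(r,q)$ on the intermediate and high zones, including the $\mu=1$ logarithm from $\nu=0$ and the $\eps$-loss at $r_j=1$. The only divergence is that you work with the pair $(J_\nu,Y_\nu)$ for $\nu=\abs{1-\mu}/2$ uniformly in $\mu$, so that $\tfrac12+\nu=\max\{\mu,2-\mu\}/2$ appears directly, whereas the paper uses $J_{\pm\nu}$ (resp.\ $J_\nu,\mathbf Y_\nu$ for integer $\nu$) and reduces $\mu<1$ to $\mu^\sharp=2-\mu$ via Remark~\ref{rem:change} --- a variant the paper itself notes (after~\eqref{eq:lowintersharp}) leads to~\eqref{eq:gendecay} without that change of variables.
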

Classic $L^r-L^q$ estimates, $1\leq r\leq q<\infty$ are obtained by Theorem~\ref{thm:main}, setting~$r_1=r_2$.
\begin{corollary}\label{cor:main}
Let~$\mu\in\R$. Let~$n=1,2,3$ and~$q\in(1,\infty)$, or~$n\geq4$ and~$q$ as in~\eqref{eq:qrange}. Fix~$r\in(1,q]$ such that~$d(r,q)\leq1$.

If~$\mu\neq1$ and~$d(r,q)\leq \max\{ \mu,2-\mu\}/2$, then the solution to~\eqref{eq:CPlinear} verifies the following~$L^r-L^q$ estimate:
\begin{equation}\label{eq:rqdecaygood}
\|v(t,\cdot)\|_{L^q} \leq C\,s^{\min\{1,\mu\}}\,t^{(1-\mu)_+-n\left(\frac1{r}-\frac1q\right)}\, \|v_1\|_{L^{r}} \,,
\end{equation}
%
%\begin{equation}\label{eq:rqdecaygood}
%\|v(t,\cdot)\|_{L^q} \leq C\,s\,t^{-n\left(\frac1{r}-\frac1q\right)}\, \|v_1\|_{L^{r}} \,,
%\end{equation}
%
for some~$C>0$, independent of~$s,t$, if~$\mu\neq1$. If~$\mu\neq1$ and~$d(r,q)>\max\{\mu,2-\mu\}/2$, then the solution to~\eqref{eq:CPlinear} verifies the following~$L^r-L^q$ estimate:
\begin{equation}\label{eq:rqdecaybad}
\|v(t,\cdot)\|_{L^q} \leq C\,s^{1-d(r,q)+\frac\mu2}\,t^{-n\left(\frac1{r}-\frac1q\right)+d(r,q)-\frac\mu2}\, \|v_1\|_{L^{r}}\,.
\end{equation}
If~$\mu=1$, estimates~\eqref{eq:rqdecaygood} and~\eqref{eq:rqdecaybad} remain valid, replacing~$\|v_1\|_{L^{r}}$ by~$(1+\log(t/s))\,\|v_1\|_{L^{r}}$. %
%If~$\mu\in(0,1)$ and~$d(r,q)\leq 1-\mu/2$, %or~$\mu\in(-\infty,0)$ and~$d(r,q)\leq 1$,
%then the solution to~\eqref{eq:CPlinear} verifies the following~$L^r-L^q$ estimate:
%
%\begin{equation}\label{eq:rqdecaygoodsharp}
%\|v(t,\cdot)\|_{L^q} \leq C\,s^{\mu}\,t^{1-\mu-n\left(\frac1{r}-\frac1q\right)}\, \|v_1\|_{L^{r}} \,,
%\end{equation}
%
For any~$\eps>0$, the above results remains valid, for~$C=C(\eps)$, if we replace~$d(r,q)$ by~$d(1,q)+\eps$ when~$r=1$.
\end{corollary}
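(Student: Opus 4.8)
The plan is to specialize Theorem~\ref{thm:main} to the diagonal case~$r_1=r_2=r$, so that both contributions on the right-hand side of~\eqref{eq:gendecay} carry the same norm~$\|v_1\|_{L^r}$, and then to decide, according to the sign of the exponent~$d(r,q)-\max\{\mu,2-\mu\}/2$, which of the two terms dominates. No genuinely new analysis is required: the whole content is the algebraic reconciliation of the powers of~$s$ and~$t$ delivered by Theorem~\ref{thm:main} with those claimed in~\eqref{eq:rqdecaygood} and~\eqref{eq:rqdecaybad}. First I would record the two elementary identities that drive everything, each checked by splitting into the cases~$\mu\geq1$ and~$\mu<1$: namely~$\max\{\mu,2-\mu\}/2=\mu/2+(1-\mu)_+$, which gives~$(1-\mu)_+-\max\{\mu,2-\mu\}/2=-\mu/2$ and~$\min\{1,\mu\}+\max\{\mu,2-\mu\}/2=1+\mu/2$.

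Consider first the regime~$d(r,q)\leq\max\{\mu,2-\mu\}/2$. Here the positive part in the first term of~\eqref{eq:gendecay} vanishes, so that contribution is exactly~$C_1\,s^{\min\{1,\mu\}}\,t^{(1-\mu)_+-n(1/r-1/q)}$, which is~\eqref{eq:rqdecaygood}. It remains to absorb the second term into the first. Writing the second term as~$C_2\,s^{1-d(r,q)+\mu/2}\,t^{-n(1/r-1/q)+d(r,q)-\mu/2}$ and dividing by the first, the resulting exponents of~$s$ and~$t$ are opposite: their sum equals~$1-\min\{1,\mu\}-(1-\mu)_+=0$ (again by case inspection). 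Hence the ratio reduces to~$(t/s)^{d(r,q)-\mu/2-(1-\mu)_+}=(t/s)^{d(r,q)-\max\{\mu,2-\mu\}/2}$. Since~$t\geq s$ and the exponent is~$\leq0$ precisely by the case hypothesis, the second term is controlled by the first, establishing~\eqref{eq:rqdecaygood}.

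Consider next the regime~$d(r,q)>\max\{\mu,2-\mu\}/2$. Now the positive part is active, and invoking the two identities above the first term of~\eqref{eq:gendecay} collapses to~$C_1\,s^{1-d(r,q)+\mu/2}\,t^{-n(1/r-1/q)+d(r,q)-\mu/2}$, i.e.\ exactly~\eqref{eq:rqdecaybad}. The second term of~\eqref{eq:gendecay} already carries the identical powers of~$s$ and~$t$, so the two simply add and give~$(C_1+C_2)$ times~\eqref{eq:rqdecaybad}. The logarithmic correction in the case~$\mu=1$, as well as the replacement of~$d(r,q)$ by~$d(1,q)+\eps$ when~$r=1$, are inherited verbatim from Theorem~\ref{thm:main}, so nothing further is needed for those sub-cases.

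The only ``obstacle'' is bookkeeping: keeping the positive part, the $\min$, the $\max$, and the two regimes of~$\mu$ aligned so that the $s$- and $t$-exponents coincide \emph{on the nose} with those in~\eqref{eq:rqdecaygood} and~\eqref{eq:rqdecaybad}. Since, in the end, everything rests on the monotonicity~$(t/s)^{\alpha}\leq1$ for~$\alpha\leq0$ and~$t\geq s$, there is no analytic difficulty beyond this exponent arithmetic.
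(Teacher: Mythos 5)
Your proposal is correct and follows exactly the route the paper intends: the paper derives Corollary~\ref{cor:main} simply by setting $r_1=r_2$ in Theorem~\ref{thm:main} (stating this without further detail), and your bookkeeping identities $\max\{\mu,2-\mu\}/2=\mu/2+(1-\mu)_+$ and $\min\{1,\mu\}+\max\{\mu,2-\mu\}/2=1+\mu/2$ correctly show that in the regime $d(r,q)\leq\max\{\mu,2-\mu\}/2$ the second term of~\eqref{eq:gendecay} is absorbed by the first via $(t/s)^{d(r,q)-\max\{\mu,2-\mu\}/2}\leq1$, while in the opposite regime the first term collapses to the same powers as the second, yielding~\eqref{eq:rqdecaygood} and~\eqref{eq:rqdecaybad} respectively. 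The logarithmic case $\mu=1$ and the replacement $d(1,q)+\eps$ for $r=1$ are indeed inherited directly, so nothing further is needed.
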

\begin{remark}
The condition~$d(r,q)\leq1$ (or~$d(1,q)<1$ if~$r=1$) in Corollary~\ref{cor:main} is necessary and sufficient to obtain $L^r-L^q$ estimates for the wave equation with no damping (see later, Lemma~\ref{lem:Miy}). In particular, assumption~\eqref{eq:qrange} is equivalent to~$d(q,q)\leq1$, the condition for the~$L^q$ boundedness of the solution operator for the wave equation without damping~\cite{Peral}.

Setting~$\mu=0$ in~\eqref{eq:rqdecaygood}, the estimate is consistent with the classical $L^r-L^q$ estimate for the wave equation:
\[ \|v(t,\cdot)\|_{L^q} \leq C\,t^{1-n\left(\frac1{r}-\frac1q\right)}\, \|v_1\|_{L^{r}}\,. \]
%
%This relation is due to the fact that at high frequencies, the solution to~\eqref{eq:CPlinear} may be subdivided in terms with the wave behavior.
However, when~$\mu>0$ the presence of the damping term has a twofold benefit on the decay estimates for~\eqref{eq:CPlinear}. On the one hand, it produces additional decay rate. On the other hand, this decay rate may be enhanced replacing $L^r-L^q$ estimates by $(L^{r_1}\cap L^{r_2})-L^q$ estimates. More precisely, the decay rate obtained by Theorem~\ref{thm:main} is better than the one in Corollary~\ref{cor:main}, if both $d(r_1,q)$ and~$\max\{\mu,2-\mu\}/2$ are greater than~$1$, in view of the bound~$d(r_2,q)\leq1$. In this way, benefits on the decay rate may be obtained mixing the~$L^{r_1}$ regularity for the data at intermediate frequencies, with the~$L^{r_2}$ regularity for the data at high frequencies. This interplay plays a crucial role in obtaining sharp results for the semilinear problem~\eqref{eq:CP} when~$n\geq2$.
\end{remark}
Corollary~\ref{cor:main} is sufficient to prove Theorem~\ref{thm:main1}, in view of the fact that~$d(1,q)<1$ for any~$q\in(1,\infty)$. However, in space dimension~$n=2$, the bound~$d(1,q)<1$ is violated for~$q\geq2$. Moreover, in space dimension~$n\geq3$, the condition~$d(1,q)<1$ holds for no~$q$. For this reason, we take advantage of the $(L^{r_1}\cap L^{r_2})-L^q$ estimates in Theorem~\ref{thm:main}, in which we fix~$d(r_2,q)=1$ and we take~$r_1=1$.
\begin{corollary}\label{cor:main2}
Let~$\mu\geq2$. Let~$n=2$ and~$q\in(2,6]$, or~$n=3$ and~$q\in(1,4]$, or~$n\geq4$ and
\begin{equation}\label{eq:qrangeprime}
\frac{2(n-1)}{n+1}\leq q\leq \frac{2(n+1)}{n-1}.
\end{equation}
Then there exists~$r_2\in(1,\min\{q,q'\}]$ such that~$d(r_2,q)=1$ and the solution to~\eqref{eq:CPlinear} verifies the following $(L^1\cap L^{r_2})-L^q$ decay estimate
\begin{equation}\label{eq:gendecaygood}
\|v(t,\cdot)\|_{L^q} \leq C\,s\,t^{-n\left(1-\frac1q\right)}\, \big(\|v_1\|_{L^1} + s^{\frac{n-1}2-\frac1q}\,\|v_1\|_{L^{r_2}}\big) \,,
\end{equation}
if~$\mu>n+1-2/q$, and for any~$\eps>0$ verifies the $(L^1\cap L^{r_2})-L^q$ estimate
\begin{equation}\label{eq:gendecaybad}
\|v(t,\cdot)\|_{L^q} \leq C(\eps)\,s^{\frac\mu2-\eps}\,t^{\eps-(n-1)\left(\frac12-\frac1q\right)-\frac\mu2}\,\big(s^{-\frac{n-1}2+\frac1q}\, \|v_1\|_{L^1} + \|v_1\|_{L^{r_2}}\big) \,,
\end{equation}
if~$\mu\leq n+1-2/q$.
\end{corollary}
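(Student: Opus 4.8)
The plan is to obtain Corollary~\ref{cor:main2} as a single specialization of Theorem~\ref{thm:main}, choosing $r_1=1$ (which is admissible via the last sentence of the theorem, replacing $d(1,q)$ by $d(1,q)+\eps$) together with the unique $r_2$ that saturates the admissibility bound, namely $d(r_2,q)=1$. Since I intend $r_2\le q'$, I read $d(r_2,q)$ off the first branch of~\eqref{eq:drq}, so that $d(r_2,q)=1$ becomes $n/r_2-(n-1)/2-1/q=1$, i.e.\ $1/r_2=(n+1)/(2n)+1/(nq)$.

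First I would check that this $r_2$ lands in $(1,\min\{q,q'\}]$; this is exactly the content of the hypotheses on $q$. The requirement $r_2>1$ is equivalent to $q>2/(n-1)$, which for $n=2$ forces $q>2$, for $n=3$ forces $q>1$, and for $n\ge4$ is already implied by the lower bound in~\eqref{eq:qrangeprime}. The requirement $r_2\le\min\{q,q'\}$ unwinds (distinguishing whether $q\le2$ or $q\ge2$) to $q\le 2(n+1)/(n-1)$, giving the upper endpoints $q\le6$ for $n=2$, $q\le4$ for $n=3$, and the upper bound in~\eqref{eq:qrangeprime} for $n\ge4$. This bookkeeping is the one genuinely fiddly point, since the stated $q$-intervals for $n=2,3$ are precisely these solvability windows and not the generic formula~\eqref{eq:qrangeprime}.

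With $r_1=1$ and this $r_2$ fixed, I substitute into~\eqref{eq:gendecay} and simplify using $\mu\ge2$, so that $\min\{1,\mu\}=1$, $(1-\mu)_+=0$ and $\max\{\mu,2-\mu\}=\mu$. The first ($L^1$) term collapses to $C_1\,s\,t^{-n(1-1/q)}\,(t/s)^{(d(1,q)+\eps-\mu/2)_+}\|v_1\|_{L^1}$, while the second ($L^{r_2}$) term, using $d(r_2,q)=1$ together with the identity $-n(1/r_2-1/q)+1=(n-1)(1/q-1/2)$ recorded after~\eqref{eq:drq}, becomes $C_2\,s^{\mu/2}\,t^{-(n-1)(1/2-1/q)-\mu/2}\|v_1\|_{L^{r_2}}$. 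The dichotomy in the statement is then governed by the sign of $d(1,q)-\mu/2$, i.e.\ by whether $\mu> n+1-2/q=2d(1,q)$ or not.

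In the range $\mu>n+1-2/q$ one has $d(1,q)-\mu/2<0$, so for $\eps$ small the plus-part in the first term vanishes and that term reproduces exactly the $L^1$ contribution of~\eqref{eq:gendecaygood}; for the second term I factor out the scaling rate $s\,t^{-n(1-1/q)}$ and reduce the remaining claim to $(t/s)^{d(1,q)-\mu/2}\le1$, which holds because $t\ge s$ and the exponent is negative, yielding the $L^{r_2}$ part with its weight $s^{(n-1)/2-1/q}$. In the complementary range the plus-part equals $d(1,q)+\eps-\mu/2\ge0$, and distributing the $(t/s)$-power turns the first term into $C(\eps)\,s^{\mu/2-\eps}\,s^{-(n-1)/2+1/q}\,t^{\eps-(n-1)(1/2-1/q)-\mu/2}\|v_1\|_{L^1}$, matching the $L^1$ part of~\eqref{eq:gendecaybad}; the second term already carries the rate $t^{-(n-1)(1/2-1/q)-\mu/2}$, and the harmless factor $(s/t)^{\eps}\le1$ lets it be absorbed under the common prefactor $C(\eps)\,s^{\mu/2-\eps}\,t^{\eps-(n-1)(1/2-1/q)-\mu/2}$. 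The main obstacle is thus combinatorial rather than analytic: keeping the four competing powers of $s$ and $t$ aligned across the case split, and confirming that the solvability window for $r_2$ coincides exactly with the hypotheses imposed on $q$.
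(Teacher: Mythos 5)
Your proposal is correct and follows essentially the same route as the paper: specialize Theorem~\ref{thm:main} with $r_1=1$ (using the $d(1,q)+\eps$ replacement) and $r_2$ determined by $d(r_2,q)=1$, rewrite the $L^{r_2}$ term as $s^{\mu/2}t^{-(n-1)(1/2-1/q)-\mu/2}$ via the identity following~\eqref{eq:drq}, and split according to the sign of $d(1,q)-\mu/2$, i.e.\ whether $\mu\gtrless n+1-2/q$, absorbing $(s/t)^{\eps}\leq1$ in the second case. The only cosmetic difference is that you solve explicitly $1/r_2=(n+1)/(2n)+1/(nq)$ and verify the solvability window for $q$ directly, whereas the paper infers the existence of $r_2\in(1,\min\{q,q'\}]$ from $d(q,q)\leq1$ and $d(q',q)\leq1$ (which also records that~\eqref{eq:qrangeprime} implies~\eqref{eq:qrange} for $n\geq4$, a one-line check your argument uses implicitly when invoking Theorem~\ref{thm:main}).
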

For the ease of reading, we provide the straightforward proof of Corollary~\ref{cor:main2}.
\begin{proof}
First of all, we notice that~$d(q,q)\leq1$, since~\eqref{eq:qrangeprime} implies~\eqref{eq:qrange}, if~$n\geq4$. On the other hand, the right-hand bound~$q\leq2(n+1)/(n-1)$ guarantees that~$d(q',q)\leq1$ when~$q\geq2$. As a consequence, there exists~$r_2\in(1,\min\{q,q'\}]$ such that~$d(r_2,q)=1$.

Since~$d(r_2,q)=1$ and~$r_2\leq q'$, we may replace
\[ s\,t^{-n\left(\frac1{r_2}-\frac1q\right)}(t/s)^{d(r_2,q)-\frac\mu2} = s^{\frac\mu2}\,t^{-(n-1)\left(\frac12-\frac1q\right)-\frac\mu2}, \]
in~\eqref{eq:gendecay}. Moreover, if~$\mu>n+1-2/q$, we may estimate
\[ s^{\frac\mu2}\,t^{-(n-1)\left(\frac12-\frac1q\right)-\frac\mu2} \leq s^{\frac{n+1}2-\frac1q}\,t^{-n\left(1-\frac1q\right)}. \]
Therefore, by~\eqref{eq:gendecay} we derive~\eqref{eq:gendecaygood}. On the other hand, if~$2\leq\mu\leq n+1-2/q$, we immediately obtain~\eqref{eq:gendecaybad}, using~$(s/t)^\eps\leq1$.
\end{proof}
\begin{remark}\label{rem:change}
It is sufficient to prove Theorem~\ref{thm:main} for~$\mu\geq1$. Indeed, let~$\mu\in(-\infty,1)$ in~\eqref{eq:CPlinear}. If we define
\begin{equation}\label{eq:musharp}
v^\sharp(t,x)=t^{\mu-1}\,v(t,x)\,, \qquad \text{and} \quad \mu^\sharp=2-\mu\,,
\end{equation}
then Cauchy problem~\eqref{eq:CPlinear} becomes
\begin{equation}\label{eq:CPlinearsharp}
\begin{cases}
v_{tt}^\sharp- \triangle v^\sharp + \dfrac{\mu^\sharp}{t}\,v_t^\sharp = 0, & t>s\,, \ x\in\R^n\,,\\
v^\sharp(s,x)=0, \quad v_t^\sharp(s,x)=s^{1-\mu}\,v_1(x).
\end{cases}
\end{equation}
Applying Theorem~\ref{thm:main} to~\eqref{eq:CPlinearsharp} with~$\mu^\sharp>1$, we obtain the statement of Theorem~\ref{thm:main} for~$\mu<1$.
\end{remark}

\subsection{The fundamental solution to~\eqref{eq:CPlinear}}

For the ease of reading, we divide the proof of Theorem~\ref{thm:main} in steps. We mention that some $L^r-L^q$ estimates have been previously obtained in~\cite{W05}, but we need a complete~$(r,q)$ range of estimates, with a dependence on the parameter~$s$, to apply them to the semilinear problems~\eqref{eq:CPsing} and~\eqref{eq:CP}.

Let~$K(t,s)$ be the fundamental solution to~\eqref{eq:CPlinear}. The Fourier transform of~$K(t,s)$ with respect to the space variable solves the problem
\begin{equation}\label{eq:CPK}
\begin{cases}
\hat K_{tt}+\xii^2\hat K + \dfrac\mu{t}\,\hat K_t= 0, & t>s,\\
\hat K(s,s)=0\,, \quad \hat K_t(s,s)=1\,.
\end{cases}
\end{equation}
The equation in~\eqref{eq:CPK} is \emph{scale-invariant}, namely, if we set
\[ \tau=t\xii, \quad \sigma=s\xii,\quad w(t\xii)=\hat K(t,s),\]
we find the equivalent problem
\begin{equation}
\label{eq:scale}
\begin{cases}
w'' + w + \dfrac\mu\tau \, w' =0, \qquad \tau\geq \sigma\,,\\
w(\sigma)=0\,, \quad w'(\sigma)=\xii^{-1}\,.
\end{cases}
\end{equation}
If we put~$\nu \doteq (\mu-1)/2$ and~$y(\tau)=\tau^\nu\,w(\tau)$, then
%
%\[ y'=\nu\tau^{\nu-1}w+\tau^\nu w',\qquad y''=\nu(\nu-1)\tau^{\nu-2}w+2\nu\tau^{\nu-1}w'+\tau^\nu w'', \]
%
%so that
%
%\[ \tau^2 y'' + \tau y' + (\tau^2-\nu^2) y = \tau^\nu\big(w''+(2\nu+1)w'\tau^{-1}+w\big)=0. \]
%
from~\eqref{eq:scale} we obtain the Cauchy problem for the Bessel's differential equation of order~$\pm\nu$:
\begin{equation}\label{eq:yBess}
\begin{cases}
\tau^2 y'' + \tau y' + (\tau^2-\nu^2) y =0 \,, \qquad \tau\geq\sigma\,,\\
y(\sigma)=0,\quad y'(\sigma)=s\,\sigma^{\nu-1}.
\end{cases}
\end{equation}
We assume that~$\nu>0$ is not integer, that is, $\mu>1$ is not an odd integer. Then a system of linearly independent solutions to~\eqref{eq:yBess} is given by the pair of Bessel functions (of first kind) $J_{\pm\nu}(\tau)$, hence we put
\[ y = C_+(\sigma)\,J_\nu(\tau) + C_-(\sigma)\,J_{-\nu}(\tau).\]
The definition of Bessel functions by series is
\[ J_\rho(z)= \sum_{m=0}^\infty \frac{(-1)^m}{m!\Gamma(m+\rho+1)}\,(z/2)^{2m+\rho}. \]
We postpone the case when~$\nu$ is not an integer to Section~\ref{sec:integer}. In that case, we use a different system of linearly independent solutions to~\eqref{eq:yBess}. However, only minor changes appear, unless~$\nu=0$, that is, $\mu=1$.

Imposing the initial conditions
\[\begin{cases}
C_+ J_\nu(\sigma) + C_- J_{-\nu}(\sigma)=0,\\
C_+ J_\nu'(\sigma) + C_- J_{-\nu}'(\sigma)=s\sigma^{\nu-1},
\end{cases}\]
and recalling that the Wronskian satisfies~\cite[\textsection 3.12]{Watson}
\[ W[J_\nu,J_{-\nu}](\sigma)=J_\nu(\sigma)J_{-\nu}'(\sigma)-J_\nu'(\sigma)J_{-\nu}(\sigma)=\frac{-2\sin(\nu\pi)}{\pi\sigma}, \]
we derive
\[ y = \frac{\pi}{2\sin(\nu\pi)}\, \big( J_{-\nu}(\sigma)J_\nu(\tau)-J_\nu(\sigma)J_{-\nu}(\tau)\big)\,s\,\sigma^{\nu}, \]
so that, replacing~$\sigma=s\xii$ and~$\tau=t\xii$, we find
\[ \hat K(t,s)=\frac{\pi}{2\sin(\nu\pi)}\, \big( J_{-\nu}(s\xii)J_\nu(t\xii)-J_\nu(s\xii)J_{-\nu}(t\xii)\big)\,s^{\nu+1}\,t^{-\nu}. \]
We now want to estimate the multiplier norm~\eqref{eq:multnorm} of~$\hat K(t,s)$, depending on both~$s,t$, after localizing it.

It is clear that~$\hat K(t,s)\in L^\infty=M_2^2$, for any~$t\geq s$. Fix~$a=s/t\in(0,1]$. By homogeneity, for any~$t>0$ it holds
\begin{equation}\label{eq:scaling}
\|\hat K(t,s)\chi_j^2(t\xii)\|_{M_r^q} = s\,t^{-n\left(\frac1r-\frac1q\right)}\,\|\hat K_a\chi_j^2\|_{M_r^q},
\end{equation}
where~$\chi_j$ is a localizing function which will be fixed later, and
\begin{equation}\label{eq:Ka}
\hat K_a(\xi)=s^{-1}\,\hat K(1,a)=\frac{\pi}{2\sin(\nu\pi)}\,a^{\nu}\, \big( J_{-\nu}(a\xii)J_\nu(\xii)-J_\nu(a\xii)J_{-\nu}(\xii)\big).
\end{equation}
In order to take into account of the influence from the parameter~$a$, we fix three localizing functions~$\chi_0,\chi_1,\chi_2\in\mathcal C^\infty$, with the following properties:
\begin{itemize}
\item $\chi_0(\xi)=1$ for~$\xii\leq1/2$, and~$\chi_0$ is supported in the ``low frequencies zone'' $\{\xii\leq1\}$;
\item $\chi_2(\xi)=1$ for~$a\xii\geq2$, and $\chi_2$ is supported in the ``high frequencies zone''~$\{a\xii\geq1\}$;
\item it holds
\[ 1 = \chi_0^2 + \chi_1^2 + \chi_2^2; \]
in particular, $\chi_1$ is supported in the ``intermediate frequencies zone''~$\{1/2\leq\xii\leq 2a^{-1}\}$.
\end{itemize}
To carry over our analysis at intermediate and high frequencies, we will use the asymptotic expansion (see~\cite[\textsection 7.21]{Watson}) of the Bessel functions~$J_{\pm\nu}(z)$ for large values of~$z$,
\begin{equation}\label{eq:Jasymp}
\begin{split}
J_{\pm\nu}(z)
    & = (z\pi/2)^{-\frac12}\,\cos(z\mp\nu\pi/2-\pi/4)\sum_{m=0}^\infty (-1)^m(\nu,2m)(2z)^{-2m} \\
    &\qquad - (z\pi/2)^{-\frac12}\,\sin (z\mp\nu\pi/2-\pi/4)\sum_{m=0}^\infty (-1)^m(\nu,2m+1)(2z)^{-2m-1}.
\end{split}
\end{equation}
and the following multiplier theorem.
\begin{lemma}[see Theorem 4.2 in~\cite{Miyachising} and the references therein]\label{lem:Miy}
Let
\[ m(\xi) = \psi(\xii)\,\xii^{-k} e^{\pm i\xii} \]
where~$k>0$ and~$\psi\in\mathcal C^\infty$ vanishes near the origin and is~$1$ for large values of~$\xii$. Take~$d(r,q)$ as in~\eqref{eq:drq}. Then~$m\in M_r^q$ if, and only if, $d(r,q)\leq k$ when~$1<r\leq q<\infty$, and if, and only if, $d(r,q)<k$, when~$r=1\leq q\leq\infty$ or~$1\leq r\leq q=\infty$.
\end{lemma}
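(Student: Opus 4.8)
This is the cited theorem of Miyachi; here is how I would reconstruct its proof. Since $T_m$ is convolution with the kernel $K=\mathfrak F^{-1}m$, the claim is an $L^r$--$L^q$ bound for this convolution, and the first move is a reduction by duality. As $T_m^*=T_{\bar m}$, one has $m\in M_r^q$ if and only if $\bar m\in M_{q'}^{r'}$; since $\bar m$ is of the same type and a direct computation from~\eqref{eq:drq} gives $d(q',r')=d(r,q)$, it suffices to treat the regime $r\leq q'$, where $\min\{r,q'\}=r$ and $d(r,q)=\frac nr-\frac{n-1}2-\frac1q$. I would discard the part of $\psi$ near the origin, which is a smooth compactly supported symbol, harmless on every $L^r$--$L^q$ pair, and decompose the tail dyadically as $m=\sum_{j\geq0}m_j$, with $m_j(\xi)=\phi(\xii/2^j)\,\xii^{-k}e^{\pm i\xii}$ supported in $\xii\sim2^j$.

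For the single-scale analysis I would rescale $\xi=2^j\eta$ and apply stationary phase to the phase $x\cdot\eta\pm\etta$. The stationary set is the unit sphere $\abs{x}=1$, and the Hessian in $\eta$ is nondegenerate in the $n-1$ spherical directions, so the kernel $K_j$ is concentrated in an $O(2^{-j})$ neighbourhood of $\{\abs{x}=1\}$, with amplitude $\sim2^{j((n+1)/2-k)}$ there and rapid decay elsewhere. This yields $\|K_j\|_{L^\infty}\sim2^{j((n+1)/2-k)}$ and, since the relevant support has measure $\sim2^{-j}$, $\|K_j\|_{L^s}\sim2^{j((n+1)/2-k-1/s)}$ for all $1\leq s\leq\infty$.

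Young's inequality then gives the two corner bounds $\|T_{m_j}\|_{L^1\to L^\infty}=\|K_j\|_{L^\infty}$ and $\|T_{m_j}\|_{L^1\to L^1}=\|K_j\|_{L^1}$, while Plancherel gives $\|T_{m_j}\|_{L^2\to L^2}\sim2^{-jk}$; comparing with~\eqref{eq:drq}, each of these equals $2^{j(d(r,q)-k)}$ at the corresponding vertex $(1/r,1/q)\in\{(1,0),(1,1),(1/2,1/2)\}$. The region $\{r\leq q'\}$ is exactly the triangle with these three vertices, and $d(r,q)=\frac nr-\frac{n-1}2-\frac1q$ is affine in $(1/r,1/q)$ on it, so Riesz--Thorin interpolation yields $\|T_{m_j}\|_{L^r\to L^q}\lesssim2^{j(d(r,q)-k)}$ throughout. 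Summing the geometric series over $j\geq0$ converges precisely when $d(r,q)-k<0$, which establishes $m\in M_r^q$ whenever $d(r,q)<k$.

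The borderline case $d(r,q)=k$ is the delicate point and the main obstacle. There the sum of operator norms diverges only logarithmically, and the endpoint is recovered for $1<r\leq q<\infty$ by replacing the triangle inequality with a Littlewood--Paley square-function (orthogonality) argument, equivalently by real interpolation; this device is unavailable when $r=1$ or $q=\infty$, which is exactly why the statement permits equality only in the interior range. For the necessity direction I would test on a Knapp example, data whose Fourier transform is a bump on a cap of the sphere $\xii\sim2^j$, to show that the curvature term in~\eqref{eq:drq} cannot be improved, together with a single-dyadic-piece test to pin down the scaling term and the strictness at the endpoints. The two technical hearts are thus the precise single-scale kernel asymptotics from stationary phase --- controlling the degenerate radial direction and the exact profile near $\abs{x}=1$ --- and the orthogonality argument closing the critical case $d(r,q)=k$.
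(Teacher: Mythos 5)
The paper itself does not prove this lemma --- it is quoted from Theorem 4.2 in~\cite{Miyachising} (see also~\cite{Peral}) --- so your reconstruction has to be measured against the proof in those references. For the strict-inequality range your architecture matches it and is correct: the duality reduction to $r\leq q'$ (and your identity $d(q',r')=d(r,q)$ does check out against~\eqref{eq:drq}), the dyadic decomposition, the stationary-phase kernel bounds $\|K_j\|_{L^s}\lesssim 2^{j((n+1)/2-k-1/s)}$ coming from concentration on a shell of width $2^{-j}$ about the unit sphere, the three vertex estimates at $(1/r,1/q)\in\{(1,0),(1,1),(1/2,1/2)\}$ matching $2^{j(d(r,q)-k)}$, Riesz--Thorin across the triangle using the affinity of $d$ there, and geometric summation giving $m\in M_r^q$ whenever $d(r,q)<k$; the Knapp and focusing tests you propose are also the standard necessity examples.

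The genuine gap is the equality case $d(r,q)=k$ with $1<r\leq q<\infty$, which you dispatch with ``a Littlewood--Paley square-function (orthogonality) argument, equivalently real interpolation.'' Neither device works. At the endpoint each dyadic piece $T_{m_j}$ has operator norm comparable to $1$, so a square-function reduction merely re-encodes the same divergent sum (uniform vector-valued boundedness of the family $(T_{m_j})_j$ is essentially equivalent to the assertion being proved, not a tool for it), and real interpolation between any two estimates in the strict range stays strictly inside the region $d<k$ --- it can never reach the boundary line. What Miyachi (and Peral, for the diagonal case) actually do is replace the failing corner estimates on $L^1$ by Hardy-space estimates, namely $\mathcal H^1\to L^1$ (more generally $\mathcal H^p\to L^p$ with $p\leq1$) proved by atomic decomposition together with the kernel bounds, and then run Fefferman--Stein \emph{complex} interpolation on the analytic family $m_z=\psi(|\xi|)\,|\xi|^{-z}e^{\pm i|\xi|}$, in which the smoothing index moves with the interpolation parameter and lands exactly on the line $d(r,q)=k$; the discrepancy between $\mathcal H^1$ and $L^1$ is precisely why equality must be excluded when $r=1$ or $q=\infty$. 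This is not a cosmetic refinement here: the paper applies the lemma exactly in the equality case, with $k=d(r_2,q)=1$, in the high-frequency estimates and in Corollary~\ref{cor:main2}, so your sketch as written does not establish the statement in the form the paper actually uses.
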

%
%To deal with multipliers localized ``intermediate frequencies'', we will use the following obvious consequence of Lemma~\ref{lem:Miy}.
%%
%\begin{remark}\label{rem:Miy}
%If~$\psi_1\in\mathcal C_c^\infty$ and vanishes near the origin, it is sufficient to apply Lemma~\ref{lem:Miy} to~$(\psi_1+\psi_2)(\xii)\,\xii^{-k} e^{\pm i\xii}$ and to~$\psi_2(\xii)\,\xii^{-k} e^{\pm i\xii}$ to derive that~$m\in M_r^q$ if $d(r,q)\leq k$ when~$1<r\leq q<\infty$, and if $d(r,q)<k$, when~$r=1\leq q\leq\infty$ or~$1\leq r\leq q=\infty$.
%\end{remark}
%
We will also make use of Mikhlin-H\"ormander multiplier theorem in its simpler form: if~$|\partial_\xi^\beta m(\xi)|\leq C\xii^{-|\beta|}$ for any~$|\beta|\leq n/2+1$, then~$m\in M_q^q$ for any~$q\in(1,\infty)$.%, and of Hardy-Littlewood-Sobolev theorem:
%%
%\[ \| \mathfrak{F}^{-1}(\xii^{-n\left(\frac1r-\frac1q\right)}\,\hat f) \|_{L^q} \leq C\,\|f\|_{L^r}, \qquad 1<r<q<\infty. \]

\subsection{$L^r-L^q$ estimates at low frequencies}

Using the definition by series of the Bessel functions, it is known that $J_{\rho}(z)\sim (z/2)^\rho/\Gamma(1+\rho)$ as~$z\to0$. As a consequence,
\[ \|\hat K_a\chi_0^2\|_{M_2^2}=\|\hat K_a\chi_0^2\|_{L^\infty}\leq C,\]
with~$C>0$ independent of~$a$. %
%Using
%%
%\[ J_{\pm\nu}(z)\sim \frac1{\Gamma(1\pm\nu)} (z/2)^{\pm\nu},\qquad \Gamma(1+\nu)\Gamma(1-\nu)=\nu\Gamma(\nu)\Gamma(1-\nu)=\frac{\nu\pi}{\sin(\nu\pi)}\,, \]
%%
%for~$\xii\leq1$, we get
%%
%\[ \hat K_a(\xi) \sim \frac1{2\nu}\,a^{\nu} \, \big( a^{-\nu} - a^\nu \big) = \frac{1 - a^{2\nu}}{2\nu}. \]
%%
%Due to~$a\leq1$ and~$\nu$ positive, we deduce~$\hat K_a\approx 1-a$, so that $\|\chi_0^2\hat K_a\|_{M_2^2}\approx 1-a$.
Using $zJ_\rho'=-\rho J_\rho + z\,J_{\rho-1}$, so that
\[ \partial_{\xi_j} J_\rho(a\xii) = (a\,\xii)\frac{\xi_j}{\xii^2}\,J_\rho'(a\xii) = \frac{\xi_j}{\xii^2}\,\big(-\rho J_\rho(a\xii) +a\xii\,  J_{\rho-1}(a\xii) \big), \]
and similarly for~$\partial_{\xi_j} J_\rho(\xii)$, iterating, we derive $|\partial_\xi^\beta \hat K_a(\xi)| \leq C\,\xii^{-|\beta|}$, with~$C$ independent of~$a$, for any~$\beta\in\N^n$. Applying Mikhlin-H\"ormander theorem, it follows that~$\hat K_a\chi_0^2\in M_q^q$ for any~$q\in(1,\infty)$, and~$\|\hat K_a\chi_0^2\|_{M_q^q}$ is uniformly bounded with respect to~$a$. Using the estimates for the derivatives of~$\hat K_a$ and recalling that~$\chi_0\in\mathcal C_c^\infty$, by standard methods, it is easy to prove the pointwise estimates (see, for instance, Lemma~8 in~\cite{DAE20})
\[ |\mathfrak{F}^{-1}(\hat K_a\chi_0^2)(x)|\leq C(1+|x|)^{-n}, \]
which guarantees, by Young inequality,
\[ \|\hat K_a\chi_0^2\|_{M_r^q} \leq C\,\|\mathfrak{F}^{-1}(\hat K_a\chi_0^2)\|_{L^p} \leq C_1, \]
with~$C_1>0$, independent of~$a$ and~$1/q=1/r+1/p-1$, for any~$1\leq r<q<\infty$.

%% and Hardy-Littlewood-Sobolev theorem ($\chi_0\in\mathcal C_c^\infty$), it follows that~$\hat K_a\chi_0^2\in M_r^q$ for any~$1<r\leq q<\infty$, and~$\|\hat K_a\|_{M_r^q}$ is uniformly bounded with respect to~$a$.
%
%Let us consider the endpoint estimates, that is~$r=1$ and~$q\in(1,\infty)$. By standard methods, it is easy to prove the pointwise estimates: % (see, for instance, Lemma~8 in~\cite{DAE20})
%%
%\[ |\mathfrak{F}^{-1}(\hat K_a\chi_0^2)(x)|\leq C(1+|x|)^{-n}, \]
%%
%which guarantees, by Young inequality,
%%
%\[ \|\hat K_a\chi_0^2\|_{M_1^q} \leq C\,\|\mathfrak{F}^{-1}(\hat K_a\chi_0^2)\|_{L^q} \leq C_1, \]
%%
%with~$C_1>0$, independent of~$a$.

%On the other hand, due to
%%
%\[ |\hat K_a(\xi) - (1 - a^{2\nu})(2\nu)^{-1} | \leq C\,(a\xii)^2\leq C\xii^2, \]
%%
%we derive by standard methods
%%
%\[ \|\hat K_a\chi_0^2\|_{M_\infty^\infty}=\|\hat K_a\chi_0^2\|_{M_1^1} \leq C_1 + C_2 \|(\hat K_a(\xi) - (1 - a^{2\nu})(2\nu)^{-1})\chi_0^2\|_{M_1^1} \leq C_3. \]
%%
%Therefore, we proved that
%%
%\[ \|\hat K_a\chi_0^2\|_{M_r^q}\leq C,\]
%%
%for any~$1\leq r\leq q\leq\infty$, with~$C>0$, independent of~$a$.

\subsection{$L^{r_1}-L^q$ estimates at intermediate frequencies}

Now $1/2\leq\xii\leq 2a^{-1}$. We may estimate
\[ %J_{\pm\nu}(a\xii)\sim \frac1{\Gamma(1\pm\nu)}\, (a\xii/2)^{\pm\nu},\qquad
|\partial_\xi^\beta J_{\pm\nu}(a\xii)| \lesssim \xii^{-|\beta|}\, (a\xii/2)^{\pm\nu},\]
as we did at low frequencies, but now we use the asymptotic expansion~\eqref{eq:Jasymp} with~$z=\xii$ for~$ J_{\pm\nu}(\xii)$. %
%
%\begin{align*}
%J_{\pm\nu}(\xii)
%    & = (\xii\pi/2)^{-\frac12}\,\cos(\xii\mp\nu\pi/2-\pi/4)\sum_{m=0}^\infty (-1)^m(\nu,2m)(2\xii)^{-2m} \\
%    &\qquad - (\xii\pi/2)^{-\frac12}\,\sin (\xii\mp\nu\pi/2-\pi/4)\sum_{m=0}^\infty (-1)^m(\nu,2m+1)(2\xii)^{-2m-1}.
%\end{align*}
%
We split our analysis in two cases. First, let $1<r_1\leq q$ and $d(r_1,q)\leq \nu+1/2=\mu/2$. Proceeding as we did at low frequencies, by Mikhlin-H\"ormander theorem, recalling that~$\nu>0$, we obtain that
\begin{equation}\label{eq:lowinter}
\|J_{\pm\nu}(a\xii)\,(a\xii)^\nu\,\chi_1\|_{M_q^q}\leq C,
\end{equation}
with~$C$ independent of~$a$, for any~$q\in(1,\infty)$. On the other hand, %thanks to Lemma~\ref{lem:Miy} with~$k=\nu+1/2=\mu/2$ (see also Remark~\ref{rem:Miy}),
\[ \|\xii^{-\nu}\,J_{\pm\nu}(\xii)\,\chi_1\|_{M_{r_1}^q}\leq C, \]
with~$C$ independent of~$a$. % if~$1<r_1\leq q<\infty$, or~$d(r_1,q)<\mu/2$, if~$r_1=1\leq q\leq\infty$ or~$1\leq r_1\leq q=\infty$.
If~$1<r_1\leq q$ and~$d(r_1,q)>\mu/2$, we %shall modify our approach to recover the property~$\hat K_a\chi_1^2\in M_{r_1}^q$, but doing so will lead to a ``loss of decay rate'' represented by a negative power of the parameter~$a$ appearing in the estimate. Indeed, we now
apply Mikhlin-H\"ormander theorem to obtain that
\[ \|J_{\pm\nu}(a\xii)\,(a\xii)^{d(r_1,q)-\frac12}\,\chi_1\|_{M_q^q}\leq C, \]
whereas we %use Lemma~\ref{lem:Miy} with~$k=d(r_1,q)$ (see also Remark~\ref{rem:Miy}) to obtain
estimate
%
%\begin{equation}\label{eq:lowinter2}
\[ a^{\frac\mu2-d(r_1,q)}\,\|\xii^{-d(r_1,q)}\,J_{\pm\nu}(\xii)\,\chi_1\|_{M_{r_1}^q}\leq C\,a^{\frac\mu2-d(r_1,q)}\,.\]
%\end{equation}
%
Summarizing, we proved %that~$\hat K_a\chi_1^2\in M_{r_1}^q$ for any~$1<r_1\leq q<\infty$, with
the following estimate
\[ \|\chi_1^2 \hat K_a \|_{M_{r_1}^q} \leq C \,a^{-(d(r_1,q)-\frac\mu2)_+}\,, \]
where~$C$ is independent of~$a$, for~$1<r_1\leq q<\infty$. If~$r_1=1$ and~$q\in(1,\infty)$, we may proceed as in the first case above if~$d(1,q)<\mu/2$. Otherwise, we proceed as in the second case, but we replace~$d(1,q)$ with~$d(1,q)+\eps$ for some~$\eps>0$. %, using Lemma~\ref{lem:Miy} with~$k=d(1,q)+\eps>d(1,q)$.
\begin{remark}
We notice that the assumption~$\mu>1$, that is, $\nu>0$, is used in~\eqref{eq:lowinter}. For the general case of non integer~$\nu$, we should replace~\eqref{eq:lowinter} by
\begin{equation}\label{eq:lowintersharp}
\|J_{\pm\nu}(a\xii)\,(a\xii)^{|\nu|}\,\chi_1\|_{M_q^q}\leq C.
\end{equation}
%
%Consequently, we would obtain~$\xii^{-|\nu|}\,J_{\pm\nu}(\xii)\,\chi_1 \in M_{r_1}^q$ if, and only if, $d(r_1,q)\leq |\nu|+1/2$.
This modification, eventually, leads to estimate~\eqref{eq:gendecay} without the use of Remark~\ref{rem:change}. %in~\eqref{eq:gendecaysharp}.
\end{remark}

\subsection{$L^{r_2}-L^q$ estimates at high frequencies}

Now~$a\xii\geq1$. In this case, we use the asymptotic expansion~\eqref{eq:Jasymp} for all terms~$J_{\pm\nu}(\xii)$ and~$J_{\pm\nu}(a\xii)$ in~\eqref{eq:Ka} to derive
\[ \hat K_a(\xi) = \frac1{2\sin(\nu\pi)}\,a^{\nu-\frac12} \,\xii^{-1}\,T(a,\xii), \]
with
\begin{align*}
T(a,\xii)
    & = \big(\cos(a\xii+\nu\pi/2-\pi/4)S_0(a\xii)-\sin(a\xii+\nu\pi/2-\pi/4)S_1(a\xii)\big)\\
    & \qquad \times \big(\cos(\xii-\nu\pi/2-\pi/4)S_0(\xii)-\sin(\xii-\nu\pi/2-\pi/4)S_1(\xii)\big) \\
    & \qquad - \big(\cos(a\xii-\nu\pi/2-\pi/4)S_0(a\xii)-\sin(a\xii-\nu\pi/2-\pi/4)S_1(a\xii)\big)\\
    & \qquad \qquad \times \big( \cos(\xii+\nu\pi/2-\pi/4)S_0(\xii)-\sin(\xii+\nu\pi/2-\pi/4)S_1(\xii) \big),
\intertext{where}
S_0(z)
    & =\sum_{m=0}^\infty (-1)^m(\nu,2m)(2z)^{-2m}\,,\qquad S_1(z)=\sum_{m=0}^\infty (-1)^m(\nu,2m+1)(2z)^{-2m-1}\,.
\end{align*}
By addition formulas for the cosine function, we find the leading term%\footnote{We use
%
%\begin{align*}
%& \cos(\alpha+\beta)\cos(\gamma-\beta)-\cos(\alpha-\beta)\cos(\gamma+\beta)\\
%    & = (\cos\alpha\cos\beta-\sin\alpha\sin\beta)(\cos\gamma\cos\beta+\sin\gamma\sin\beta)\\
%    & \qquad - (\cos\alpha\cos\beta+\sin\alpha\sin\beta)(\cos\gamma\cos\beta-\sin\gamma\sin\beta)\\
%    & = 2\cos\alpha\cos\beta\sin\gamma\sin\beta - 2\sin\alpha\sin\beta\cos\gamma\cos\beta\\
%    & = \sin(2\beta)\,(\cos\alpha\sin\gamma - \sin\alpha\cos\gamma) = \sin(2\beta)\,\sin(\gamma-\alpha),
%\end{align*}
%
%with~$\alpha=a\xii-\pi/4$, $\beta=\nu\pi/2$, $\gamma=\xii+\nu\pi/2-\pi/4$.%, as well as
%%
%%\begin{align*}
%%& \sin(\alpha+\beta)\sin(\gamma-\beta)-\sin(\alpha-\beta)\sin(\gamma+\beta)\\
%%    & =(\sin\alpha\cos\beta+\cos\alpha\sin\beta)(\sin\gamma\cos\beta-\cos\gamma\sin\beta)\\
%%    & \qquad - (\sin\alpha\cos\beta-\cos\alpha\sin\beta)(\sin\gamma\cos\beta+\cos\gamma\sin\beta)\\
%%    & = - 2\sin\alpha\cos\gamma\sin\beta\cos\beta + 2 \cos\alpha\sin\gamma\sin\beta\cos\beta\\
%%    & = \sin(2\beta)\,(-\sin\alpha\cos\gamma+\cos\alpha\sin\gamma)=\sin(2\beta)\,\sin(\gamma-\alpha)
%%\end{align*}
%}
%
\[ T(a,\xii)\sim\sin(\nu\pi)\,\sin((1-a)\xii)\,\]
so that
\[ \hat K_a(\xi) = \frac{1-a}{2}\,a^{\nu-\frac12} \,\sinc((1-a)\xii) + \ldots, \]
where we omit the lower order terms in the expansion of~$T(a,\xii)$.

Let~$1<r_2\leq q$, with~$d(r_2,q)\leq1$. Multiplying~$\xii^{-1}\sin((1-a)\xii)$ by~$(a\xii)^{1-d(r_2,q)}$ and applying Lemma~\ref{lem:Miy} with~$k=d(r_2,q)$, we find that
\[ a^{1-d(r_2,q)}\,\|\xii^{-d(r_2,q)}\sin((1-a)\xii)\chi_2^2\|_{M_{r_2}^q} \leq C\,a^{1-d(r_2,q)}, \]
with~$C>0$, independent of~$a$. Due to~$\|(a\xii)^{d(r_2,q)-1}\chi_2^2\|_{M_q^q} \leq C$, as a consequence of Mikhlin-H\"ormander theorem and~$d(r_2,q)\leq1$, we find
\[ \|a^{\nu-\frac12} \,\xii^{-1}\,\sin((1-a)\xii)\chi_2^2\|_{M_{r_2}^q}\leq C\,a^{\frac\mu2-d(r_2,q)}\,, \]
where we replaced~$\nu+1/2=\mu/2$. We proceed similarly with the lower order terms appearing in the expansion of~$T(a,\xii)$, concluding that~$K_a\chi_2^2\in M_{r_2}^q$ if~$d(r_2,q)\leq1$, and
\[ \| \hat K_a\chi_2^2 \|_{M_{r_2}^q}\leq C\,a^{\frac\mu2-d(r_2,q)}\,, \]
with~$C>0$, independent of~$a$. If~$r_2=1$ and~$q\in(1,\infty)$, we may proceed as above if~$d(1,q)<1$, replacing~$d(1,q)$ with~$d(1,q)+\eps$ for any~$\eps\in(0,1-d(1,q)]$, and applying Lemma~\ref{lem:Miy} with~$k=d(1,q)+\eps>d(1,q)$.

\subsection{The case of nonnegative integer values of~$\nu$}\label{sec:integer}

If~$\nu$ is a nonnegative integer, that is, $\mu\in 2\N+1$, then we set
\[ y = C_+(\sigma)\,J_\nu(\tau) + C_-(\sigma)\,\mathbf Y_{\nu}(\tau).\]
where
\[ \mathbf Y_\nu=\lim_{k\to\nu} \frac{J_k-(-1)^\nu J_{-k}}{k-\nu}=\left(\partial_k J_k - (-1)^\nu \partial_k J_{-k}\right)_{k=\nu}, \]
is a Bessel function of second kind. %($\mathbf Y_\nu = \pi\, Y_n$ when~$\nu$ is an integer).
The Wronskian then satisfies~\cite[\textsection 3.63]{Watson} $W[J_\nu,\mathbf Y_\nu](\sigma)=2/\sigma$. Imposing the initial conditions
\[\begin{cases}
C_+ J_\nu(\sigma) + C_- \mathbf Y_{\nu}(\sigma)=0,\\
C_+ J_\nu'(\sigma) + C_- \mathbf Y_{\nu}'(\sigma)=s\sigma^{\nu-1},
\end{cases}\]
we derive
\[ y = -\frac12\,\big( \mathbf Y_{\nu}(\sigma)J_\nu(\tau)-J_\nu(\sigma)\mathbf Y_{\nu}(\tau)\big)\,s\,\sigma^{\nu}, \]
so that, replacing~$\sigma=s\xii$ and~$\tau=t\xii$, we find
\[ \hat K(t,s)= -\frac12\,\big( \mathbf Y_{\nu}(s\xii)J_\nu(t\xii)-J_\nu(s\xii)\mathbf Y_{\nu}(t\xii)\big)\,s^{\nu+1}\,t^{-\nu}. \]
Once again, we study~$\hat K_a$ where
\[ \hat K_a = \hat K(1,a) = -\frac{a^{\nu}}2\,\big( \mathbf Y_{\nu}(a\xii)J_\nu(\xii)- J_\nu(a\xii)\mathbf Y_{\nu}(\xii)\big). \]
The estimates at high frequencies are unchanged, due to the asymptotic expansion (see~\cite[\textsection 7.21]{Watson}):
\begin{align*}
\mathbf Y_{\nu}(z)
    & = (z/(2\pi))^{-\frac12}\,\sin (z-\nu\pi/2-\pi/4)\sum_{m=0}^\infty (-1)^m(\nu,2m)(2z)^{-2m} \\
    &\qquad - (z/(2\pi))^{-\frac12}\,\cos (z-\nu\pi/2-\pi/4)\sum_{m=0}^\infty (-1)^m(\nu,2m+1)(2z)^{-2m-1}.
\end{align*}
However, now
\[ \mathbf Y_0(z)\sim 2\log (z/2),\quad \mathbf Y_\nu(z)\sim -(\nu-1)!\,(z/2)^{-\nu},\quad \nu\in\N\setminus\{0\}, \]
as~$z\to0$, and similarly for their derivatives, using $\mathbf Y_\nu'=\nu z^{-1}\mathbf Y_\nu - \mathbf Y_{\nu+1}$. As a consequence, at low and intermediate frequencies we may still proceed as we did for the case of non-integer~$\nu$ if~$\nu\in\N\setminus\{0\}$. Therefore, let us consider only the case~$\nu=0$, that is, $\mu=1$. In this case, we shall take into account of the logarithmic term.

At low frequencies, for any~$\delta>0$, we easily prove the pointwise estimates (see, for instance, Lemma~8 in~\cite{DAE20})
\[ |\mathfrak{F}^{-1}(\hat K_a\chi_0^2)(x)|\leq C(1+|x|)^{\delta-n}, \]
which guarantees, by Young inequality, $\|\hat K_a\chi_0^2\|_{M_r^q} \leq C$ with~$C>0$, independent of~$a$, for any~$q\in(1,\infty)$ and~$r\in[1,q)$. To recover the estimate for~$r=q$, we notice that cancelations occur in~$\hat K_a$ when~$\nu=0$; indeed,
\[ \hat K_a \sim - \log(a\xii/2) + \log (\xii/2) = -\log a, \]
so that~$\|\hat K_a\chi_0^2\|_{M_q^q} \leq C(1-\log a)$. At intermediate frequencies, we do not have cancelations, but we use that~$(-\log (a\xii))\leq -\log a$, to get
\[ \| \mathbf{Y}_0(a\xii)\,\chi_1 \|_{M_q^q} \leq C(1-\log a),\]
when~$d(r_1,q)\leq1/2$, and
\[ \| \mathbf{Y}_0(a\xii)\,(a\xii)^{d(r_1,q)-1/2}\,\chi_1 \|_{M_q^q} \leq C(1-\log a),\]
when~$d(r_1,q)>1/2$.

\subsection{Unifying the estimates at different frequencies}

Using the estimates obtained for~$\hat K_a$ at low, intermediate and high frequencies, we can now prove Theorem~\ref{thm:main}.

We fix~$r_1,r_2,q$ as in the assumption of Theorem~\ref{thm:main}, and~$\mu>1$. Recalling~\eqref{eq:scaling}, we get
\begin{align*}
\|v(t,\cdot)\|_{L^q}
    & =\|K(t,s)\ast v_1\|_{L^q}\leq \sum_{j=0,1,2}\|\mathfrak{F}^{-1}(\hat K(t,s)\chi_j^2(t\xii))\ast v_1\|_{L^q} \\
    & \leq \sum_{j=0,1}\|\hat K(t,s)\tilde\chi_j^2(t\xii)\|_{M_{r_1}^q}\,\|v_1\|_{L^{r_1}} + \|\hat K(t,s)\tilde\chi_2^2(t\xii)\|_{M_{r_2}^q}\,\|v_1\|_{L^{r_2}}\\
    & = s\,t^{-n\left(\frac1{r_1}-\frac1q\right)}\sum_{j=0,1}\|\hat K_a\chi_j^2\|_{M_{r_1}^q}\,\,\|v_1\|_{L^{r_1}} + s\,t^{-n\left(\frac1{r_2}-\frac1q\right)} \|\hat K_a\chi_2^2\|_{M_{r_2}^q}\,\,\|v_1\|_{L^{r_2}}\\
    & \leq C\,s\,t^{-n\left(\frac1{r_1}-\frac1q\right)}\,(t/s)^{(d(r_1,q)-\frac\mu2)_+}\, \|v_1\|_{L^{r_1}} + C\,s\,t^{-n\left(\frac1{r_2}-\frac1q\right)}\,(t/s)^{d(r_2,q)-\frac\mu2}\,\|v_1\|_{L^{r_2}}\,.
\end{align*}
We replace~$d(r_j,q)$ by~$d(1,q)+\eps$, when~$r_j=1$. This concludes the proof of Theorem~\ref{thm:main} for~$\mu>1$. The case~$\mu=1$ is analogous, taking into account of the logarithmic term considered in Section~\ref{sec:integer}. The proof for~$\mu<1$ follows by using the change of variable in Remark~\ref{rem:change}.

\subsection{Energy estimates}

To deal with energy solutions for the semilinear problems~\eqref{eq:CPsing} and~\eqref{eq:CP}, we supplement the $(L^{r_1}\cap L^{r_2})-L^q$ estimates in Theorem~\ref{thm:main} with the following energy estimates.
\begin{proposition}\label{prop:energy}
Let~$n\geq1$ and~$\mu\in\R$. If~$v_1\in L^1\cap L^2$, then the solution to~\eqref{eq:CPlinear} verifies the following energy estimate:
\[ \|(\nabla v,v_t)(t,\cdot)\|_{L^2} \leq \begin{cases}
C\,s\,t^{-\frac{n}2-1}\big(\|v_1\|_{L^1}+s^{\frac{n}2}\,\|v_1\|_{L^2}\big) & \text{if~$\mu>n+2$},\\
C\,s\,t^{-\frac\mu2}\,(1+\log (t/s))\,\big(\|v_1\|_{L^1}+s^{\frac{n}2}\,\|v_1\|_{L^2}\big) & \text{if~$\mu=n+2$,}\\
C\,t^{-\frac\mu2}\,\big(s^{\frac{\mu-n}2}\,\|v_1\|_{L^1}+s^{\frac\mu2}\,\|v_1\|_{L^2}\big) & \text{if~$-n<\mu<n+2$, $\mu\neq1$,}\\
C\,t^{-\frac12}\,\big(s^{-\frac{n-1}2}\,(1+\log(t/s))\|v_1\|_{L^1}+s^{\frac\mu2}\,\|v_1\|_{L^2}\big) & \text{if~$\mu=1$,}
\end{cases} \]
where~$C>0$ is independent of~$t,s$.
\end{proposition}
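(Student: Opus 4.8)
The plan is to read the energy estimate off sharp pointwise bounds on the \emph{energy symbol}, exactly as the $L^r$–$L^q$ estimates were read off the multiplier norms of $\hat K$. Since $v=K(t,s)\ast v_1$, Plancherel's identity gives
\[ \|(\nabla v,v_t)(t,\cdot)\|_{L^2}=\|e(t,s,\cdot)\,\hat v_1\|_{L^2},\qquad e(t,s,\xi)\doteq\big(\xii^2|\hat K(t,s)|^2+|\hat K_t(t,s)|^2\big)^{1/2}. \]
I then reuse the partition $1=\chi_0^2+\chi_1^2+\chi_2^2$, grouping the low and intermediate blocks. On $\chi_0^2+\chi_1^2$ I bound
\[ \|e\,(\chi_0^2+\chi_1^2)\,\hat v_1\|_{L^2}\le\|e\,(\chi_0^2+\chi_1^2)\|_{L^2_\xi}\,\|\hat v_1\|_{L^\infty}\le\|e\,(\chi_0^2+\chi_1^2)\|_{L^2_\xi}\,\|v_1\|_{L^1}, \]
using $\|\hat v_1\|_{L^\infty}\le\|v_1\|_{L^1}$, whereas on $\chi_2^2$ I use $\|e\,\chi_2^2\,\hat v_1\|_{L^2}\le\|e\,\chi_2^2\|_{L^\infty_\xi}\,\|v_1\|_{L^2}$ (Plancherel again). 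This is precisely what produces the $L^1$ term and the $L^2$ term on the right-hand side, and reduces the whole statement to estimating $\|e\,(\chi_0^2+\chi_1^2)\|_{L^2_\xi}$ and $\|e\,\chi_2^2\|_{L^\infty_\xi}$.

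To compute these I exploit the scale invariance of~\eqref{eq:scale}. Setting $\tilde w\doteq\xii\hat K$, one checks that $\tilde w$ solves the normalised problem $\tilde w''+\tilde w+(\mu/\tau)\tilde w'=0$, $\tilde w(\sigma)=0$, $\tilde w'(\sigma)=1$, in the variables $\tau=t\xii$, $\sigma=s\xii$, and that $\xii\hat K=\tilde w(\tau)$, $\hat K_t=\tilde w'(\tau)$. Hence the symbol collapses to $e=\big(|\tilde w(\tau)|^2+|\tilde w'(\tau)|^2\big)^{1/2}$, a function of $(\tau,\sigma)$ alone. Writing $a=s/t\in(0,1]$ and substituting $\rho=t\xii$ (so $\sigma=a\rho$), polar coordinates give
\[ \|e\,\chi_2^2\|_{L^\infty_\xi}\sim\sup_{a\rho\ge1}e(\rho,a\rho),\qquad \|e\,(\chi_0^2+\chi_1^2)\|_{L^2_\xi}\sim t^{-n/2}\Big(\int_0^{1/a}e(\rho,a\rho)^2\,\rho^{n-1}\,d\rho\Big)^{1/2}, \]
so the full time dependence factors out and I am left with the one-variable profile $\rho\mapsto e(\rho,a\rho)$.

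The heart of the matter is the asymptotics of this profile, from the Bessel representation of $\hat K$: the power series of $J_{\pm\nu}$ at low ($\rho\lesssim1$) and intermediate ($1\lesssim\rho\lesssim1/a$, where $\sigma=a\rho\ll1$) frequencies, and the expansion~\eqref{eq:Jasymp} at high frequencies ($a\rho\gtrsim1$). The structural point—reflecting that $|\tilde w|^2+|\tilde w'|^2$ is the almost-conserved energy of the ODE, with $\frac{d}{d\tau}(|\tilde w|^2+|\tilde w'|^2)=-(2\mu/\tau)|\tilde w'|^2$—is that the oscillations of $\tilde w$ and $\tilde w'$ cancel in the sum of squares, leaving the clean powers
\[ e(\rho,a\rho)\sim a^{\mu/2}\ (a\rho\ge1),\qquad e(\rho,a\rho)\sim a\,\rho^{1-\mu/2}\ (1\le\rho\le1/a),\qquad e(\rho,a\rho)\lesssim a\ (\rho\le1), \]
which join continuously at $\rho\sim1/a$ (both give $a^{\mu/2}$), using $\nu+\tfrac12=\tfrac\mu2$ and $2\nu+1=\mu$. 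The high-frequency supremum is therefore $\sim a^{\mu/2}=(s/t)^{\mu/2}$, giving the $L^2$ term $s^{\mu/2}t^{-\mu/2}\|v_1\|_{L^2}$, which for $\mu\ge n+2$ is $\le(s/t)^{n/2+1}=s^{n/2}\cdot s\,t^{-n/2-1}$ (with equality at $\mu=n+2$), matching the first two cases. The low–intermediate integral is governed by $a^2\int_1^{1/a}\rho^{\,n+1-\mu}\,d\rho$, which equals up to constants $a^{\mu-n}$ for $\mu<n+2$, $a^2\log(1/a)$ for $\mu=n+2$, and $a^2$ for $\mu>n+2$; multiplying by $t^{-n}$ and taking the square root reproduces the $L^1$ coefficients $s^{(\mu-n)/2}t^{-\mu/2}$, $s\,t^{-n/2-1}(1+\log(t/s))^{1/2}\le s\,t^{-\mu/2}(1+\log(t/s))$, and $s\,t^{-n/2-1}$. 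The hypotheses $-n<\mu<n+2$ are exactly what make the intermediate integral dominate the low-frequency contribution $\sim a^{2\min\{1,\mu\}}$.

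It remains to treat the degenerate $\nu$. For $\nu<0$ ($\mu<1$) the low-frequency analysis uses $J_{\pm|\nu|}$, with the roles of the two Bessel functions exchanged at the origin; the intermediate and high-frequency powers are unchanged, so every such $\mu\in(-n,1)$ lands in the third case (alternatively one invokes Remark~\ref{rem:change}). For a positive integer $\nu$ one uses $\mathbf Y_\nu$ as in Section~\ref{sec:integer}, which has the same power behaviour at the origin as $J_{-\nu}$, so nothing changes; the genuinely special value is $\nu=0$, i.e. $\mu=1$, where $\mathbf Y_0(z)\sim\log(z/2)$ inflates the low- and intermediate-frequency bounds and upgrades the $L^1$ coefficient to $s^{-(n-1)/2}t^{-1/2}(1+\log(t/s))$, the fourth case. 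I expect the main obstacle to be obtaining all the profile bounds \emph{uniformly in $a$}: one must differentiate the merely asymptotic Bessel expansions to control $\tilde w'$, verify that the leading oscillatory parts of $\tilde w$ and $\tilde w'$ are exactly out of phase so that $|\tilde w|^2+|\tilde w'|^2$ is non-oscillatory, and glue the three regimes across the transition radii $\rho\sim1$ and $\rho\sim1/a$ without losing powers. The logarithmic bookkeeping in the cases $\mu=1$ and $\mu=n+2$, where the two distinct sources of logarithms—the singularity of $\mathbf Y_0$ and the borderline integral $\int\rho^{-1}\,d\rho$—appear, is the most delicate part.
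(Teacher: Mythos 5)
Your proposal is essentially the paper's own proof in lightly different clothing: the author likewise scales out $a=s/t$ (his~\eqref{eq:Enscaling} is your polar-coordinate factorization), bounds the low and intermediate blocks in $M_1^2$ by the $L^2_\xi$ norm of the symbol and the high block in $M_2^2$ by its $L^\infty_\xi$ norm, and evaluates the same three Bessel regimes, with the identical case integral $a^2\int_1^{1/a}\rho^{\,n+1-\mu}\,d\rho$ producing the three $L^1$ coefficients, the logarithms at $\mu=n+2$ and $\mu=1$, and the high-frequency $L^2$ term $s^{\mu/2}t^{-\mu/2}$ absorbed into the first two cases exactly as you do. Your ODE repackaging via $\tilde w$, $\tilde w'$ and the dissipation identity $\frac{d}{d\tau}(|\tilde w|^2+|\tilde w'|^2)=-(2\mu/\tau)|\tilde w'|^2$ is an equivalent, arguably cleaner form of the paper's bookkeeping with $(i\xi\hat K_a,\,-\nu\hat K_a+\xii\,m_a)$.

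One local claim should be corrected: for $\mu<1$ (i.e.\ $\nu<0$) the intermediate-zone power is \emph{not} unchanged. The dominant small-argument factor is then $J_\nu(a\rho)\sim(a\rho)^{-|\nu|}$, so the profile becomes $e\sim a^{\mu}\rho^{\mu/2}$ (the paper's exponent $-|\nu|+\tfrac12=\tfrac\mu2$ in its direct treatment of $-n<\mu<1$), not $a\,\rho^{1-\mu/2}$. Your conclusion survives untouched, because the two profiles agree at the matching radius $\rho\sim1/a$ (both equal $a^{\mu/2}$) and the integral $a^{2\mu}\int_1^{1/a}\rho^{\,\mu+n-1}\,d\rho$ is upper-endpoint dominated precisely when $\mu>-n$, again yielding $a^{\mu-n}$; note that the convergence condition in this regime is the lower bound $\mu>-n$ rather than $\mu<n+2$, complementing your comparison with the low-frequency block $a^{2\min\{1,\mu\}}$, which is correct as stated. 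Finally, two caveats on your fallback via Remark~\ref{rem:change}: the transformed datum should read $v_t^\sharp(s,x)=s^{\mu-1}v_1(x)$ (the exponent $1-\mu$ printed in the remark is inconsistent with the scaling homogeneity $F(\lambda t,\lambda s)=\lambda^{-n/2}F(t,s)$ of the stated estimate, and only $s^{\mu-1}$ reproduces the third case), and the energy does not transfer verbatim, since $v_t=(1-\mu)t^{-\mu}v^\sharp+t^{1-\mu}v_t^\sharp$ produces a commutator term requiring an additional $L^2$ decay bound on $v^\sharp$ itself; this is presumably why the paper handles $-n<\mu<1$ directly in the proof of the Proposition, as you can with the corrected profile.
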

For the sake of completeness, we give the straightforward proof of Proposition~\ref{prop:energy}.
\begin{proof}
We follow the proof of Theorem~\ref{thm:main}, but now we consider~$i\xi \hat K(t,s)$ and~$\partial_t\hat K(t,s)$. We notice that
\[ \partial_t\hat K(t,s)=-\dfrac\nu{t}\hat K(t,s) + \xii\,\frac{\pi}{2\sin(\nu\pi)}\, \big( J_{-\nu}(s\xii)J_\nu'(t\xii)-J_\nu(s\xii)J_{-\nu}'(t\xii)\big)\,s^{\nu+1}\,t^{-\nu}, \]
with~$\nu=(\mu-1)/2$ if~$\nu$ is non integer, and similarly using~$\mathbf Y_\nu$ for non-integer values of~$\nu$. Using homogeneity as in~\eqref{eq:scaling}, we now get
\begin{equation}\label{eq:Enscaling}
\|(i\xi,\partial_t)\hat K(t,s)\chi_j^2(t\xii)\|_{M_r^2} = s\,t^{-1-n\left(\frac1r-\frac12\right)}\,\|(i\xi\hat K_a,-\nu\hat K_a + \xii\,m_a)\chi_j^2\|_{M_r^2},
\end{equation}
where we put
\[ m_a=\frac{\pi}{2\sin(\nu\pi)}\, \big( J_{-\nu}(a\xii)J_\nu'(\xii)-J_\nu(a\xii)J_{-\nu}'(\xii)\big)\,a^{\nu}, \]
and similarly for non integer values of~$\nu$. The estimates for~$\|-\nu\,\hat K_a\|_{M_r^2}$ are obtained as in the proof of Theorem~\ref{thm:main}, so in the following we consider~$(i\xi\hat K_a,\xii\,m_a)$.

At low frequencies, it is sufficient to estimate
\[ \|(i\xi\hat K_a,\xii\,m_a)\chi_0^2\|_{M_1^2} \leq \|(i\xi\hat K_a,\xii\,m_a)\chi_0^2\|_{L^2}\leq C\left(\int_{\xii\leq1} \xii^2\,d\xi\right)^{\frac12}=C_1. \]
At intermediate frequencies, if~$\mu>1$, noticing that~$-\nu+1/2=1-\mu/2$, we estimate
\begin{align*}
\|(i\xi\hat K_a,\xii\,m_a)\chi_1^2\|_{M_1^2}
    & \leq C'\,\|\xii^{1-\frac\mu2}\chi_1^2\|_{L^2}\leq C_1\left(\int_{1/2\leq \xii\leq 2a^{-1}} \xii^{2-\mu}\,d\xi\right)^{\frac12} \\
    & \leq \begin{cases}
C & \text{if~$\mu>n+2$},\\
C\,(1-\log a) & \text{if~$\mu=n+2$,}\\
C\,a^{\frac\mu2-\frac{n}2-1} & \text{if~$1<\mu<n+2$.}
\end{cases}
\end{align*}
For~$\mu=1$, that is, $\nu=0$, we get
\[ \|(i\xi\hat K_a,\xii\,m_a)\chi_1^2\|_{M_1^2} \leq C'\,\|\xii^{\frac12}(-\log (a\xii))\chi_1^2\|_{L^2} \leq C\,a^{-\frac{n+1}2}\,(1-\log a). \]
If~$-n<\mu<1$, due to~$-|\nu|+1/2=\mu/2$, we obtain
\[ \|(i\xi\hat K_a,\xii\,m_a)\chi_1^2\|_{M_1^2}
    \leq C'\,a^{2\nu}\, \|\xii^{\frac\mu2}\chi_1^2\|_{L^2} \leq C_1\,a^{2\nu}\, \left(\int_{1/2\leq \xii\leq 2a^{-1}} \xii^{\mu}\,d\xi\right)^{\frac12} \\
    \leq C\,a^{-\frac{n+2-\mu}2}.
\]
Finally, at high frequencies, we simply estimate
\[ \|(i\xi\hat K_a,\xii\,m_a)\chi_2^2\|_{M_2^2} = \|(i\xi\hat K_a,\xii\,m_a)\chi_2^2\|_{L^\infty} \leq C\,a^{\frac{\mu}2-1}. \]
Unifying the estimates and using~\eqref{eq:Enscaling}, we derive
\begin{align*}
\|(\nabla,\partial_t)v(t,\cdot)\|_{L^2}
    & \leq C\,t^{-\frac\mu2}\,s^{\frac\mu2}\,\|v_1\|_{L^2} \\
    & \qquad + \|v_1\|_{L^1} \times \begin{cases}
C\,s\,t^{-\frac{n}2-1} & \text{if~$\mu>n+2$},\\
C\,s\,t^{-\frac{n}2-1}\,(1+\log (t/s)) & \text{if~$\mu=n+2$,}\\
C\,s^{\frac{\mu-n}2}\,t^{-\frac\mu2} & \text{if~$-n<\mu<n+2$, $\mu\neq1$,}\\
C\,s^{-\frac{n-1}2}\,t^{-\frac12}\,(1+\log(t/s)) & \text{if~$\mu=1$,}
\end{cases}
\end{align*}
and this concludes the proof.
\end{proof}

\subsection{The singular problem for the Euler-Poisson-Darboux equation}\label{sec:sing}

We consider the singular linear Cauchy problem
\begin{equation}\label{eq:CPsinglin}
\begin{cases}
v_{tt}-\triangle v+ \dfrac\mu{t}\, v_t=0, & t>0, \ x\in\R^n\,,\\
v(0,x)=v_0(x)\,, \quad v_t(0,x)=0\,.
\end{cases}
\end{equation}
We stress that for this singular problem the assumption~$v_t(0,x)=0$ is natural, so we assume that the initial datum~$v(0,x)$ is not identically zero, to exclude the null solution.

For~$\mu>0$, it is easy to check that the solution to the linear singular problem~\eqref{eq:CPsinglin} verifies (see also~\cite{Bre}):
\[ \hat v(t,\xi) = 2^\nu\,\Gamma(1+\nu)\,(t\xii)^{-\nu}\,J_\nu(t\xii)\hat v_0(\xi),\]
where~$\nu=(\mu-1)/2$. As a consequence, following as in the proof of Theorem~\ref{thm:main}, for~$q\in(1,\infty)$ and~$r\in[1,q]$, it is easy to prove the $L^r-L^q$ estimate
\begin{equation}\label{eq:rqsing}
\|v(t,\cdot)\|_{L^q} \leq C\,t^{-n\left(\frac1r-\frac1q\right)}\,\|v_0\|_{L^r},
\end{equation}
provided that~$d(r,q)\leq \mu/2$, or that the strict inequality holds if~$r=1$. %FALSO: The previous condition may be violated if we replace~$\|u_1\|_{L^r}$ by~$\|u_1\|_{W^{k,r}}$ with~$k\geq d(r,q)-\mu/2$ if~$r>1$, or~$k>d(1,q)-\mu/2$, even integer, if~$r=1$.

%When~$\mu<1$, we rely on Remark~\ref{rem:change}. In turn, this gives %%%% FALSO PERCHE' NON PUOI FARE IL CAMBIO
%%
%
%\|v(t,\cdot)\|_{L^q} \leq C\,t^{(1-\mu)_+-n\left(\frac1r-\frac1q\right)}\,\|v_0\|_{L^r},
%\end{equation}
%%
%provided that~$d(r,q)\leq \max\{\mu,2-\mu\}/2$, or that the strict inequality holds if~$r=1$.

%For any~$q\in(1,\infty)$ with~$d(q,q)>\mu/2$, the $L^q-L^q$ estimate may be easily replaced by the~$W^{\kappa,q}-L^q$ estimate: NON CI SERVE
%%
%\[ \|v(t,\cdot)\|_{L^q} \leq C\,(1+t)^\kappa\,\|v_0\|_{W^{\kappa,q}}, \qquad \kappa = d(q,q)-\frac\mu2 = (n-1)\left|\frac1q-\frac12\right|-\frac\mu2, \]
%%
%where~$W^{\kappa,q}$ is the Bessel potential space~$W^{\kappa,q}=\{f\in L^q: \ \mathfrak{F}^{-1}(\xii^\kappa \hat f)\in L^q\}$.
%We stress that if~$\mu\geq1$, the decay rate does not depend on~$\mu$, whereas it does for the nonsingular problem~\eqref{eq:CPlinear}.

Similarly, the energy estimates
\begin{equation}\label{eq:energysing}
\|(\partial_t,\nabla)v(t,\cdot)\|_{L^2} \leq \begin{cases}
t^{-\frac{n}2-1}\,\|v_0\|_{L^1} & \text{if~$\mu>n+2$,}\\
t^{-n\left(\frac1r-\frac12\right)-1}\,\|v_0\|_{L^r} & \text{if~$r\in(1,2]$ and~$\mu\geq n(2/r-1)+2$,}\\
t^{-\kappa}\, \|v_0\|_{H^{1-\kappa}} & \text{if~$\kappa\in[0,1)$ and~$\mu\geq2\kappa$,}
\end{cases}
\end{equation}
may be easily derived for the solution to~\eqref{eq:CPsinglin}, following the proof of Proposition~\ref{prop:energy}.

%%%%%%%%%%%%%%%%%%%%%%%%%%%%%%%%%%%%%%%%%%%%%%%%%%%%%%%%%%%%%

\section{Proof of Theorem~\ref{thm:main1}}\label{sec:proof1}

We first prove a generalization of Theorem~\ref{thm:main1} for the regular Cauchy problem, in which we replace the nonlinearity~$f(u)$ by~$t^{-\alpha}\,f(u)$, for~$\alpha\in[0,2)$, namely, we consider
\begin{equation}\label{eq:CPalpha}
\begin{cases}
u_{tt}-\Delta u + \dfrac\mu{t}\, u_t= t^{-\alpha}\,f(u), & t\geq t_0>0, \ x\in\R^n\,,\\
u(t_0,x)=0\,, \quad u_t(t_0,x)=u_1(x)\,,
\end{cases}
\end{equation}
instead of~\eqref{eq:CP}, in space dimension~$n=1,2$. For this model, we set
\begin{equation}\label{eq:pcritalpha}
p_\crit = \max \left\{ 1 + \frac{2-\alpha}{n-1+\min\{1,\mu\}}, \quad p_\Str(n+\mu,\alpha)\right\},
\end{equation}
where~$1+(2-\alpha)/(n-1+\min\{1,\mu\})$ is a \emph{modified Fujita exponent}, and~$p_\Str(n+\mu,\alpha)$ is a \emph{modified, shifted Strauss exponent}: for a given~$k>1$, we set~$p_\Str(k,\alpha)$ as the solution to
\begin{equation}\label{eq:Straussmod}
\frac{k-1}2(p-1)-(1-\alpha)-\frac1{p}=0.
\end{equation}
The exponent in~\eqref{eq:Straussmod} also appears later on, see Remark~\ref{rem:Tricomi}.

We remark that, in particular, $p_\crit\searrow1$ as~$\alpha\nearrow2$, for any~$\mu>1-n$.
\begin{remark}\label{rem:1p}
The critical exponent in~\eqref{eq:pcritalpha} is related to the $L^1-L^{p}$ decay rate in Corollary~\ref{cor:main}. Indeed, the value~$p_\crit$ is the power~$p$ such that the exponent of~$t$, multiplied by~$p$, summed to the exponent of~$s$, gives~$\alpha-1$, if we set~$r=1$ and~$q=p$ in \eqref{eq:rqdecaygood} or~\eqref{eq:rqdecaybad}.
\end{remark}
%
%To prove our result, we will use Corollary~\ref{cor:main}, setting~$r=1$. The key estimate for the nonlinear argument will be the one related to the $L^1-L^{p_\crit}$ estimate.
%
\begin{remark}\label{rem:optimal}
The exponent~$1+(2-\alpha)/(n-1+\min\{1,\mu\})$ in~\eqref{eq:pcritalpha} is a modified Fujita exponent and no global-in-time solutions exist if~$p\in(1,p_\crit]$, for suitable sign assumption on the initial datum. If~$\mu\geq1$, this follows by applying Theorem 1.1 in~\cite{DAL13}. If~$\mu<1$, it also follows by the same theorem, after using the change of variable $u^\sharp=t^{\mu-1}u$, as in Remark~\ref{rem:change}. Namely, \eqref{eq:CPalpha} is equivalent to
\begin{equation}\label{eq:CPalphasharp}
\begin{cases}
u_{tt}^\sharp- \Delta u^\sharp + \dfrac{2-\mu}{t}\,u_t^\sharp = t^{-\alpha-\frac\mu2(p-1)}\,f(u), & t>s\,, \ x\in\R^n\,,\\
u^\sharp(s,x)=0, \quad u_t^\sharp(s,x)=s^{1-\mu}\,u_1(x).
\end{cases}
\end{equation}
We also expect that the blow-up result in~\cite{IkedaSob17} may be extended to more general values of~$\alpha>0$, leading to a nonexistence result for~$p\leq p_\Str(n+\mu,\alpha)$, where~$p_\Str(k,\alpha)$ is defined by~\eqref{eq:pcritalpha}.
\end{remark}
We are now ready to state our main result for~\eqref{eq:CPalpha} in space dimension~$n=1$ (for a result in space dimension~$n=2$ see later, Section~\ref{sec:weakn2}).
\begin{theorem}\label{thm:maingen}
Fix~$n=1$, $\alpha\in[0,2)$, $\mu>0$ and~$p>p_\crit$, where~$p_\crit$ is as in~\eqref{eq:pcritalpha}. Then there exists~$\eps>0$ such that for any initial data as in~\eqref{eq:data}, there exists a unique $u\in\mathcal C([t_0,\infty),H^1)\cap\mathcal C^1([t_0,\infty),L^2)$, global-in-time energy solution to~\eqref{eq:CPalpha}. Moreover, the energy estimate
\begin{align}\nonumber
E(t)
    & = \frac12\,\|u_t(t,\cdot)\|_{L^2}^2 + \frac12\,\|u_x(t,\cdot)\|_{L^2}^2\\
    \label{eq:energyest}
    & \leq C\,\big(\|u_1\|_{L^1}^2+\|u_1\|_{L^2}^2\big)\times\begin{cases}
t^{-3} & \text{if~$\mu>3$},\\
t^{-3}\,(1+\log (t/t_0)) & \text{if~$\mu=3$,}\\
t^{-\mu} & \text{if~$0<\mu<3$ with~$\mu\neq1$,}\\
t^{-1}(1+\log(t/t_0))^2 & \text{if~$\mu=1$,}
\end{cases}
\end{align}
holds, $u\in L^\infty([t_0,\infty),L^{p_\crit})$ if~$p_\crit<2$, and for any~$\delta>0$, we have the decay estimate
\begin{equation}\label{eq:decay}
\|u(t,\cdot)\|_{L^q} \leq C\,\big(\|u_1\|_{L^1}+\|u_1\|_{L^2}\big)\times \begin{cases}
t^{(1-\mu)_+-1+\frac1q} & \text{if~$2-2/q<\max\{\mu,2-\mu\}$,} \\
t^{\delta-\frac\mu2} & \text{if~$2-2/q\geq \max\{\mu,2-\mu\}$,}
\end{cases}
\end{equation}
where~$C=C(t_0)>0$, for any~$q\in[p_\crit,\infty)$.
\end{theorem}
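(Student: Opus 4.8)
The plan is to recast problem~\eqref{eq:CPalpha} as a fixed-point equation through Duhamel's principle and to solve it by contraction in a weighted space. Denoting by~$K(t,s)$ the fundamental solution studied in Section~\ref{sec:linear}, I write a solution as~$u=u^{\lin}+u^{\nl}$, where~$u^{\lin}(t,\cdot)=K(t,t_0)\ast u_1$ and
\[ u^{\nl}(t,\cdot)=(Nu)(t,\cdot)\doteq\int_{t_0}^t K(t,s)\ast\big(s^{-\alpha}f(u(s,\cdot))\big)\,ds. \]
I then fix a Banach space~$X\subset\mathcal C([t_0,\infty),H^1)\cap\mathcal C^1([t_0,\infty),L^2)$ whose norm is the supremum over~$t\geq t_0$ of a base quantity~$\|u(t,\cdot)\|_{L^{q_0}}$, for a suitable~$q_0\in[p_\crit,\infty)$, together with~$\|(u_x,u_t)(t,\cdot)\|_{L^2}$, each divided by the target time-weight of~\eqref{eq:decay} and~\eqref{eq:energyest}; when~$p_\crit<2$ one takes~$q_0=p_\crit$ with a \emph{constant} weight, so that the claimed~$L^{p_\crit}$ bound is built into the norm. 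Since in space dimension~$n=1$ one has~$d(1,q)=1-1/q<1$ for every~$q\in(1,\infty)$, Corollary~\ref{cor:main} applies with~$r=1$ over the whole range~$q\in[p_\crit,\infty)$, and together with Proposition~\ref{prop:energy} it gives~$\|u^{\lin}\|_X\leq C\,A$, using the~$L^1\cap L^2$ smallness~\eqref{eq:data}. The goal is to show that~$\Phi(u)=u^{\lin}+Nu$ maps a small ball of~$X$ into itself and is a contraction there.

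The core is the estimate of~$Nu$. For each slice I apply Corollary~\ref{cor:main} (with~$r=1$ and with~$r=2$) and Proposition~\ref{prop:energy} to~$K(t,s)\ast(s^{-\alpha}f(u(s)))$, treating~$s^{-\alpha}f(u(s))$ as the initial velocity at starting time~$s$; the decisive point is to retain the precise power of~$s$ produced by the~$s$-dependence of these linear estimates. Using~\eqref{eq:fu} I bound~$\|f(u(s))\|_{L^1}\lesssim\|u(s)\|_{L^p}^p$ and~$\|f(u(s))\|_{L^2}\lesssim\|u(s)\|_{L^{2p}}^p$, and I control~$\|u(s)\|_{L^p},\|u(s)\|_{L^{2p}}$ through~$\|u\|_X$: the Gagliardo--Nirenberg inequality~$\|u\|_{L^\infty}\lesssim\|u_x\|_{L^2}^{\theta}\,\|u\|_{L^{q_0}}^{1-\theta}$ recovers~$L^\infty$ from the energy and the tracked base norm, whence every~$L^q$ with~$q\in[q_0,\infty]$ follows by interpolation with a time-weight that one checks is dominated by the target rate. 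Substituting into the slice estimate and integrating, the left-hand weight factors out as~$t$ to the target power, while the residual~$s$-integral is~$\int_{t_0}^t s^{\beta}\,ds$ for an exponent~$\beta$ that, by the very definition~\eqref{eq:pcritalpha} of~$p_\crit$ (see Remark~\ref{rem:1p}, built on~\eqref{eq:rqdecaygood}--\eqref{eq:rqdecaybad}), equals~$-1$ exactly at~$p=p_\crit$; hence~$p>p_\crit$ forces~$\beta<-1$, the integral converges, and~$\|Nu\|_X\leq C\,\|u\|_X^p$. The difference estimate~$\|Nu-Nw\|_X\leq C\,(\|u\|_X^{p-1}+\|w\|_X^{p-1})\,\|u-w\|_X$ follows verbatim from the Lipschitz bound in~\eqref{eq:fu}. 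Taking~$A$ (hence~$\eps$) small, Banach's fixed point theorem produces the unique solution in the ball; the full range of decay rates in~\eqref{eq:decay} is then obtained a posteriori by inserting this solution once more into the Duhamel representation and invoking Corollary~\ref{cor:main} for each~$q$.

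The main obstacle is precisely the bookkeeping in the nonlinear step: one must verify that the integrated~$s$-exponent is~$<-1$ across every regime --- the two branches of Corollary~\ref{cor:main} (according to the sign of~$d(1,q)-\max\{\mu,2-\mu\}/2$, i.e.\ whether~$2-2/q$ is below or above~$\max\{\mu,2-\mu\}$), the~$L^1$ and~$L^2$ contributions, and the energy --- and that the threshold in each branch is governed by the same number~$p_\crit$. The borderline values~$\mu=1$ and~$\mu=3$, where the linear estimates carry logarithmic factors, must be carried along and absorbed by the strict inequality~$p>p_\crit$; they are the source of the~$\log$ terms in~\eqref{eq:energyest} and of the~$\eps$-loss~$t^{\delta-\mu/2}$ in~\eqref{eq:decay}. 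For~$\mu<1$ one may either argue directly, since Corollary~\ref{cor:main} and Proposition~\ref{prop:energy} already cover all~$\mu$, or reduce to~$\mu>1$ via the change of unknown~$u^\sharp=t^{\mu-1}u$ of Remark~\ref{rem:change}, which turns the weight~$t^{-\alpha}$ into~$t^{-\alpha-\frac\mu2(p-1)}$ as recorded in~\eqref{eq:CPalphasharp}.
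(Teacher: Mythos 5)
Your proposal follows the paper's skeleton exactly — Duhamel representation, contraction mapping, the $L^1$--$L^q$ estimates of Corollary~\ref{cor:main} with $r=1$ (legitimate in $n=1$ since $d(1,q)=1-1/q<1$), Proposition~\ref{prop:energy}, and the criticality bookkeeping of Remark~\ref{rem:1p} — but it deviates in the design of the solution space, and this is where a genuine gap sits. The paper's norm \eqref{eq:normaX} tracks the \emph{entire family} $t^{\gamma_q}\|u(t,\cdot)\|_{L^q}$, $q\in[p_\crit,\infty)$, with the sharp weights \eqref{eq:gammaq}, so that $\|f(u)-f(w)\|_{L^1}\lesssim s^{-p\gamma_p}$ and $\|f(u)-f(w)\|_{L^2}\lesssim s^{-p\gamma_{2p}}$ come with the correct rates directly, as in \eqref{eq:fu1}. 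You instead track one base norm $\|u(t,\cdot)\|_{L^{q_0}}$ plus the energy and propose to reconstruct $\|u(s)\|_{L^p}$, $\|u(s)\|_{L^{2p}}$ by Gagliardo--Nirenberg. The first concrete problem: your \emph{constant} weight at $q_0=p_\crit$ when $p_\crit<2$ breaks the contraction. With a non-decaying base, GN gives $\|u(s)\|_{L^p}\lesssim\|u_x(s)\|_{L^2}^{b}\,\|u(s)\|_{L^{p_\crit}}^{1-b}\lesssim s^{-b\mu/2}\,\|u\|_{X}$ with $b=\bigl(1/p_\crit-1/p\bigr)/\bigl(1/p_\crit+1/2\bigr)\to0$ as $p\searrow p_\crit$, so the slice integral $\int_{t_0}^{t}s^{\min\{1,\mu\}-\alpha-pb\mu/2}\,ds$ has exponent tending to $\min\{1,\mu\}-\alpha>-1$ and diverges for $p$ near $p_\crit$. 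The base component must carry the decaying weight $t^{\gamma_{p_\crit}}$; the $L^\infty([t_0,\infty),L^{p_\crit})$ membership is then a separate \emph{output} estimate, not a constant-weight input.

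The second problem is quantitative and affects all of $\mu<3$: GN interpolation between the base and the energy is rate-sharp only on the parabolic scaling in which $\|u_x(t,\cdot)\|_{L^2}\sim t^{-3/2}$, i.e.\ only for $\mu\geq3$. For $\mu\in(1,3)$ with first-branch $q_0$ (so $\gamma_{q_0}=1-1/q_0$, energy rate $\mu/2$), the GN-interpolated rate for $L^q$, $q>q_0$, falls short of the target $\gamma_q$ by exactly $\bigl(1/q_0-1/q\bigr)\frac{3-\mu}{2/q_0+1}>0$, and an analogous deficit occurs for $\mu<1$ in the first branch. So \eqref{eq:decay} is \emph{not} encoded in your norm, your claim that the interpolated weight ``is dominated by the target rate'' is the opposite of a guarantee (it is strictly weaker), and every convergence condition — the $L^1$-piece, the $L^2$-piece entering Proposition~\ref{prop:energy}, and the a posteriori Duhamel reinsertion for each $q$ — must be re-verified with the weakened exponents. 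This is precisely the ``bookkeeping'' you defer: in spot checks the scheme does still close at the threshold $p_\crit$ (for instance, for the $L^2$-piece at $p=p_\crit=3-\alpha$ the margin in the integrated $s$-exponent is $(3-\mu)/(5-\alpha)$, which vanishes as $\mu\to3^-$), but you have not shown that the exponent stays below $-1$ for \emph{all} admissible $(\mu,\alpha,p)$ with $p>p_\crit$, nor that the threshold produced by the degraded rates is still $p_\crit$ rather than something larger. The paper's multi-exponent norm is not a stylistic choice: it is what makes the criticality computation close with equality exactly at $p=p_\crit$ in every regime, without any interpolation loss.
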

\begin{remark}\label{rem:findp}
Let us determine~$p_\crit$ according to the value of~$\mu$, in space dimension~$n=1$. We stress that
\[ 1 + \frac{2-\alpha}{\min\{1,\mu\}}>p_\Str(1+\mu,\alpha) \iff 2-\max\{\mu,2-\mu\}<\frac2{p_\crit}\,. \]
It holds~$p_\crit=p_\Str(1+\mu,\alpha)$ if, and only if, $\alpha\in[0,1]$ and~$\alpha\leq\mu\leq\bar\mu$, where
\begin{equation}\label{eq:barmualpha}
\bar \mu = \frac{2(2-\alpha)}{3-\alpha}.
\end{equation}
It holds~$p_\crit=3-\alpha$ if, and only if, either $\mu\geq\bar\mu$, when~$\alpha\in[0,1)$, or~$\mu\geq1$ when~$\alpha\in[1,2)$. It holds~$p_\crit=1+(2-\alpha)/\mu$ if, and only if, $0<\mu\leq\alpha$ if~$\alpha\in(0,1)$, or~$\mu\leq1$ if~$\alpha\in[1,2)$. In particular, we stress that this case never occurs when~$\alpha=0$.
\end{remark}
Setting~$\alpha=0$ in Theorem~\ref{thm:maingen}, we have a result for~\eqref{eq:CP}, but for~$\alpha=0$ we may remove the dependence of~$C$ from~$t_0$, replacing~$t$ by~$1+t$ in estimates~\eqref{eq:energyest} and~\eqref{eq:decay}.

With minor modifications, we may state the analogous of Theorem~\ref{thm:maingen} for the singular Cauchy problem with nonlinearity~$t^{-\alpha}f(u)$, but some technical restrictions on~$\alpha$ with respect to~$\mu$ appear, related to the singularity of~$s^{-\alpha}$ as~$s\to0$ in the Duhamel's integral. These restrictions may be relaxed if weak solutions are looked for, instead of energy solutions. We restricted Theorem~\ref{thm:main1} to~$\alpha=0$ for the ease of reading, being the generalization to~$\alpha>0$ quite straightforward.

\subsection{Proof of Theorem~\ref{thm:maingen}}

To prove Theorem~\ref{thm:maingen}, we use a standard contraction argument, exploiting the sharpness of the $L^r-L^q$ decay estimates derived in Corollary~\ref{cor:main}, to construct a suitable solution space, in which we may prove the global-in-time existence of small data solutions to~\eqref{eq:CPalpha} for~$p>p_\crit$.
\begin{proof}[Proof of Theorem~\ref{thm:maingen}]
For a general~$T>t_0$, we fix~$X(T)$ as a subspace of the energy space~$\mathcal C([t_0,T],H^1)\cap\mathcal C^1([t_0,T],L^2)$ if~$p_\crit\geq2$, or as a subspace of the space $\mathcal C([t_0,T],H^1)\cap\mathcal C^1([t_0,T],L^2)\cap L^\infty([t_0,T],L^{p_\crit})$ if~$p_\crit<2$. We prove that there exists a constant~$C=C(t_0)$, independent of~$T$, such that:
\begin{itemize}
\item the solution to the linear problem~\eqref{eq:CPlinear} with~$s=t_0$ and~$v_1=u_1$ verifies the estimate
\begin{equation}
\label{eq:dataX}
\|v\|_{X(T)} \leq C\,\big(\|u_1\|_{L^1}+\|u_1\|_{L^2}\big)\,;
\end{equation}
\item the operator
\[ F: X(T) \to X(T), \quad Fu(t,x) = \int_{t_0}^t K(t,s)\ast f(u(s,x))\,ds, \]
where~$K(t,s)$ is the fundamental solution to~\eqref{eq:CPlinear}, verifies the contractive estimate
\begin{equation}
\label{eq:contraX}
\|Fu-Fw\|_{X(T)} \leq C\,\|u-w\|_{X(T)}\big(\|u\|_{X(T)}^{p-1}+\|w\|_{X(T)}^{p-1}\big).
\end{equation}
\end{itemize}
By standard contraction arguments, properties \eqref{eq:dataX}-\eqref{eq:contraX} imply that there exists~$\eps>0$ such that if~$u_1$ verifies~\eqref{eq:data}, then there is a unique global-in-time solution to~\eqref{eq:CPalpha}, verifying
\[ \|u\|_{X(T)} \leq C\,\big(\|u_1\|_{L^1}+\|u_1\|_{L^2}\big)\,, \]
for any~$T>t_0$, with~$C=C(t_0)$, independent of~$T$. Indeed, let~$R>0$ be such that~$CR^{p-1}<1/2$. Then~$F$ is a contraction on~$X_R(T)= \{u\in X(T): \ \|u\|_{X(T)}\leq R\}$. The solution to~\eqref{eq:CPalpha} is a fixed point for~$v(t,x)+Fu(t,x)$, so if~$\|v\|_{X(T)}\leq R/2$, then~$u\in X_R(T)$ and the uniqueness and existence of the solution in~$X_R(T)$ follows by the Banach fixed point theorem on contractions. The condition~$\|v\|_{X(T)}\leq R/2$ is obtained taking initial data as in~\eqref{eq:data}, with~$C\eps\leq R/2$. Since~$C$, $R$ and~$\eps$ do not depend on~$T$, the solution is global-in-time.

The continuity of the maps~$t\mapsto u(t,\cdot)\in H^1$ and~$t\mapsto u_t(t,\cdot)\in L^2$ is standard and so we will omit its proof. Then, in the following, we prove~\eqref{eq:dataX} and~\eqref{eq:contraX} for a suitable choice of~$X(T)$ and its norm.

We fix the notation
\begin{equation}\label{eq:eng}
g(t)= \begin{cases}
t^{-\frac32} & \text{if~$\mu>3$,}\\
t^{-\frac\mu2}\,(1+\log (t/t_0)) & \text{if~$\mu=3$,}\\
t^{-\frac\mu2} & \text{if~$0<\mu<3$, $\mu\neq1$,}\\
t^{-\frac12}\,(1+\log(t/t_0)) & \text{if~$\mu=1$,}
\end{cases}
\end{equation}
to describe the decay rate for the energy estimates, according to the different values of~$\mu$, and for any~$q\in[p_\crit,\infty)$, we set
\begin{equation}\label{eq:gammaq}
\gamma_q=\begin{cases}
1-\frac1q - (1-\mu)_+ & \text{if~$2-2/q<\max\{\mu,2-\mu\}$,} \\
\frac\mu2-\delta & \text{if~$2-2/q\geq\max\{\mu,2-\mu\}$,}
\end{cases}
\end{equation}
for a sufficiently small~$\delta>0$, which we will fix later.

Let~$X(T)$ be the subspace of functions in~$\mathcal C([t_0,T],H^1)\cap\mathcal C^1([t_0,T],L^2)$, verifying~$\|u\|_{X(T)}<\infty$, where
\begin{equation}\label{eq:normaX}
\|u\|_{X(T)} = \sup_{t\in[t_0,T]} \Bigl( (g(t))^{-1}\, \|(u_t,u_x)(t,\cdot)\|_{L^2} + \sup \big\{ t^{\gamma_q}\,\|u(t,\cdot)\|_{L^q}: \ q\in[p_\crit,\infty)\big\} \Bigr).
\end{equation}
With this choice of norm on~$X(T)$, the solution to the linear problem~\eqref{eq:CPlinear} verifies~\eqref{eq:dataX}.

Indeed, if~$2-2/q<\max\{\mu,2-\mu\}$, by~\eqref{eq:rqdecaygood} with~$r=1$, we obtain
\begin{equation}\label{eq:1qdecaygood}
\|v(t,\cdot)\|_{L^q} \leq C\,s^{\min\{1,\mu\}}\,t^{(1-\mu)_+-1+\frac1q}\, \|v_1\|_{L^1} \,,
\end{equation}
for some~$C>0$, independent of~$s,t$. If~$2-2/q\geq\max\{\mu,2-\mu\}$, by~\eqref{eq:rqdecaybad} with~$r=1$, we obtain
\begin{equation}\label{eq:1qdecaybad}
\|v(t,\cdot)\|_{L^q} \leq C\,s^{\frac1q+\frac\mu2-\delta}\,t^{\delta-\frac\mu2}\, \|v_1\|_{L^1}\,,
\end{equation}
taking~$\eps\leq\delta$. Setting~$s=t_0$, thanks to \eqref{eq:1qdecaygood}-\eqref{eq:1qdecaybad}, and thanks to Proposition~\ref{prop:energy}, we get~\eqref{eq:dataX}.

Now let~$u,w\in X(T)$. We want to prove~\eqref{eq:contraX}.

First, we assume that~$p_\crit=1+(2-\alpha)/\min\{1,\mu\}$, that is, $2-2/p_\crit<\max\{\mu,2-\mu\}$ (see Remark~\ref{rem:findp}).

Let~$q\in[p_\crit,\infty)$. We consider two cases.

If~$q$ verifies $2-2/q<\max\{\mu,2-\mu\}$, using~\eqref{eq:1qdecaygood}, we get
\begin{align*}
& t^{-(1-\mu)_++1-\frac1q} \, \|(Fu-Fw)(t,\cdot)\|_{L^q}\\
    & \qquad \leq C\, \int_{t_0}^t s^{\min\{1,\mu\}-\alpha}\,\| (f(u)-f(w))(s,\cdot) \|_{L^1}\,ds \\
    & \qquad \leq C\,\int_{t_0}^t s^{\min\{1,\mu\}-\alpha-p\gamma_p}\,\,ds\,\|u-w\|_{X(T)}\big( \|u\|_{X(T)}^{p-1}+\|w\|_{X(T)}^{p-1}\big),
\end{align*}
where we used~\eqref{eq:fu} with H\"older inequality, and the fact that~$u,w\in X(T)$, to estimate
\begin{equation}\label{eq:fu1}
\begin{split}
\|(f(u)-f(w))(s,\cdot)\|_{L^1}
    & \leq C\, \|(u-w)(s,\cdot)\|_{L^p}\,\big( \|u(s,\cdot)\|_{L^p}^{p-1}+\|w(s,\cdot)\|_{L^p}^{p-1}\big) \\
    & \leq C\,s^{-p\gamma_p}\,\|u-w\|_{X(T)}\big( \|u\|_{X(T)}^{p-1}+\|w\|_{X(T)}^{p-1}\big).
\end{split}\end{equation}
We now want to prove that the integral
\[ \int_{t_0}^t s^{\min\{1,\mu\}-\alpha-p\gamma_p}\,\,ds \]
is uniformly bounded, with respect to~$t$, that is, that the integral
\[ \int_{t_0}^\infty s^{\min\{1,\mu\}-\alpha-p\gamma_p}\,\,ds \]
is convergent. This latter is true if, and only if, $p\gamma_p+(1-\mu)_+>2-\alpha$. This condition holds for any~$p>p_\crit=1+(2-\alpha)/\min\{1,\mu\}$, due to
\[ p\gamma_p +(1-\mu)_+> p_\crit\gamma_{p_\crit} +(1-\mu)_+ = (1-(1-\mu)_+) (p_\crit-1)=\min\{1,\mu\}(p_\crit-1).\]
Now let~$q$ verify~$2-2/q\geq\max\{\mu,2-\mu\}$. We now use~\eqref{eq:1qdecaybad} to get
\begin{equation}\label{eq:bad1}
\begin{split}
t^{\frac\mu2-\delta} \, \|(Fu-Fw)(t,\cdot)\|_{L^q}
    & \leq C\, \int_{t_0}^t s^{-\alpha+\frac1q+\frac\mu2-\delta}\,\| f(u(s,\cdot))-f(w(s,\cdot)) \|_{L^1}\,ds \\
    & \leq C\,\int_{t_0}^t s^{-\alpha+\frac1q+\frac\mu2-\delta-p\gamma_p}\,\,ds\,\|u-w\|_{X(T)}\big( \|u\|_{X(T)}^{p-1}+\|w\|_{X(T)}^{p-1}\big).
\end{split}\end{equation}
This integral is also uniformly bounded, with respect to~$t$, due to
\[ \frac1q+\frac\mu2-\delta \leq 1-\frac{\max\{\mu,2-\mu\}}2+\frac\mu2-\delta = \min\{1,\mu\}-\delta <\min\{1,\mu\}. \]
Thus, we proved
\[ t^{\gamma_q} \,\|Fu(t,\cdot)-Fw(t,\cdot)\|_{L^q} \leq C(t_0)\,\|u-w\|_{X(T)}\big( \|u\|_{X(T)}^{p-1}+\|w\|_{X(T)}^{p-1}\big)\,,\]
for any~$q\in[p_\crit,\infty)$.

We now consider the energy estimates. Exception given for the cases~$\mu=3,1$, for which an additional logarithmic power of~$s$ appears in the integral, by Proposition~\ref{prop:energy}, we obtain:
\begin{align*}
& (g(t))^{-1}\,\|(\partial_t,\partial_x)(Fu-Fw)(t,\cdot)\|_{L^2}\\
    & \qquad \leq C\,\int_{t_0}^t s^{-\alpha+\min\{1,\frac{\mu-1}2\}}\,\big(\| (f(u)-f(w))(s,\cdot) \|_{L^1}+s^{\frac12}\,\| (f(u)-f(w))(s,\cdot) \|_{L^2}\big)\,ds \\
    & \qquad \leq C\,\int_{t_0}^t \big(s^{-\alpha+\frac{\min\{2,\mu-1\}}2-p\gamma_p}+s^{-\alpha+\frac{\min\{3,\mu\}}2-p\gamma_{2p}}\big)\,\,ds\,\|u-w\|_{X(T)}\big( \|u\|_{X(T)}^{p-1}+\|w\|_{X(T)}^{p-1}\big)\\
    & \qquad \leq C(t_0)\,\|u-w\|_{X(T)}\big( \|u\|_{X(T)}^{p-1}+\|w\|_{X(T)}^{p-1}\big).
\end{align*}
Here we used
\[ -\alpha+\frac{\min\{2,\mu-1\}}2-p\gamma_p \leq -\alpha+1-p\gamma_p<-\alpha+1-p_\crit\gamma_{p_\crit}=-1, \]
if~$\mu>1$, or
\[ -\alpha+\frac{\min\{2,\mu-1\}}2-p\gamma_p < -\alpha-p_\crit\gamma_{p_\crit}<-1, \]
if~$\mu\in(0,1)$. Similarly, if~$2-1/p_\crit<\max\{\mu,2-\mu\}$, then
\[ -\alpha+\frac{\min\{3,\mu\}}2-p\gamma_{2p_\crit} < -\alpha+1+\frac12 - (p_\crit-1/2) =-1  \]
if~$\mu>1$, or
\[ -\alpha+\frac{\min\{3,\mu\}}2-p\gamma_{2p_\crit} < -\alpha+\frac\mu2 - \mu(p_\crit-1/2)=-2, \]
if~$\mu\in(0,1)$. On the other hand, if~$2-1/p_\crit\geq\max\{\mu,2-\mu\}$ (so that~$\mu\in(0,2)$), then
\[
-\alpha+\frac{\min\{3,\mu\}}2-p\gamma_{2p_\crit} = -\alpha+\frac\mu2 -p\left( \frac\mu2-\delta\right) < -\alpha-\frac{\mu}2\,(p_\crit-1)\leq-1.
\]
Summarizing, we proved~\eqref{eq:contraX}, and this concludes the proof of Theorem~\ref{thm:maingen} when~$p_\crit=1+(2-\alpha)/\min\{1,\mu\}$.

\bigskip

We now assume that~$p_\crit=p_\Str(1+\mu,\alpha)$, that is, $2-2/p_\crit\geq \max\{\mu,2-\mu\}$ (see Remark~\ref{rem:findp}). In this case, $\gamma_q=\mu/2-\delta$ for any~$q\in[p_\crit,\infty)$. %Let~$q\in[p_\crit,\infty)$.
Using~\eqref{eq:1qdecaybad}, we find again~\eqref{eq:bad1}, and we get that the integral is uniformly bounded with respect to~$t$, if, and only if,
\[ -\alpha+\frac1q+\frac\mu2-\delta-p\left(\frac\mu2 -\delta \right) <-1 \]
for some~$\delta>0$. The above condition is verified for any~$p>\bar p$ and~$q\geq\bar p$, for a sufficiently small~$\delta$, depending on~$p$, if, and only if,
\[ \frac{\mu}2\,(\bar p-1) - (1-\alpha) -\frac1{\bar p}\geq0, \]
that is, $\bar p\geq p_\Str(1+\mu,\alpha)$. Thus, we proved
\[ t^{\frac\mu2-\delta}\,\|Fu(t,\cdot)-Fw(t,\cdot)\|_{L^q} \leq C(t_0)\,\|u-w\|_{X(T)}\big( \|u\|_{X(T)}^{p-1}+\|w\|_{X(T)}^{p-1}\big)\,,\]
for any~$q\in[p_\crit,\infty)$.

For the energy estimates, by Proposition~\ref{prop:energy}, exception given for the case~$\mu=1$ for which a logarithmic power appears, we easily obtain:
\begin{align*}
& t^{\frac\mu2}\|(\partial_t,\partial_x)(Fu-Fw)(t,\cdot)\|_{L^2}\\
    & \qquad \leq C\, \int_{t_0}^t s^{-\alpha+\frac{\mu-1}2}\,\big(\| (f(u)-f(w))(s,\cdot) \|_{L^1}+s^{\frac12}\,\| (f(u)-f(w))(s,\cdot) \|_{L^2}\big)\,ds \\
    & \qquad \leq C\, \int_{t_0}^t s^{-\alpha+\frac{\mu-1}2-p\left(\frac\mu2-\delta\right)}\,\big(1+s^{\frac12} \big)\,ds\,\|u-w\|_{X(T)}\big( \|u\|_{X(T)}^{p-1}+\|w\|_{X(T)}^{p-1}\big)\\
    & \qquad \leq C(t_0)\,\|u-w\|_{X(T)}\big( \|u\|_{X(T)}^{p-1}+\|w\|_{X(T)}^{p-1}\big),
\end{align*}
where the integral is uniformly bounded with respect to~$t$, for a sufficiently small~$\delta$, depending on~$p$, due to
\[ -\alpha+\frac{\mu}2-p\left(\frac\mu2-\delta\right) < -\frac{\mu}2\,(p_\crit-1)-\alpha =-1-\frac1{p_\crit} <-1,\]
where we used~$p>p_\crit =p_\Str(1+\mu,\alpha)$. This concludes the proof of Theorem~\ref{thm:maingen}.
\end{proof}

%%%%%%%%%%%%%%%%%%%%%%%%%%%%%%%%%%%%%%%%%%%%%%%%%%%%%%%%%%%%%%%%%%%

\subsection{Proof of Theorem~\ref{thm:main1}}

Setting~$\alpha=0$, we may conveniently modify the proof of Theorem~\ref{thm:maingen} to prove Theorem~\ref{thm:main1}.
\begin{proof}[Proof of Theorem~\ref{thm:main1}]
We follow the proof of Theorem~\ref{thm:maingen}, with appropriate modifications to manage the singularity at~$t=0$ in the case of the singular problem~\eqref{eq:CPsing}, and to remove the dependence on~$t_0$ from the constants in the case of the regular problem~\eqref{eq:CP}.

We now want to prove that for a given~$T>0$, there exists a constant~$C>0$, independent of~$T$, such that either the solution to the linear problem~\eqref{eq:CPsinglin} with~$v_0=u_0$ verifies the estimate
\begin{equation}
\label{eq:dataXsing}
\|v\|_{X(T)} \leq C\,\big(\|u_0\|_{L^1}+\|u_0\|_{H^1}\big)\,,
\end{equation}
or the solution to the linear problem~\eqref{eq:CP} with~$v_1=u_1$ verifies~\eqref{eq:dataX}, and \eqref{eq:contraX} holds, with~$C$ independent of~$T$ and~$t_0$. For the ease of notation, in the following, we fix~$t_0=0$ if we consider the singular problem~\eqref{eq:CPsing}.

By standard contraction arguments (as in the proof of Theorem~\ref{thm:maingen}), properties~\eqref{eq:dataXsing} and~\eqref{eq:contraX} imply that there exists~$\eps>0$ such that if~$u_0$ verifies~\eqref{eq:datasing}, then there is a unique global-in-time solution to~\eqref{eq:CPsing}, verifying
\[ \|u\|_{X(T)} \leq C\,\big(\|u_0\|_{L^1}+\|u_0\|_{H^1}\big)\,, \]
for any~$T>0$, with~$C>0$, independent of~$T$, and similarly for problem~\eqref{eq:CP}. We now define~$g$ by
\begin{equation}\label{eq:engsing}
g(t)= \begin{cases}
(1+t)^{-\frac32} & \text{if~$\mu>3$,}\\
(1+t)^{\delta-\frac\mu2} & \text{if~$\mu=3$,}\\
(1+t)^{-\frac\mu2} & \text{if~$0<\mu<3$, $\mu\neq1$,}\\
(1+t)^{-\frac12}\,(1+\log(1+t)) & \text{if~$\mu=1$,}
\end{cases}
\end{equation}
whereas~$\gamma_q$ is as in~\eqref{eq:gammaq}. Now~$X(T)$ is the subspace of functions in~$\mathcal C([t_0,T],H^1)\cap\mathcal C^1([t_0,T],L^2)$, verifying~$\|u\|_{X(T)}<\infty$, where
\begin{equation}\label{eq:normaXsing}
\|u\|_{X(T)} = \sup_{[t_0,T]} \Bigl( (g(t))^{-1}\, \|(u_t,u_x)(t,\cdot)\|_{L^2} + \sup \big\{ (1+t)^{\gamma_q}\,\|u(t,\cdot)\|_{L^q}: \ q\in[p_\crit,\infty)\big\} \Bigr).
\end{equation}
Let us prove that the solution to the linear singular problem~\eqref{eq:CPsinglin} verifies~\eqref{eq:dataXsing}. Fix~$q\in[p_\crit,\infty)$. We distinguish estimates at short time~$t\in[0,1]$ and at a long time~$t\in[1,T]$. At short time~$t\leq1$, it is sufficient to employ~\eqref{eq:rqsing} with~$r=q$ and get $\|v(t,\cdot)\|_{L^q}\leq C\,\|u_0\|_{L^q}$. At long time~$t\geq1$, we may use~\eqref{eq:rqsing} with~$r=1$ if~$1-1/q<\mu/2$, to get
\[ \|v(t,\cdot)\|_{L^q}\leq C\,t^{-1+\frac1q}\,\|u_0\|_{L^1}\,. \]
If~$1-1/q>\mu/2$, we fix~$r\in(1,q)$ such that~$1-1/q=\mu/2$, so that by~\eqref{eq:rqsing}, we obtain
\[ \|v(t,\cdot)\|_{L^q}\leq C\,t^{-\frac\mu2}\,\|u_0\|_{L^r}. \]
In the limit case~$1-1/q=\mu/2$, we fix~$r\in(1,q)$ such that~$1-1/q=\mu/2-\delta$ so that by~\eqref{eq:rqsing} by get
\[ \|v(t,\cdot)\|_{L^q}\leq C\,t^{\delta-\frac\mu2}\,\|u_0\|_{L^r}. \]
We notice that when~$\mu\in(0,1)$ and~$\mu/2\leq 1-1/q<1-\mu/2$, we may still estimate
\[ -\frac\mu2 < (1-\mu)_+ -1+\frac1q = \gamma_q. \]
We proceed similarly for the energy estimates. At short time~$t\leq1$, it is sufficient to use~\eqref{eq:energysing} with~$\kappa=0$, whereas at long time~$t\geq1$ we use~\eqref{eq:energysing} with~$r=1$ if~$\mu>3$, with~$r\in(1,2]$ such that~$\mu=2/r+1$ if~$\mu\in[2,3)$, and such that~$1/r-1/2=\delta$ if~$\mu=3$, and with~$\kappa=\mu/2$ if~$\mu\in(0,2)$. This proves~\eqref{eq:dataXsing}.

Similarly, we prove~\eqref{eq:dataX} with~$C$ independent of~$T,t_0$.

Now let~$u,w\in X(T)$. We want to prove~\eqref{eq:contraX}. We proceed as in the proof of Theorem~\ref{thm:maingen}, but we notice that the definition of the norm in~\eqref{eq:normaXsing} means that~\eqref{eq:fu1} is now replaced by
\begin{equation}\label{eq:fusing}
\|(f(u)-f(w))(s,\cdot)\|_{L^r}\leq C\,(1+s)^{-p\gamma_{pr}}\,\|u-w\|_{X(T)}\big( \|u\|_{X(T)}^{p-1}+\|w\|_{X(T)}^{p-1}\big),
\end{equation}
in particular we avoided the singularity at~$s=0$ in the singular problem. We shall distinguish estimates at short time and at a long time.

At short time~$t\leq1$, for~$q\in[p_\crit,\infty)$, we rely on the $L^q-L^q$ estimate~\eqref{eq:rqdecaygood} to get
\[ \begin{split}
\|(Fu-Fw)(t,\cdot)\|_{L^q}
    &  \leq C\,t^{(1-\mu)_+}\,\int_{t_0}^t s^{\min\{1,\mu\}}\,\| f(u(s,\cdot))-f(w(s,\cdot)) \|_{L^q}\,ds \\
    &  \leq C\,t^{(1-\mu)_+}\,\int_{t_0}^t s^{\min\{1,\mu\}}\,\,ds\,\|u-w\|_{X(T)}\big( \|u\|_{X(T)}^{p-1}+\|w\|_{X(T)}^{p-1}\big)\\
    &  \leq C_1\,\|u-w\|_{X(T)}\big( \|u\|_{X(T)}^{p-1}+\|w\|_{X(T)}^{p-1}\big)\,.
\end{split} \]
At long time~$t\geq1$, we proceed as in the proof of Theorem~\ref{thm:maingen}. In turn, this means that we shall prove the convergence of
\[ \int_{t_0}^\infty s^{\min\{1,\mu\}}\,(1+s)^{-p\gamma_p}\,\,ds, \]
and similarly for the other integrals. The condition for the convergence as~$s\to\infty$ is the same considered in the proof of Theorem~\ref{thm:maingen}. The convergence of the integral as~$s\to0$, or the fact that the estimate for the integral may be taken uniformly with respect to~$t_0$, is easily derived thanks to the fact that we fixed~$\alpha=0$, since the most singular powers of~$s$ which appears is~$s^{\frac{\mu-1}2}$, when~$\mu\in(0,1)$.

This concludes the proof of Theorem~\ref{thm:main1}.
\end{proof}
We stress that to extend the proof of Theorem~\ref{thm:main1} to the case of a nonlinearity of type~$t^{-\alpha}f(u)$, with~$\alpha>0$, it would be necessary to impose $\min\{1,\mu\}-\alpha>-1$ to get the convergence of the integral
\[ \int_0^1 s^{\min\{1,\mu\}-\alpha}\,\,ds, \]
and similar conditions for the other integrals appearing in the proof of Theorem~\ref{thm:maingen}.

\subsection{A result for~\eqref{eq:CPalpha} in space dimension~$n=2$}\label{sec:weakn2}

We may extend Theorem~\ref{thm:maingen} to space dimension~$n=2$ when~$\alpha\in(0,2)$ (Theorem~\ref{thm:main2} covers the case~$\alpha=0$), and this can be of interest in view of the results in Section~\ref{sec:Tricomi}. For the sake of brevity, we only consider weak solutions in~$L^\infty([t_0,\infty),L^p)$, with~$p\in(p_\crit,2)$ (the restriction~$p<2$ is made to use~$L^1-L^p$ estimates in Corollary~\ref{cor:main}). As in Remark~\ref{rem:findp}, we may compute the value of~$p_\crit$ in~\eqref{eq:pcritalpha} if~$n=2$.
\begin{remark}\label{rem:findp2}
Let us determine the value of~$p_\crit$ in~\eqref{eq:pcritalpha} if~$n=2$, according to the value of~$\mu>-1$. We stress that
\[ 1 + \frac{2-\alpha}{1+\min\{1,\mu\}}>p_\Str(2+\mu,\alpha) \iff 3-\max\{\mu,2-\mu\}<\frac2{p_\crit}\,. \]
It holds~$p_\crit=p_\Str(2+\mu,\alpha)$ if, and only if, $\alpha-1\leq\mu\leq\bar\mu$, where
\begin{equation}\label{eq:barmualpha2}
\bar \mu = 2 - \frac{\alpha}{4-\alpha}.
\end{equation}
It holds~$p_\crit=2-\alpha/2$ if, and only if, $\mu\geq\bar\mu$, and it holds~$p_\crit=1+(2-\alpha)/(1+\mu)$ if, and only if, $-1<\mu\leq\alpha-1$.
\end{remark}
\begin{proposition}\label{prop:gen2}
Fix~$n=2$, and either $\alpha\in(0,1]$ and $\mu>2(1-\alpha)$ or~$\alpha\in(1,2)$ and~$\mu>1-\alpha$. Let~$p\in(p_\crit,2)$, where~$p_\crit$ is as in~\eqref{eq:pcritalpha}. Then there exists~$\eps>0$ such that for any initial data
\begin{equation}\label{eq:dataL1}
u_1\in L^1, \quad \text{with}\ \|u_1\|_{L^1}\leq\eps,
\end{equation}
there exists a unique $u\in L^\infty([t_0,\infty),L^p)$, global-in-time weak solution to~\eqref{eq:CPalpha}. Moreover, for any~$\delta>0$, we have the decay estimate
\begin{equation}\label{eq:decayweak}
\|u(t,\cdot)\|_{L^p} \leq C\,\|u_1\|_{L^1}\times \begin{cases}
t^{(1-\mu)_+-2\left(1-\frac1q\right)} & \text{if~$3-2/q<\max\{\mu,2-\mu\}$,} \\
t^{\delta+\frac1q-\frac{\mu+1}2} & \text{if~$3-2/q\geq \max\{\mu,2-\mu\}$,}
\end{cases}
\end{equation}
where~$C=C(t_0)>0$.
\end{proposition}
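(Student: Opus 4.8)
The plan is to run the same contraction-mapping scheme used for Theorem~\ref{thm:maingen}, now in space dimension~$n=2$, but tracking only the weighted~$L^p$ norm of the solution, since we content ourselves with weak solutions in~$L^\infty([t_0,\infty),L^p)$. For~$T>t_0$ I would set~$X(T)$ to be the space of~$u\in L^\infty([t_0,T],L^p)$ with
\[ \|u\|_{X(T)} = \sup_{t\in[t_0,T]} t^{\gamma_p}\,\|u(t,\cdot)\|_{L^p} < \infty, \]
where~$\gamma_p$ is the decay exponent read off from~\eqref{eq:decayweak}: $\gamma_p=2(1-1/p)-(1-\mu)_+$ when~$3-2/p<\max\{\mu,2-\mu\}$, and~$\gamma_p=(\mu+1)/2-1/p-\delta$ otherwise, for a small~$\delta>0$ to be fixed. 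The only structural feature specific to~$n=2$ is that the~$L^1$--$L^p$ estimate of Corollary~\ref{cor:main} needs~$d(1,p)<1$; since~$d(1,p)=3/2-1/p$ when~$n=2$, this is exactly the restriction~$p<2$ imposed in the statement. The hypotheses on~$\mu$ serve only to guarantee~$p_\crit<2$, so that the interval~$(p_\crit,2)$ is nonempty: evaluating~\eqref{eq:Straussmod} at~$p=2$ shows~$p_\Str(2+\mu,\alpha)<2\iff\mu>2(1-\alpha)$, while the modified Fujita branch~$1+(2-\alpha)/(1+\mu)<2\iff\mu>1-\alpha$.

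For the linear part I would apply Corollary~\ref{cor:main} with~$r=1$ and~$q=p$ to the solution~$v$ of~\eqref{eq:CPlinear} with~$s=t_0$ and~$v_1=u_1$. According to whether~$d(1,p)<\max\{\mu,2-\mu\}/2$ or not, the corollary returns exactly the two branches of~\eqref{eq:decayweak} (the first from~\eqref{eq:rqdecaygood}, the second from~\eqref{eq:rqdecaybad} after replacing~$d(1,p)$ by~$d(1,p)+\eps$), hence~$\|v\|_{X(T)}\le C\,\|u_1\|_{L^1}$ with~$C=C(t_0)$ independent of~$T$, which is the analogue of~\eqref{eq:dataX}.

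For the nonlinear part I would estimate the Duhamel operator~$Fu(t,x)=\int_{t_0}^t K(t,s)\ast s^{-\alpha}f(u(s,\cdot))\,ds$ by feeding the same~$L^1$--$L^p$ kernel bound into the integral Minkowski inequality, controlling the nonlinearity in~$L^1$ by Hölder: using~\eqref{eq:fu} together with~$(p-1)p'=p$ yields
\[ \|(f(u)-f(w))(s,\cdot)\|_{L^1} \le C\,s^{-p\gamma_p}\,\|u-w\|_{X(T)}\big(\|u\|_{X(T)}^{p-1}+\|w\|_{X(T)}^{p-1}\big). \]
After multiplying by~$t^{\gamma_p}$, the contractive estimate~\eqref{eq:contraX} reduces to the uniform boundedness in~$t$ of an integral~$\int_{t_0}^t s^{\sigma-\alpha-p\gamma_p}\,ds$, where~$\sigma$ is the~$s$-exponent of the linear kernel ($\sigma=\min\{1,\mu\}$ in the good-decay regime, $\sigma=1/p+(\mu-1)/2-\eps$ in the bad-decay regime). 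Since~$t_0>0$ there is no singularity at the lower endpoint, so only convergence as~$s\to\infty$ is at stake, i.e.~$\sigma-\alpha-p\gamma_p<-1$.

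The heart of the matter, and the step I expect to require the most care, is matching this convergence condition with~$p_\crit$ of~\eqref{eq:pcritalpha}; this is the ``catch the critical exponent'' bookkeeping anticipated in Remark~\ref{rem:1p}. For those~$p$ in the good-decay regime one has~$p\gamma_p=2(p-1)-p(1-\mu)_+$ increasing in~$p$, so the condition holds once~$p\gamma_p>p_\crit\gamma_{p_\crit}$, which the modified Fujita part of~$p_\crit$ secures. For those~$p$ in the bad-decay regime, letting~$\delta,\eps\to0$, the condition rearranges into
\[ \frac{1+\mu}2\,(p-1)-(1-\alpha)-\frac1p>0, \]
which is precisely~\eqref{eq:Straussmod} with~$k=2+\mu$, i.e.~$p>p_\Str(2+\mu,\alpha)$; since~$p_\crit$ is the maximum of the two exponents, $p>p_\crit$ guarantees convergence in either regime, and a sufficiently small~$\delta$ (depending on~$p$) absorbs the~$\delta,\eps$ corrections. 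The borderline~$\mu=1$ merely introduces the harmless~$(1+\log(t/t_0))$ factor from Section~\ref{sec:integer}. With the integral bounded uniformly in~$T$, the Banach fixed point theorem on a ball~$X_R(T)$, exactly as in the proof of Theorem~\ref{thm:maingen}, delivers the unique global-in-time weak solution together with the decay estimate~\eqref{eq:decayweak}.
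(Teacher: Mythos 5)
Your proposal is correct and follows essentially the same route as the paper's proof: the same one-norm solution space $\|u\|_{X(T)}=\sup_t t^{\gamma_p}\|u(t,\cdot)\|_{L^p}$ with the regime-dependent weight, the same use of Corollary~\ref{cor:main} with $r=1$, $q=p$ (whence the restriction $p<2$ via $d(1,p)=3/2-1/p<1$), the same H\"older bound $\|f(u)-f(w)(s,\cdot)\|_{L^1}\lesssim s^{-p\gamma_p}$, and the same identification of the integrability conditions $\sigma-\alpha-p\gamma_p<-1$ with the modified Fujita exponent and with $p_\Str(2+\mu,\alpha)$ from~\eqref{eq:Straussmod}. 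Your added verifications (that the hypotheses on $\mu$ are exactly what make $p_\crit<2$, and the logarithmic factor at $\mu=1$) are consistent with, and slightly more explicit than, the paper's write-up.
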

\begin{proof}
We follow the proof of Theorem~\ref{thm:maingen}, but now, for a given~$T>t_0$, $X(T)$ is the subspace of~$L^\infty([t_0,\infty),L^p)$ for which
\[ \|v\|_{X(T)} = \begin{cases}
\sup_{t\in[t_0,T]} t^{2\left(1-\frac1p\right)-(1-\mu)_+} \|v(t,\cdot)\|_{L^p} & \text{if~$3-\max\{\mu,2-\mu\}< 2/p$} \\
\sup_{t\in[t_0,T]} t^{\frac{\mu+1}2-\frac1q-\delta} \|v(t,\cdot)\|_{L^p} & \text{if~$3-\max\{\mu,2-\mu\}\geq 2/p$,}
\end{cases} \]
for a sufficiently small~$\delta>0$ which we will fix later. We prove that there exists a constant~$C=C(t_0)$, independent of~$T$, such that the solution to the linear problem~\eqref{eq:CPlinear} with~$s=t_0$ and~$v_1=u_1$ verifies the estimate
\begin{equation}
\label{eq:dataX1}
\|v\|_{X(T)} \leq C\,\|u_1\|_{L^1}\,,
\end{equation}
and~\eqref{eq:contraX} holds. By standard contraction arguments (as in the proof of Theorem~\ref{thm:maingen}), properties \eqref{eq:dataX1} and~\eqref{eq:contraX} imply that there exists~$\eps>0$ such that if~$u_1$ verifies~\eqref{eq:dataL1}, then there is a unique global-in-time solution to~\eqref{eq:CPalpha}, verifying
\[ \|u\|_{X(T)} \leq C\,\|u_1\|_{L^1}\,, \]
for any~$T>t_0$, with~$C=C(t_0)$, independent of~$T$. It is clear that the solution to the linear problem~\eqref{eq:CPlinear} verifies~\eqref{eq:dataX1}, applying Corollary~\ref{cor:main} with~$r=1$ (here the assumption~$p<2$ comes into play). Indeed, if~$d(1,p)<\max\{\mu,2-\mu\}/2$, that is, $3-\max\{\mu,2-\mu\}< 2/p$, by~\eqref{eq:rqdecaygood}, with~$r=1$ and~$q=p$, we obtain
\begin{equation}\label{eq:1qdecaygoodn2}
\|v(t,\cdot)\|_{L^p} \leq C\,s^{\min\{1,\mu\}}\,t^{(1-\mu)_+-2\left(1-\frac1p\right)}\, \|v_1\|_{L^1} \,,
\end{equation}
for some~$C>0$, independent of~$s,t$. If~$d(1,p)\geq\max\{\mu,2-\mu\}/2$, that is, $3-\max\{\mu,2-\mu\}\geq2/p$, by~\eqref{eq:rqdecaybad}, with~$r=1$ and~$q=p$, we obtain
\begin{equation}\label{eq:1qdecaybadn2}
\|v(t,\cdot)\|_{L^q} \leq C\,s^{\frac1p+\frac{\mu-1}2-\delta}\,t^{\delta-\frac{\mu+1}2+\frac1q}\, \|v_1\|_{L^1}\,,
\end{equation}
taking~$\eps\leq\delta$. Setting~$s=t_0$, thanks to \eqref{eq:1qdecaygood}-\eqref{eq:1qdecaybad}, we get~\eqref{eq:dataX1}. Now let~$u,w\in X(T)$. We want to prove~\eqref{eq:contraX}.

If~$3-\max\{\mu,2-\mu\}< 2/p$, using~\eqref{eq:1qdecaygoodn2}, we obtain
\begin{align*}
& t^{2\left(1-\frac1p\right)-(1-\mu)_+} \, \|(Fu-Fw)(t,\cdot)\|_{L^p}\\
    & \qquad \leq C\, \int_{t_0}^t s^{\min\{1,\mu\}-\alpha}\,\| (f(u)-f(w))(s,\cdot) \|_{L^1}\,ds \\
    & \qquad \leq C\,\int_{t_0}^t s^{\min\{1,\mu\}-\alpha-p\left(2\left(1-\frac1p\right)-(1-\mu)_+\right)}\,\,ds\,\|u-w\|_{X(T)}\big( \|u\|_{X(T)}^{p-1}+\|w\|_{X(T)}^{p-1}\big)\\
    & \qquad \leq C_1\,\|u-w\|_{X(T)}\big( \|u\|_{X(T)}^{p-1}+\|w\|_{X(T)}^{p-1}\big),
\end{align*}
where the fact that the integral
\[ \int_{t_0}^\infty s^{\min\{1,\mu\}-\alpha-p\left(2\left(1-\frac1p\right)-(1-\mu)_+\right)}\,\,ds \]
is convergent is a consequence of~$p>1+(2-\alpha)/(1+\min\{1,\mu\})$. If~$3-\max\{\mu,2-\mu\}\geq 2/p$, using~\eqref{eq:1qdecaybadn2}, we obtain
\begin{align*}
& t^{\frac{\mu+1}2-\frac1p-\delta} \, \|(Fu-Fw)(t,\cdot)\|_{L^p}\\
    & \qquad \leq C\, \int_{t_0}^t s^{-\alpha+\frac1p+\frac{\mu-1}2-\delta}\,\| f(u(s,\cdot))-f(w(s,\cdot)) \|_{L^1}\,ds \\
    & \qquad \leq C\,\int_{t_0}^t s^{-\alpha+\frac1p+\frac{\mu-1}2-\delta-p\left(\frac{\mu+1}2-\frac1p-\delta\right)}\,\,ds\,\|u-w\|_{X(T)}\big( \|u\|_{X(T)}^{p-1}+\|w\|_{X(T)}^{p-1}\big)\\
    & \qquad C_1\,\|u-w\|_{X(T)}\big( \|u\|_{X(T)}^{p-1}+\|w\|_{X(T)}^{p-1}\big),
\end{align*}
where the fact that the integral
\[ \int_{t_0}^\infty s^{-\alpha+\frac1p+\frac{\mu-1}2-\delta-p\left(\frac{\mu+1}2-\frac1p-\delta\right)}\,\,ds\]
is convergent is a consequence of~$p>p_\Str(2+\mu,\alpha)$, for a sufficiently small~$\delta>0$. This concludes the proof of Proposition~\ref{prop:gen2}.
\end{proof}

%%%%%%%%%%%%%%%%%%%%%%%%%%%%%%%%%%%%%%%%%%%%%%%%%%%%%%%%%%%%%%%%%%%%%

\section{Energy solutions for semilinear Tricomi generalized equations}\label{sec:Tricomi}

The strictly hyperbolic semilinear Cauchy problem for the generalized Tricomi equation
\begin{equation}\label{eq:CPwave}
\begin{cases}
w_{tt}- t^{2\ell}\,\triangle w = f(w), & t\geq t_1>0, \ x\in\R\,,\\
w(t_1,x)=0\,, \quad w_t(t_1,x)=w_1(x)\,,
\end{cases}
\end{equation}
where~$\ell>0$, is equivalent to the regular Cauchy problem~\eqref{eq:CPalpha} for the Euler-Poisson-Darboux equation with parameter~$\mu=\ell/(\ell+1)$, and power nonlinearity~$t^{-2\mu}f(u)$. Indeed, by the change of variable
\begin{equation}\label{eq:changewave}
w(t,x)=u(\Lambda(t),x),\quad \text{where}\ \Lambda(t)=\frac{t^{\ell+1}}{\ell+1}\,,
\end{equation}
noticing that~$w_t = t^\ell\,u_t(\Lambda(t),x)$, then problem~\eqref{eq:CPalpha} is equivalent to~\eqref{eq:CPwave}, with~$w_1=t_1^\ell\, u_1$, provided that
\begin{equation}\label{eq:changealpha}
\mu=\frac\ell{\ell+1},\quad \alpha=2\mu,
\end{equation}
and where
\[ %c_\ell = (\ell+1)^{-\alpha}, \quad
t_1=\Lambda^{-1}(t_0)=((\ell+1)t_0)^{\frac1{\ell+1}}. \]
Hence, in space dimension~$n=1$, we have the following consequence of Theorem~\ref{thm:maingen}.
\begin{corollary}\label{cor:waves}
Let~$n=1$ and~$\ell>0$, and assume that~$p>p_\crit=1+2/\ell$. Then there exists~$\eps>0$ such that for any
\begin{equation}\label{eq:dataell}
w_1\in L^1\cap L^2, \quad \text{with}\ \|w_1\|_{L^1}+\|w_1\|_{L^2}\leq\eps,
\end{equation}
there exists a unique $w\in\mathcal C([t_1,\infty),H^1)\cap\mathcal C^1([t_1,\infty),L^2)$, global-in-time energy solution to~\eqref{eq:CPwave}. Moreover, we have the energy estimate
\begin{equation}\label{eq:energyestell}
E(t) = \frac12\,\|w_t(t,\cdot)\|_{L^2}^2 + \frac12\,t^{2\ell}\,\|w_x(t,\cdot)\|_{L^2}^2 \leq C\,t^{\ell}\big(\|w_1\|_{L^1}^2+\|w_1\|_{L^2}^2\big),
\end{equation}
it holds~$w\in L^\infty([t_1,\infty),L^{p_\crit})$ if~$\ell>2$, and for any~$\delta>0$, the following decay estimate holds:
\begin{equation}\label{eq:decayell}
\|w(t,\cdot)\|_{L^q} \leq C\,\big(\|w_1\|_{L^1}+\|w_1\|_{L^2}\big)\times \begin{cases}
t^{-\ell+\frac{\ell+1}q} & \text{if~$q\in[1+2/\ell,2+2/\ell)$,}\\
t^{\delta-\frac\ell{2}} & \text{if~$2+2/\ell\leq q<\infty$,}
\end{cases}
\end{equation}
where~$C=C(t_1)>0$.
\end{corollary}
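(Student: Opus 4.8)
The plan is to obtain Corollary~\ref{cor:waves} directly from Theorem~\ref{thm:maingen} through the change of variable~\eqref{eq:changewave}, with no new analysis: problem~\eqref{eq:CPwave} has already been identified as equivalent to~\eqref{eq:CPalpha} with the parameters fixed by~\eqref{eq:changealpha}, namely $\mu=\ell/(\ell+1)$ and $\alpha=2\mu$. Given Tricomi data $w_1\in L^1\cap L^2$ I set $u_1=t_1^{-\ell}w_1$, which is again in $L^1\cap L^2$ with norm comparable to that of $w_1$ (the factor $t_1^{-\ell}$ is a fixed constant, since $t_1=((\ell+1)t_0)^{1/(\ell+1)}$), so that smallness of $w_1$ transfers to smallness of $u_1$. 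Theorem~\ref{thm:maingen} then yields a unique energy solution $u$ to~\eqref{eq:CPalpha}, and I recover $w(t,x)=u(\Lambda(t),x)$ as the solution to~\eqref{eq:CPwave}.

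First I would confirm that the critical exponents match. With $\mu=\ell/(\ell+1)\in(0,1)$ one has $2-\alpha=2-2\mu=2/(\ell+1)$ and $\min\{1,\mu\}=\mu$, so the modified Fujita exponent in~\eqref{eq:pcritalpha} equals $1+(2-\alpha)/\mu=1+2/\ell$. Since $\alpha=2\mu>\mu$, Remark~\ref{rem:findp} places us in the regime $p_\crit=1+(2-\alpha)/\mu$ (the case $0<\mu\le\alpha$ when $\alpha<1$, or $\mu\le1$ when $\alpha\ge1$), so the shifted Strauss exponent does not dominate and $p_\crit=1+2/\ell$, exactly as claimed. Likewise $p_\crit<2$ is equivalent to $\ell>2$, matching the condition under which $w\in L^\infty([t_1,\infty),L^{p_\crit})$ is asserted.

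The remaining work is pure bookkeeping of exponents under $\tau=\Lambda(t)=t^{\ell+1}/(\ell+1)$. Using $w_t=t^\ell u_\tau(\Lambda(t),\cdot)$ and $w_x=u_x(\Lambda(t),\cdot)$, the Tricomi energy factorizes as $E(t)=t^{2\ell}E_{\mathrm{EPD}}(\Lambda(t))$. Plugging in the estimate~\eqref{eq:energyest} for $0<\mu<3$, $\mu\neq1$ (applicable since $\mu\in(0,1)$), that is $E_{\mathrm{EPD}}(\tau)\le C A^2\tau^{-\mu}$, and using the key identity $(\ell+1)\mu=\ell$, I obtain $E(t)\le C A^2 t^{2\ell}t^{-(\ell+1)\mu}=CA^2 t^{\ell}$, which is~\eqref{eq:energyestell}. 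For the $L^q$ bounds, $\|w(t,\cdot)\|_{L^q}=\|u(\Lambda(t),\cdot)\|_{L^q}$, and I substitute~\eqref{eq:decay}. The threshold $2-2/q<\max\{\mu,2-\mu\}=2-\mu$ reads $q<2/\mu=2+2/\ell$, separating the two ranges of~\eqref{eq:decayell}. In the first range $(1-\mu)_+-1+1/q=-\mu+1/q$ is multiplied by $\ell+1$ under the substitution, giving the exponent $-\ell+(\ell+1)/q$; in the second range $\delta-\mu/2$ becomes $(\ell+1)\delta-\ell/2$, and after relabelling $\delta$ this is $\delta-\ell/2$.

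I do not expect a genuine obstacle, since everything reduces to a smooth, monotone change of the time variable mapping $[t_1,\infty)$ onto $[t_0,\infty)$. The only points requiring some care are: checking that the inhomogeneous EPD equation produced by~\eqref{eq:changewave} really has the claimed coefficient $\mu/\tau$ and nonlinearity $\tau^{-\alpha}f(u)$ (the computation $w_{tt}-t^{2\ell}\triangle w=t^{2\ell}(u_{\tau\tau}-\triangle u)+\ell t^{\ell-1}u_\tau$, divided by $t^{2\ell}$, gives $\ell t^{-\ell-1}=\mu/\tau$ and $t^{-2\ell}=(\ell+1)^{-2\mu}\tau^{-2\mu}$, the harmless constant being absorbed into $f$); and verifying that $w\in\mathcal C([t_1,\infty),H^1)\cap\mathcal C^1([t_1,\infty),L^2)$ follows from the corresponding regularity of $u$ together with the smoothness of $\Lambda$ and $\Lambda'=t^\ell$. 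The membership $w\in L^\infty([t_1,\infty),L^{p_\crit})$ for $\ell>2$ transfers immediately, as $\Lambda$ is a bijection of the two time intervals.
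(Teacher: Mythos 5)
Your proposal is correct and coincides with the paper's own proof: both apply Theorem~\ref{thm:maingen} with $\mu=\ell/(\ell+1)$, $\alpha=2\mu$, identify $p_\crit=1+(2-\alpha)/\mu=1+2/\ell$ via Remark~\ref{rem:findp}, and translate the energy and $L^q$ estimates through the substitution $\tau=\Lambda(t)$ using $(\ell+1)\mu=\ell$. Your exponent bookkeeping, including the threshold $q<2/\mu=2+2/\ell$ and the relabelling of $\delta$, matches the paper's computation exactly.
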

Similarly, Proposition~\ref{prop:gen2} gives the existence of global-in-time weak solutions to~\eqref{eq:CPwave} in space dimension~$n=2$, for~$\ell>2/3$. In this case~$p_\crit=p_\Str(1+\mu,\alpha)=p_\Str((2\ell+1)/(\ell+1),2\ell/(\ell+1))$ (see Remark~\ref{rem:findp2}).
\begin{remark}\label{rem:Tricomi}
The singular Cauchy problem for EPD equation with parameter~$\mu=\ell/(\ell+1)$ and nonlinearity~$t^{-2\mu}f(u)$ is equivalent to the semilinear weakly hyperbolic problem
\begin{equation}\label{eq:CPwavewhyp}
\begin{cases}
w_{tt}- t^{2\ell}\,\triangle w = f(w), & t>0, \ x\in\R\,,\\
w(0,x)=w_0(x)\,, \quad w_t(0,x)=0\,.
\end{cases}
\end{equation}
In space dimension~$n=1$, the global-in-time existence for~$p>1+2/\ell$ has been proved for small data weak solutions to~\eqref{eq:CPwavewhyp}, in the recent paper~\cite{HWY20}, see also~\cite{Ga}. The nonexistence of global-in-time weak solutions to~\eqref{eq:CPwave} or~\eqref{eq:CPwavewhyp} for~$p\in(1,1+2/\ell]$ in space dimension~$n=1$ %(and, more in general, for~$p\in(1,1+2/(n(\ell+1)-1)]$ in space dimension~$n\geq1$),
under suitable sign condition on~$w_1$ or~$w_0$, is proved in Theorem 3.1 in~\cite{DAL03}. In space dimension~$n\geq2$, in~\cite{HWY17a,HWY17} it is proved that the critical exponent for global small data weak solutions to~\eqref{eq:CPwavewhyp}, is
\[ \frac{(n-1)(\ell+1)-\ell}2\,(p_\crit-1)-(1-\ell)-\frac{\ell+1}{p_\crit}=0. \]
We stress that~$p_\crit=p_\Str(n+\mu-1,\alpha)$, where~$p_\Str$ is as in~\eqref{eq:Straussmod}, for~$\mu=\ell/(\ell+1)$ and~$\alpha=2\mu$.
\end{remark}
We may directly prove Corollary~\ref{cor:waves} as a consequence of Theorem~\ref{thm:maingen}, with~$\mu\in(0,1)$ and~$\alpha=2\mu$.
\begin{proof}[Proof of Corollary~\ref{cor:waves}]
We fix~$\mu$ and~$\alpha$ as in~\eqref{eq:changealpha} and we apply Theorem~\ref{thm:maingen}. Due to~$\mu\in(0,1)$, according to Remark~\ref{rem:findp}, we get that
\[ p_\crit=1+\frac{2-\alpha}\mu=1+\frac2\ell.\]
%
%if~$n=1$, and
%%
%\[ p_\crit=p_\Str(2+\mu,\alpha)=p_\Str(2+\ell/(\ell+1),2\ell/(\ell+1)),\]
%
%if~$n=2$. We also notice that the condition~$\mu\geq2(1-\alpha)$ if~$n=2$, gives us~$\mu\geq2/5$, that is, $\ell\geq2/3$.
Recalling that~$w_t = t^\ell\,u_t(\Lambda(t),x)$, by the energy estimate~\eqref{eq:energyest} we deduce
\begin{align*}
\|w_t(t,\cdot)\|_{L^2}^2 + t^{2\ell}\,\|w_x(t,\cdot)\|_{L^2}^2
    & = t^{2\ell} \big( \|u_t(\Lambda(t),\cdot)\|_{L^2}^2 + \|u_x(\Lambda(t),\cdot)\|_{L^2}^2 \big) \\
    & \leq C\,t^{2\ell} \,\Lambda(t)^{-\mu} = C_1\,t^{\ell}\,,
\end{align*}
so that we derive~\eqref{eq:energyestell}. Similarly, by~\eqref{eq:decay} we obtain
\begin{align*}
\|w(t,\cdot)\|_{L^q}
    & = \|u(\Lambda(t),\cdot)\|_{L^q} \\
    & \leq \begin{cases}
C\,\Lambda(t)^{-\mu+\frac1q}\,\big(\|v_1\|_{L^1}+\|v_1\|_{L^2}\big) & \text{if~$q\in[1+2/\ell,2+2/\ell)$,}\\
C\,\Lambda(t)^{\delta-\frac\mu2}\,\big(\|v_1\|_{L^1}+\|v_1\|_{L^2}\big) & \text{if~$2+2/\ell\leq q<\infty$.}
%\Lambda(t)^{\delta+\frac1q-\frac{\mu+1}2}\,\big(\|v_1\|_{L^1}+\|v_1\|_{L^2}\big) & \text{if~$n=2$ and~$q\in[p_\crit,6]$.}
\end{cases}
\end{align*}
Replacing
\[ \Lambda(t)^{-\frac\mu2}= c_1\,t^{-\frac\ell{2}}, \quad \Lambda(t)^{-\mu+\frac1q}= c_2\,t^{-\ell + \frac{\ell+1}{q}},\quad \Lambda(t)^{\frac1q-\frac12-\frac{\mu}2}=c_3\,t^{-\frac\ell2+(\ell+1)\left(\frac1q-\frac12\right)}, \]
we get~\eqref{eq:decayell}. This concludes the proof.
\end{proof}
More in general, Theorem~\ref{thm:maingen} may be applied to study semilinear waves with increasing polynomial speed of propagation and critical dissipation. Indeed, problem
\begin{equation}\label{eq:CPwavegen}
\begin{cases}
w_{tt}- t^{2\ell}\,\triangle w + \dfrac\nu{t}\, w_t = f(w), & t\geq t_1, \ x\in\R^n\,,\\
w(t_1,x)=0\,, \quad w_t(t_1,x)=w_1(x)\,,
\end{cases}
\end{equation}
with~$\ell>0$ and~$\nu>-\ell$, is equivalent to problem~\eqref{eq:CPalpha} with %%% SE NON FAI ELL>0 NON VIENE ALPHA POSITIVO
\begin{equation}\label{eq:changealphagen}
\mu=\frac{\nu+\ell}{\ell+1},\quad \alpha=\frac{2\ell}{\ell+1}.
\end{equation}
%
%We mention that condition~$\nu>-\ell$ corresponds to the dissipative condition obtained by taking into account of variable speed of propagation~\cite{DAE13}.
%
\begin{proposition}\label{prop:wavesdiss}
Let~$n=1$ and~$\ell>0$. %Assume that~$n=1$ and~$\nu>-\ell$, or~$n=2$ and~$\nu\geq 2-3\ell$ if~$\ell\in(-1,1]$ or~$\nu\geq1-2\ell$ if~$\ell>1$.
Applying Theorem~\ref{thm:maingen} to~\eqref{eq:CPwavegen}, we find that global-in-time small data energy solutions exist for~$p>p_\crit$, where we distinguish three cases:
\begin{itemize}
\item if~$\mu\geq\max\{1,\bar\mu\}$, that is, $\nu\geq\max\{1,\bar\mu\}$, where
\[ \bar\nu = -\ell + \frac{4(\ell+1)}{3+\ell}, \]
%\[ \nu \geq -\ell + (\ell+1)\Big( n-1 + \frac4{(n+2)(1+\ell)-2\ell}\Big), \]
%
then
\[ p_\crit = 1+ \frac2{1+\ell}; \]
\item if~$0<\mu\leq\min\{1,\alpha\}$, that is, $-\ell<\nu\leq\min\{1,\ell\}$, or~$\alpha\in[1,2)$ and~$\mu\leq1$, that is, $-\ell<\nu\leq1\leq\ell$ then
\[ p_\crit = 1 + \frac2\ell; \]
%
%\item if~$n=2$, $\mu\leq\alpha-1$, that is, $\nu\leq-1$, then
%%
%\[ p_\crit = 1 + \frac2{1+2\ell}; \]
%
\item if~$\alpha\in[0,1)$ and~$\alpha<\mu<\bar\mu$, that is, $\ell<1$ and $\ell<\nu<\bar\nu$, then $p_\crit$ is the solution to~\eqref{eq:Straussmod}, i.e.
\[ \frac{\nu+\ell}2(p-1)+\ell-1-\frac{\ell+1}{p}=0.\]
\end{itemize}
Energy estimates and estimates for~$\|w(t,\cdot)\|_{L^q}$, with~$q\in[p_\crit,\infty)$, are derived accordingly by~\eqref{eq:energyest} and~\eqref{eq:decay}, as we did in the proof of Corollary~\ref{cor:waves}. We omit the details for brevity.
\end{proposition}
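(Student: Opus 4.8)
The plan is to obtain Proposition~\ref{prop:wavesdiss} as a direct consequence of Theorem~\ref{thm:maingen}, following the reduction used for Corollary~\ref{cor:waves}, the single new point being that the dissipative term $\nu t^{-1}w_t$ must be carried through the change of variables. First I would set $w(t,x)=u(\Lambda(t),x)$ with $\Lambda(t)=t^{\ell+1}/(\ell+1)$ as in~\eqref{eq:changewave}; since $\Lambda'(t)=t^\ell$ and $\Lambda$ is an increasing bijection of $[t_1,\infty)$ onto $[t_0,\infty)$ with $t_0=\Lambda(t_1)$, one has $w_t=t^\ell\,u_t(\Lambda(t),x)$ and $w_{tt}=t^{2\ell}\,u_{tt}(\Lambda(t),x)+\ell\,t^{\ell-1}\,u_t(\Lambda(t),x)$. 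Substituting yields
\[ w_{tt}-t^{2\ell}\,\triangle w+\frac{\nu}{t}\,w_t = t^{2\ell}\,\big(u_{tt}-\triangle u\big)(\Lambda(t),x)+(\ell+\nu)\,t^{\ell-1}\,u_t(\Lambda(t),x), \]
so that the imposed damping $\nu$ and the geometric contribution $\ell$ produced by $w_{tt}$ merge into the single coefficient $\ell+\nu$; this is the only feature absent from the pure Tricomi case of Corollary~\ref{cor:waves}.

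Next I would divide by $t^{2\ell}$ and use $t^{\ell+1}=(\ell+1)\Lambda(t)$, turning $(\ell+\nu)t^{-\ell-1}$ into $\tfrac{\ell+\nu}{\ell+1}\,\Lambda(t)^{-1}$ and $t^{-2\ell}$ into $((\ell+1)\Lambda(t))^{-\alpha}$ with $\alpha=2\ell/(\ell+1)$. Hence $w$ solves~\eqref{eq:CPwavegen} if and only if $u$ solves~\eqref{eq:CPalpha} with the parameters~\eqref{eq:changealphagen} and nonlinearity $(\ell+1)^{-\alpha}f(u)$, a harmless constant multiple of $f$ still satisfying~\eqref{eq:fu}. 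The data correspond through $w_1=t_1^\ell\,u_1$, so $L^1\cap L^2$ smallness of $w_1$ is equivalent to that of $u_1$ up to the fixed factor $t_1^{-\ell}$. Since $\ell>0$ forces $\alpha\in(0,2)$ and $\nu>-\ell$ forces $\mu>0$, all hypotheses of Theorem~\ref{thm:maingen} hold, and it provides the unique global energy solution for $p>p_\crit$, with $p_\crit$ as in~\eqref{eq:pcritalpha}.

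It then remains to express $p_\crit$ in the variables $(\nu,\ell)$, which is the most bookkeeping-heavy step and the one I expect to be the main (if modest) obstacle. I would insert $\mu=(\nu+\ell)/(\ell+1)$ and $\alpha=2\ell/(\ell+1)$ into the trichotomy of Remark~\ref{rem:findp}, using $2-\alpha=2/(\ell+1)$ and $\alpha<1\iff\ell<1$. The threshold $\bar\mu$ of~\eqref{eq:barmualpha} becomes $\bar\mu=4/(\ell+3)$, so the associated threshold for $\nu$ is $\bar\nu=(\ell+1)\bar\mu-\ell=-\ell+\tfrac{4(\ell+1)}{\ell+3}$. The modified Fujita value $3-\alpha$ simplifies to $1+2/(\ell+1)$ and governs the first case $\mu\geq\max\{1,\bar\mu\}$, i.e.\ $\nu\geq\max\{1,\bar\nu\}$; the complementary modified-Fujita regime, namely $-\ell<\nu\leq\min\{1,\ell\}$ or $-\ell<\nu\leq1\leq\ell$, gives the second case; and the intermediate Strauss band $\alpha<\mu<\bar\mu$, possible only for $\ell<1$ and amounting to $\ell<\nu<\bar\nu$, gives the third case, where $p_\crit$ solves~\eqref{eq:Straussmod} with $k=1+\mu$ and, after clearing denominators, becomes $\tfrac{\nu+\ell}{2}(p-1)+\ell-1-\tfrac{\ell+1}{p}=0$.

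Finally, the energy estimate and the $L^q$ decay for $w$ would follow by pulling~\eqref{eq:energyest} and~\eqref{eq:decay} back through $\tau=\Lambda(t)$, using $w_t=t^\ell\,u_t(\Lambda(t),x)$ together with the elementary identity $\Lambda(t)^{\theta}=c_\theta\,t^{(\ell+1)\theta}$ to convert each power of $\Lambda(t)$ into a power of $t$, exactly as in the closing computation of the proof of Corollary~\ref{cor:waves}; these are the details the statement omits for brevity.
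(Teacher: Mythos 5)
Your reduction is exactly the route the paper intends: the text preceding the proposition only records the equivalence of~\eqref{eq:CPwavegen} with~\eqref{eq:CPalpha} under~\eqref{eq:changewave} and~\eqref{eq:changealphagen} and then invokes Theorem~\ref{thm:maingen}, and your fleshed-out version of this is correct. In particular, the merging of the geometric term $\ell t^{\ell-1}u_t$ produced by $w_{tt}$ with the imposed damping into $\mu=(\nu+\ell)/(\ell+1)$, the harmless constant $(\ell+1)^{-\alpha}$ multiplying $f$, the data correspondence $w_1=t_1^\ell u_1$, the verification that $\ell>0$, $\nu>-\ell$ put you in the admissible range $\alpha\in(0,2)$, $\mu>0$, and the translations $\bar\mu=4/(\ell+3)$, $\bar\nu=(\ell+1)\bar\mu-\ell$, $3-\alpha=1+2/(\ell+1)$, and of~\eqref{eq:Straussmod} with $k=1+\mu$ into $\tfrac{\nu+\ell}2(p-1)+\ell-1-\tfrac{\ell+1}p=0$ are all right, and match what one obtains by running Remark~\ref{rem:findp} through~\eqref{eq:changealphagen}.

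There is, however, one gap, and it sits precisely at the one computation you declined to carry out. In the second (modified Fujita, small $\mu$) regime you merely say the conditions ``give the second case,'' implicitly endorsing the printed value $p_\crit=1+2/\ell$. But your own method forces
\[ p_\crit = 1+\frac{2-\alpha}{\mu} = 1+\frac{2/(\ell+1)}{(\nu+\ell)/(\ell+1)} = 1+\frac2{\nu+\ell}\,, \]
which coincides with $1+2/\ell$ only when $\nu=0$, i.e., in the pure Tricomi case of Corollary~\ref{cor:waves}. A concrete check: for $\ell=1$, $\nu=1/2$ one has $\alpha=1$, $\mu=3/4\leq1$, so Remark~\ref{rem:findp} gives $p_\crit=1+4/3=7/3$, whereas the proposition as printed would give $3$. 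So the second bullet of the statement carries a misprint (the homogeneity in $\nu$ visible in the third bullet, which involves $\nu+\ell$, already suggests this), and a complete write-up should derive $1+2/(\nu+\ell)$ and flag the discrepancy rather than pass over it. Relatedly, you silently (and correctly) repaired the statement's ``$\nu\geq\max\{1,\bar\mu\}$'' in the first bullet to $\nu\geq\max\{1,\bar\nu\}$; that correction should also be made explicit. With these two points spelled out, your argument is complete and is the same proof the paper omits.
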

Some results about semilinear waves with time-dependent speed of propagation and effective dissipation are collected in~\cite{BR13+}; roughly speaking, they shall correspond to take~$\mu=\infty$.

\section{Proof of Theorem~\ref{thm:main2}}\label{sec:proof2}

In space dimension~$n\geq3$, it is not possible to apply Corollary~\ref{cor:main} with~$r=1$, due to~$d(1,q)>1$ for any~$q>1$. The same is true in space dimension~$n=2$, for any~$q>2$. For this reason, we use Corollary~\ref{cor:main2} to prove Theorem~\ref{thm:main2}.

%As a consequence we cannot recover a range of critical exponents for all possible values of~$\mu$. Still, we may prove global-in-time existence of small data energy solutions to~\eqref{eq:CP} for any~$p>1+2/n$ (and~$p\leq 1+2/(n-2)$, due to the Sobolev embedding~$H^1(\R^n)\hookrightarrow L^{2p}(\R^n)$) for sufficiently large~$\mu$, namely, for~$\mu\geq n$, in space dimension~$n=3,4,5$. This result improves the threshold~$n+2$, previously known for the existence result~\cite{DA15}, arriving much closer to the theoretical threshold~$n-1+4/(n+2)$. The exponent is still critical, but it is not clear if the threshold may be lowered or not, possibly considering solutions in different functional spaces.
%

%

%
%We now want to compute
%%
%\[ \frac12-\frac1q-p\gamma_{r_2(q)p}. \]
%%
%We notice that, due to
%%
%\[ \frac2{r_2(q)} = \frac32+\frac1q, \]
%%
%and~$q\geq2$, $p_\crit\leq2$, it follows that
%%
%\[ \frac1{p_\crit r_2(q)} - \frac1q \geq \frac1{2r_2(q)}-\frac1q = \frac38-\frac3{4q}\geq0, \]
%%
%that is, $r_2(q)p_\crit\leq q$. As a consequence, $3-\mu<2/q$ implies $3-\mu<2/(r_2(q)p_\crit)$, so that
%%
%\begin{equation}\label{eq:r2q}
%\frac12-\frac1q-p\gamma_{r_2(q)p} < \frac12-\frac1q-p_\crit\gamma_{r_2(q)p_\crit} = \frac12-\frac1q-2\left(p_\crit-\frac1{r_2(q)}\right) = 2(1-p_\crit).
%\end{equation}
%%
%Therefore, the integral
%%
%\[ \int_{t_0}^\infty s^{1-\alpha}\,\big(s^{-p\gamma_p}+s^{\frac12-\frac1q-p\gamma_{r_2(q)p}}\big)\,\,ds, \]
%%
%is convergent.

\begin{proof}[Proof of Theorem~\ref{thm:main2}]
As in the proof of Theorem~\ref{thm:maingen}, for a general~$T>t_0$, we fix~$X(T)$ as a subspace of the energy space~$\mathcal C([t_0,T],H^1)\cap\mathcal C^1([t_0,T],L^2)$ if~$n=2$, and of~$\mathcal C([t_0,T],H^1)\cap\mathcal C^1([t_0,T],L^2)\cap L^\infty([t_0,T],L^{p_\crit})$ if~$n\geq3$, and we prove that there exists a constant~$C=C(t_0)$, independent of~$T$, such that~\eqref{eq:dataX} and~\eqref{eq:contraX} hold. Properties \eqref{eq:dataX}-\eqref{eq:contraX} imply that there exists~$\eps>0$ such that if~$v_1$ verifies~\eqref{eq:data}, then there is a unique global-in-time solution to~\eqref{eq:CP}, verifying
\[ \|u\|_{X(T)} \leq C\,\big(\|v_1\|_{L^1}+\|v_1\|_{L^2}\big)\,, \]
for any~$T>t_0$, with~$C=C(t_0)$, independent of~$T$. We recall that~$p_\crit=1+2/n$.

We fix~$g(t)$ as in~\eqref{eq:eng}, and we set
\begin{equation}\label{eq:gammaq1n}
\forall q\in[p_\crit,2+4/(n-1)]: \ \gamma_q=\begin{cases}
n\left(1-1/q\right) & \text{if~$\mu>n+1-2/q$,}\\
(\mu+n-1)/2-(n-1)/q-\delta & \text{if~$\mu\leq n+1-2/q$,}
\end{cases}
\end{equation}
for a sufficiently small~$\delta>0$, which we will fix later. For the ease of notation, we also define
\[ \gamma_q = \gamma_{2+4/(n-1)}, \qquad \forall q\in \Big(2+\frac4{n-1}, 2+\frac4{n-2}\Big]. \]
We fix
\begin{equation}\label{eq:normaXp}
\|u\|_{X(T)} = \sup_{t\in[t_0,T]} \Bigl( (g(t))^{-1}\, \|(u_t,\nabla u)(t,\cdot)\|_{L^2} + \sup \big\{ t^{\gamma_q}\,\|u(t,\cdot)\|_{L^q}: \ q\in[p_\crit,2+4/(n-2)] \big\} \Bigr).
\end{equation}
With this choice of norm on~$X(T)$, the solution to the linear problem~\eqref{eq:CPlinear} verifies~\eqref{eq:dataX}. Indeed, due to~$q\leq 2(n+1)/(n-1)$, we may apply Corollary~\ref{cor:main2} with~$r_2=r_2(q)$, verifying~$d(r_2(q),q)=1$.

If~$p_\crit\leq q<2/(n+1-\mu)$, we obtain
\begin{equation}\label{eq:1qdecaygoodn}
\|v(t,\cdot)\|_{L^q} \leq C\,s\,t^{-n\left(1-\frac1q\right)+\delta}\, \big(\|v_1\|_{L^1}+s^{\frac{n-1}2-\frac1q}\,\|v_1\|_{L^{r_2}}\big) \,,
\end{equation}
for some~$C>0$, independent of~$s,t$. If~$q\geq2/(n+1-\mu)$, taking~$\eps\leq\delta$, we obtain:
\begin{equation}\label{eq:1qdecaybadn}
\|v(t,\cdot)\|_{L^q} \leq C\,s^{\frac\mu2-\delta}\,t^{\delta-(n-1)\left(\frac12-\frac1q\right)-\frac\mu2}\, \big(s^{-\frac{n-1}2+\frac1q}\|v_1\|_{L^1}+\| v_1\|_{L^{r_2}}\big)\,.
\end{equation}
Setting~$s=t_0$, thanks to \eqref{eq:1qdecaygoodn}, \eqref{eq:1qdecaybadn}, and Proposition~\ref{prop:energy}, we get~\eqref{eq:dataX}.

Now let~$u,w\in X(T)$ and~$q\in[p_\crit,2+4/(n-1)]$.

If~$p_\crit\leq q<2/(n+1-\mu)$, we obtain
\begin{equation}\label{eq:1qdecayFgoodn}
\begin{split}
& t^{n\left(1-\frac1q\right)-\delta}\,\|(Fu-Fw)(t,\cdot)\|_{L^q} \\
    & \quad \leq C\,\int_{t_0}^t s\,\big(\| (f(u)-f(w))(s,\cdot) \|_{L^1}+ s^{\frac{n-1}2-\frac1q}\,\| (f(u)-f(w))(s,\cdot) \|_{L^{r_2(q)}}\big)\,ds,
\end{split}\end{equation}
for some~$C>0$, independent of~$t_0,t$. If~$q\geq2/(n+1-\mu)$, taking~$\eps\leq\delta$, we obtain:
\begin{equation}\label{eq:1qdecayFbadn}
\begin{split}
& t^{-\delta+(n-1)\left(\frac12-\frac1q\right)-\frac\mu2}\,\|(Fu-Fw)(t,\cdot)\|_{L^q}\\
    &\quad \leq C\,\int_{t_0}^t s^{\frac\mu2-\delta}\, \big(s^{-\frac{n-1}2+\frac1q}\|(f(u)-f(w))(s,\cdot)\|_{L^1}+\|(f(u)-f(w))(s,\cdot)\|_{L^{r_2(q)}}\big)\,.
\end{split}\end{equation}
Using~\eqref{eq:fu} with H\"older inequality, and the fact that~$u,w\in X(T)$, we may estimate~$\|(f(u)-f(w))(s,\cdot)\|_{L^1}$ as in~\eqref{eq:fu1}, and we may estimate
\begin{equation}\label{eq:fur}
\begin{split}
\|(f(u)-f(w))(s,\cdot)\|_{L^{r_2(q)}}
    & \leq C\, \|(u-w)(s,\cdot)\|_{L^{r_2(q)p}}\,\big( \|u(s,\cdot)\|_{L^{r_2(q)p}}^{p-1}+\|w(s,\cdot)\|_{L^{r_2(q)p}}^{p-1}\big) \\
    & \leq C\,s^{-p\gamma_{r_2(q)p}}\,\|u-w\|_{X(T)}\big( \|u\|_{X(T)}^{p-1}+\|w\|_{X(T)}^{p-1}\big).
\end{split}\end{equation}
If~$p_\crit\leq q<2/(n+1-\mu)$, we want to prove that the integral
\begin{equation}\label{eq:intgood}
\int_{t_0}^\infty s\,\big( s^{-p\gamma_p} + s^{\frac{n-1}2-\frac1q-p\gamma_{r_2(q)p}} \big) \,ds
\end{equation}
is convergent. If~$q\geq2/(n+1-\mu)$, we want to prove that the integral
\begin{equation}\label{eq:intbad}
\int_{t_0}^\infty s^{\frac\mu2-\delta}\,\big( s^{-\frac{n-1}2+\frac1q-p\gamma_p} + s^{-p\gamma_{r_2(q)p}} \big) \,ds
\end{equation}
is convergent.

It is now crucial to remark that the property that~$r_2(q)p_\crit\leq 2+4/(n-1)$ for any~$q\leq 2+4/(n-1)$ is true, since this property guarantees that~$\gamma_{r_2(q)p_\crit}$ is described by~\eqref{eq:gammaq1n}. The property is true, since:
\[ q\leq \frac{2(n+1)}{n-1}\Rightarrow r_2(q)\leq \frac{2(n+1)}{n+3} \Rightarrow r_2(q)\frac{n+2}n < \frac{2(n+1)}{n-1}\,. \]
First we consider~$q<2/(n+1-\mu)$ and we prove that~\eqref{eq:intgood} is convergent. It is clear that
\[ p\gamma_p>p_\crit\,\gamma_{p_\crit} = n(p_\crit-1) = 2.\]
On the other hand, due to~$d(r_2,q)=1$, that is,
\[ \frac1{r_2}=\frac1n\left(\frac{n+1}2+\frac1q\right), \]
we find
\[ \frac1{p_\crit r_2}-\frac1q=\frac1{n+2}\left(\frac{n+1}2+\frac1q\right)-\frac1q=\frac{n+1}{n+2}\left(\frac12-\frac1q\right), \]
that is, $p_\crit r_2(q)\leq q$ if, and only if, $q\geq2$. In particular, $p_\crit r_2(q)\leq \max\{2,q\}$. As a consequence, since we assumed~$\mu\geq n$, and~$q<2/(n+1-\mu)$, we also get
\[ \gamma_{r_2(q)p_\crit}=n\left(1-\frac1{r_2(q)p_\crit}\right).\]
Using~$d(r_2(q),q)=1$, we obtain
\begin{align*}
\frac{n-1}2-\frac1q-p\gamma_{r_2(q)p}
    & < \frac{n-1}2-\frac1q-p_\crit\gamma_{r_2(q)p_\crit} \\
    & =\frac{n-1}2-\frac1q - n(p_\crit-1)-n\left(1-\frac1{r_2(q)}\right) \\
    & = - n(p_\crit-1) = -2.
\end{align*}
This proves that~\eqref{eq:intgood} is convergent.

Now we consider~$q\geq 2/(n+1-\mu)$ and we prove that~\eqref{eq:intbad} is convergent. It is clear that
\[ \frac\mu2-\delta-\frac{n-1}2+\frac1q-p\gamma_p < \frac\mu2-\frac{n-1}2+\frac1q-2 \leq -1, \]
for a sufficiently small~$\delta$, due to~$p\gamma_p>2$. We distinguish two subcases. If
\[ n+1-\frac2{r_2(q)p_\crit}< \mu \leq n+1-\frac2q, \]
then
\begin{align*}
\frac\mu2-\delta-p\gamma_{r_2(q)p}
    &  < \frac\mu2 -p_\crit \gamma_{r_2(q)p_\crit} = \frac\mu2 -2-n\left(1-\frac1{r_2(q)}\right) \\
    & \leq -2 -\frac{n-1}2-\frac1q+\frac{n}{r_2(q)}=-1.
\end{align*}
On the other hand, if
\[ \mu\leq n+1-\frac2{r_2(q)p_\crit}, \]
then
\[ \frac\mu2-\delta-p\gamma_{r_2(q)p} < \frac\mu2 -p_\crit \left(\frac{\mu+n-1}2-\frac{n-1}{r_2(q)p_\crit}\right), \]
for a sufficiently small~$\delta$. We may estimate
\begin{align*}
\frac\mu2 -p_\crit \left(\frac{\mu+n-1}2-\frac{n-1}{r_2(q)p_\crit}\right)
    & = \frac\mu2 -\frac{n+2}n \, \frac{\mu+n-1}2 + \frac{n-1}{r_2(q)}\\
    & = -\frac\mu{n}-\frac{n-1}n\left(\frac12-\frac1q\right)\leq-1,
\end{align*}
where in the last inequality we used~$\mu\geq n$ and~$q\geq2$.

This proves that~\eqref{eq:intbad} is convergent.

Summarizing, so far we proved that
\[ \|(Fu-Fw)(t,\cdot)\|_{L^q} \leq C(t_0)\,t^{-\gamma_q}\,\|u-w\|_{X(T)}\big( \|u\|_{X(T)}^{p-1}+\|w\|_{X(T)}^{p-1}\big), \]
for any~$q\in[p_\crit,2(n+1)/(n-1)]$.

We now consider the energy estimates. Exception given for the case~$\mu=n+2$, for which an additional logarithmic power of~$s$ appears in the integral, by Proposition~\ref{prop:energy}, we obtain:
\begin{align*}
& (g(t))^{-1}\, \|(\nabla,\partial_t)(Fu-Fw)(t,\cdot)\|_{L^2}\\
    & \qquad \leq C\,\int_{t_0}^t s^{\min\{1,\frac{\mu-n}2\}}\,\big(\| (f(u)-f(w))(s,\cdot) \|_{L^1}+s^{\frac{n}2}\,\| (f(u)-f(w))(s,\cdot) \|_{L^2}\big)\,ds \\
    & \qquad \leq C\,\int_{t_0}^t \big( s^{\frac{\min\{2,\mu-n\}}2-p\gamma_p}+s^{\frac{\min\{n+2,\mu\}}2-p\gamma_{2p}}\big)\,\,ds\,\|u-w\|_{X(T)}\big( \|u\|_{X(T)}^{p-1}+\|w\|_{X(T)}^{p-1}\big).%\\
%    & \leq C(t_0)\,g(t)\,\|u-w\|_{X(T)}\big( \|u\|_{X(T)}^{p-1}+\|w\|_{X(T)}^{p-1}\big),
\end{align*}
It is clear that
\[ \int_{t_0}^\infty s^{\frac{\min\{2,\mu-n\}}2-p\gamma_p}\,ds, \]
is convergent, due to~$p\gamma_p>2$. On the other hand, if~$n+2/(n+2)<\mu$, then we immediately get
\begin{align*}
\frac{\min\{n+2,\mu\}}2-p\gamma_{2p}
    & < \frac{\min\{n+2,\mu\}}2-p_\crit\gamma_{2p_\crit} \\
    & = \begin{cases}
-1 & \text{if~$\mu\geq n+2$,}\\
\frac\mu2 - \frac{n+4}2 <-1 & \text{if~$n+2/(n+2)<\mu<n+2$,}
\end{cases}
\end{align*}
whereas, if~$\mu\leq n+2/(n+2)$, then, for a sufficiently small~$\delta$,
\begin{align*}
\frac{\min\{n+2,\mu\}}2-p\gamma_{2p}
    & < \frac\mu2 - p_\crit \left( \frac{\mu+n-1}2-\frac{n-1}{2p_\crit} \right)\\
    & = -\frac\mu{n} - \frac{n-1}n < -1.
\end{align*}
Therefore,
\[ \|(\nabla,\partial_t)(Fu-Fw)(t,\cdot)\|_{L^2}\leq C(t_0)\,g(t)\,\|u-w\|_{X(T)}\big( \|u\|_{X(T)}^{p-1}+\|w\|_{X(T)}^{p-1}\big).\]
Summarizing, we proved~\eqref{eq:contraX}, and this concludes the proof of Theorem~\ref{thm:main2}.
\end{proof}

%%%%%%%%%%%%%%%%%%%%%%%%%%%%%%%%%%%%%%%%%%%%%%%%%

\section{Proof of Theorem~\ref{thm:L2}}\label{sec:proofL2}

Here we prove Theorem~\ref{thm:L2}. The key to find global-in-time (weak) solutions in large space dimension is related to the use $L^2-L^q$ estimates for the solution to the linear problem~\eqref{eq:CPlinear}, which feels the restriction of initial datum to be ``only'' in~$L^2$, and more general $L^r-L^q$ estimates, where~$r=r(q)$, to estimate~$Fu$.
\begin{remark}\label{rem:weak}
Before proving Theorem~\ref{thm:L2}, we motivate the choice of the solution regularity $u\in L^\infty([t_0,\infty),L^{q_0}\cap L^{q_1})$, where
\[
q_0=2+\frac4{n+1}, \quad q_1=2+\frac4{n-1}\,,
\]
as in~\eqref{eq:q0q1}, and of the restriction~$p\leq q_1-1 = 1+4/(n-1)$. We define~$r=r(q)$ as the solution to~$d(r(q),q)=1$. Due to~$d(q_1',q_1)=1$, we find that
\begin{equation}\label{eq:rq}
\frac{n}{r(q)} = \frac{n+1}2+\frac1q,
\end{equation}
for any~$q\in [2, q_1]$. In this sense, the number $q_1$ is the largest exponent~$q$ such that there exists~$r\leq q'$ with~$d(r,q)\leq1$.

The restriction~$p\leq q_1-1$ is used to have~$r(q_1)p\leq r(q_1)(q_1-1)=q_1'(q_1-1)=q_1$, for any~$p\leq q_1-1$. As a consequence, it is clear that
\[ \forall p\leq q_1-1, \quad \forall q\in[2,q_1]: \quad r(q)p \leq q_1. \]
Similarly, the choice of~$q_0$ is motivated by~$p_\crit=1+4/n$. Indeed, $q_0$ is chosen so that~$r(q_0)p_\crit=q_0$, due to
\[ \frac1{r(q_0)p_\crit} = \frac1{n+4}\,\frac{n}{r(q_0)} = \frac1{n+4}\left( \frac{n+1}2+\frac1{q_0} \right) = \frac{n+1}{n+4} \left(\frac12+\frac1{2(n+3)}\right)=\frac1{q_0}\,. \]
Moreover, due to the fact that~$q/r(q)$ is an increasing function with respect to~$q$ (since~$(q/r(q))'=(n+1)/(2n)>0$), we also deduce that
\begin{equation}\label{eq:rqp}
r(q)p_\crit \leq q.
\end{equation}
In particular, we proved that for any~$p\in[1+4/n,1+4/(n-1)]$ and for any~$q\in[q_0,q_1]$, it holds~$r(q)p\in[q_0,q_1]$.
\end{remark}
\begin{proof}[Proof of Theorem~\ref{thm:L2}]
As in the proof of Theorem~\ref{thm:maingen}, for a general~$T>t_0$, we fix~$X(T)$ as a subspace of the space~$L^\infty([t_0,T],L^{q_0}\cap L^{q_1})$, and we prove that there exists a constant~$C=C(t_0)$, independent of~$T$, such that we have
\begin{equation}\label{eq:dataX2}
\|v\|_{X(T)} \leq C\,\|v_1\|_{L^2}\,,
\end{equation}
and~\eqref{eq:contraX} holds.

Properties \eqref{eq:dataX2} and~\eqref{eq:contraX} imply that there exists~$\eps>0$ such that if~$v_1$ verifies~\eqref{eq:data2}, then there is a unique global-in-time solution to~\eqref{eq:CP}, verifying
\[ \|u\|_{X(T)} \leq C\,\|v_1\|_{L^2}\,, \]
for any~$T>t_0$, with~$C=C(t_0)$, independent of~$T$.

We assume that~$\mu>1$ if~$n=3$, postponing this exceptional case later on. For any~$q\in[q_0,q_1]$, we set
\[ \gamma_q = \frac12\,\min\{n(1-2/q),\mu\} = \begin{cases}
n\left(\frac12-\frac1q\right) & \text{if~$\mu\geq2$ or~$n(1-2/q)\leq\mu$,}\\
\frac\mu2 & \text{if~$\mu\in(1,2)$ and~$n(1-2/q)\geq\mu$.}
\end{cases} \]
Let~$X(T)$ be the subspace of functions in~$L^\infty([t_0,T],L^{q_0}\cap L^{q_1})$, verifying
\begin{equation}\label{eq:normaX2}
\|u\|_{X(T)} = \sup \big\{ t^{\gamma_q}\,\|u(t,\cdot)\|_{L^q}: \ q\in[q_0,q_1], \quad t\in[t_0,T] \big\} \Bigr).
\end{equation}
With this choice of norm on~$X(T)$, the solution to the linear problem~\eqref{eq:CPlinear} verifies~\eqref{eq:dataX2}. Indeed, we may apply Corollary~\ref{cor:main} with~$r=2$, and we get
\begin{equation}\label{eq:2qdecay}
\|v(t,\cdot)\|_{L^q} \leq C\,\|v_1\|_{L^2} \,\times\begin{cases}
s\,t^{-n\left(\frac12-\frac1q\right)} & \text{if~$\mu\geq2$ or~$n(1-2/q)\leq\mu$,}\\
s^{1-n\left(\frac12-\frac1q\right)+\frac\mu2}\,t^{-\frac\mu2} & \text{if~$\mu\in(1,2)$ and~$n(1-2/q)\geq\mu$,}
\end{cases}\end{equation}
for some~$C>0$, independent of~$s,t$. Setting~$s=t_0$, we find~\eqref{eq:dataX2}.

Now let~$u,w\in X(T)$, $q\in[q_0,q_1]$, and~$r(q)$ as in~\eqref{eq:rq}. Due to the fact that~$r(q)\leq q_1'<2$ for any~$q\in[q_0,q_1]$, using~$s\leq t$ in~\eqref{eq:rqdecaygood}, we may obtain the same estimate as in~\eqref{eq:2qdecay}, with an extra decay~$t^{-\delta}$, but replacing~$\|v_1\|_{L^2}$ by~$s^{\delta-n\left(\frac1r-\frac12\right)}\,\|v_1\|_{L^r}$, for some~$\delta>0$, namely,
\begin{equation}\label{eq:r2qdecay}
\|v(t,\cdot)\|_{L^q} \leq C\,s^{\delta-n\left(\frac1r-\frac12\right)}\,t^{-\delta}\,\|v_1\|_{L^r} \,\times\begin{cases}
s\,t^{-n\left(\frac12-\frac1q\right)} & \text{if~$\mu\geq2$ or~$n(1-2/q)\leq\mu$,}\\
s^{1-n\left(\frac12-\frac1q\right)+\frac\mu2}\,t^{-\frac\mu2} & \text{if~$\mu\in(1,2)$ and~$n(1-2/q)\geq\mu$.}
\end{cases}\end{equation}
The use of~$s^{-n\left(\frac1r-\frac12\right)}\,\|v_1\|_{L^r}$ instead of~$\|v_1\|_{L^2}$ to estimate~$Fu$ is the key to obtain a global-in-time existence result in any space dimension~$n\geq3$. We also stress that the fact that~$\delta$ is positive is used only when~$p=p_\crit$. In the case~$p>p_\crit$, the proof would also work with~$\delta=0$.

We are now ready to estimate~$(Fu-Fw)(t,\cdot)$ in~$L^q$.

If~$\mu\geq2$ or~$n(1-2/q)\leq\mu$, by~\eqref{eq:r2qdecay} we obtain
\begin{align*}
& t^{\delta+n\left(\frac12-\frac1q\right)}\,\|(Fu-Fw)(t,\cdot)\|_{L^q}\\
    & \qquad \leq C\, \int_{t_0}^t s^{\delta+1-n\left(\frac1{r(q)}-\frac12\right)}\,\| (f(u)-f(w))(s,\cdot) \|_{L^r}\,ds \\
    & \qquad \leq C\, \int_{t_0}^t s^{\delta+1-n\left(\frac1{r(q)}-\frac12\right)-p\gamma_{r(q)p}}\,ds\,\|u-w\|_{X(T)}\big( \|u\|_{X(T)}^{p-1}+\|w\|_{X(T)}^{p-1}\big),
\end{align*}
where we used~\eqref{eq:fu} with H\"older inequality, and the fact that~$u,w\in X(T)$, to estimate (here we are using~$r(q)p\in[q_0,q_1]$, see Remark~\ref{rem:weak})
\begin{align}
\nonumber
\|(f(u)-f(w))(s,\cdot)\|_{L^{r(q)}}
    & \leq C\, \|(u-w)(s,\cdot)\|_{L^{r(q)p}}\,\big( \|u(s,\cdot)\|_{L^{r(q)p}}^{p-1}+\|w(s,\cdot)\|_{L^{r(q)p}}^{p-1}\big) \\
\label{eq:furq}
   & \leq C\,s^{-p\gamma_{r(q)p}}\,\|u-w\|_{X(T)}\big( \|u\|_{X(T)}^{p-1}+\|w\|_{X(T)}^{p-1}\big).
\end{align}
We now want to prove that
\[ \int_{t_0}^t s^{\delta+1-n\left(\frac1{r(q)}-\frac12\right)-p\gamma_{r(q)p}}\,\,ds \leq C\,t^\delta, \]
for any~$p\geq p_\crit$, that is,
\[ p_\crit\gamma_{r(q)p_\crit} + n\left(\frac1{r(q)}-\frac12\right) \geq 2. \]
Thanks to~\eqref{eq:rqp}, $r(q)p_\crit\leq q$, so that~$n(1-2/(r(q)p_\crit))\leq n(1-2/q)\leq\mu$, and we may replace
\[ p_\crit\,\gamma_{r(q)p_\crit} + n\left(\frac1{r(q)}-\frac12\right) = p_\crit \,n\left(\frac{1}2-\frac1{r(q)p_\crit}\right) +  n\left(\frac1{r(q)}-\frac12\right) = \frac{n(p_\crit-1)}2. \]
From this, we find~$p_\crit=1+4/n$.

On the other hand, if~$\mu\in(1,2)$ and~$n(1-2/q)\geq\mu$, by~\eqref{eq:r2qdecay} we obtain
\begin{align*}
& t^{\delta+\frac\mu2}\,\|(Fu-Fw)(t,\cdot)\|_{L^q}\\
    & \qquad \leq C\,\int_{t_0}^t s^{\delta+1-n\left(\frac1{r(q)}-\frac12\right)-n\left(\frac12-\frac1q\right)+\frac\mu2}\,\| (f(u)-f(w))(s,\cdot) \|_{L^r}\,ds \\
    & \qquad \leq C\,\int_{t_0}^t s^{\delta+1-n\left(\frac1{r(q)}-\frac1q\right)+\frac\mu2-p\gamma_{r(q)p}}\,ds\,\|u-w\|_{X(T)}\big( \|u\|_{X(T)}^{p-1}+\|w\|_{X(T)}^{p-1}\big).
\end{align*}
We now want to prove that
\[ \int_{t_0}^t s^{\delta+1-n\left(\frac1{r(q)}-\frac1q\right)+\frac\mu2-p\gamma_{r(q)p}}\,\,ds \leq C\,t^\delta, \]
for any~$p>p_\crit$, that is,
\[ p_\crit\gamma_{r(q)p_\crit} + n\left(\frac1{r(q)}-\frac1q\right)-\frac\mu2 \geq 2. \]
We distinguish two cases. If~$n(1-2/(r(q)p_\crit))\leq\mu$, as in the previous case, it is sufficient to estimate~$-\mu\geq -n(1/2-1/q)$, and proceed as before. On the other hand, if~$n(1-2/(r(q)p_\crit))\geq \mu$, using~\eqref{eq:rq}, we compute
\[ p_\crit\gamma_{r(q)p_\crit} + n\left(\frac1{r(q)}-\frac1q\right)-\frac\mu2 = (p_\crit-1)\frac\mu2 + \frac{n+1}2 -\frac{n-1}q.  \]
By using~$p_\crit-1=4/n$, $\mu\geq 2n/(n+3)$, and~$q\geq q_0$, so that~$-1/q\geq -1/q_0$, we may estimate
\[ (p_\crit-1)\frac\mu2 + \frac{n+1}2 -\frac{n-1}q \geq \frac4{n+3} +\frac1{q_0} \left(\frac{n+1}2\,q_0-(n-1)\right)= \frac4{n+3} +\frac4{q_0} = 2. \]
Summarizing, we proved that
\[ \|(Fu-Fw)(t,\cdot)\|_{L^q} \leq C(t_0)\,t^{-\gamma_q}\,\|u-w\|_{X(T)}\big( \|u\|_{X(T)}^{p-1}+\|w\|_{X(T)}^{p-1}\big), \]
for any~$q\in[q_0,q_1]$. In the case~$n=3$ and~$\mu=1$, we proceed as before, but we modify~\eqref{eq:normaX2} with
\begin{equation}\label{eq:normaX2mu1}
\|u\|_{X(T)} = \sup \big\{ t^{\frac12}\,(1+\log(t/t_0))^{-1}\,\|u(t,\cdot)\|_{L^3}, \ t^{\gamma_q}\,\|u(t,\cdot)\|_{L^q}: \ q\in(3,4], \quad t\in[t_0,T] \big\} \Bigr).
\end{equation}
This concludes the proof of Theorem~\ref{thm:L2}.
\end{proof}

%%%%%%%%%%%%%%%%%%%%%%%%%%%%%%%%%%%%%

%%%%%%%%%%%%%%%%%%%%%%%%%%%%%%%%%%%

\end{document}